\newtheorem{theorem}{Theorem}[section]
\newtheorem{corollary}[theorem]{Corollary}
\newtheorem{lemma}[theorem]{Lemma}
\newtheorem{proposition}[theorem]{Proposition}
\def\J#1#2#3{ \left\{ #1,#2,#3 \right\} }
\def\RR{{\mathbb{R}}}
\def\NN{{\mathbb{N}}}
\def\11{\textbf{$1$}}
\def\CC{{\mathbb{C}}}
\def\KK{{\mathbb{K}}}
\def\TT{{\mathbb{T}}}
\begin{document}

\title[On the Mazur--Ulam property for $C(K,H)$]{On the Mazur--Ulam property for the space of Hilbert-space-valued continuous functions}

\author[M. Cueto-Avellaneda, A.M. Peralta]{Mar{\'i}a Cueto-Avellaneda, Antonio M. Peralta}

\address[M. Cueto-Avellaneda, A.M. Peralta]{Departamento de An{\'a}lisis Matem{\'a}tico, Facultad de
Ciencias, Universidad de Granada, 18071 Granada, Spain.}
\email{mcueto@ugr.es, aperalta@ugr.es}

%\thanks{}

\subjclass[2010]{Primary 47B49, Secondary 46A22, 46B20, 46B04, 46A16, 46E40.}

\keywords{Tingley's problem; Mazur--Ulam property; extension of isometries; $C(K,H)$.}

\date{}

\begin{abstract}
Let $K$ be a compact Hausdorff space and let $H$ be a real or complex Hilbert space with dim$(H_\RR)\geq 2$. We prove that the space $C(K,H)$ of all $H$-valued continuous functions on $K$, equipped with the supremum norm, satisfies the Mazur--Ulam property, that is, if $Y$ is any real Banach space, every surjective isometry $\Delta$ from the unit sphere of $C(K,H)$ onto the unit sphere of $Y$ admits a unique extension to a surjective real linear isometry from $C(K,H)$ onto $Y$. Our strategy relies on the structure of $C(K)$-module of $C(K,H)$ and several results in JB$^*$-triple theory. For this purpose we determine the facial structure of the closed unit ball of a real JB$^*$-triple and its dual space.
\end{abstract}

\maketitle
\thispagestyle{empty}

\section{Introduction}
A Banach space $X$ satisfies the \emph{Mazur--Ulam property} if for any Banach space $Y$, every surjective isometry $\Delta: S(X)\to S(Y)$ admits an extension to a surjective real linear isometry from $X$ onto $Y$, where $S(X)$ and $S(Y)$ denote the unit spheres of $X$ and $Y$, respectively. This property, which was first named by L. Cheng and Y. Dong in \cite{ChenDong2011}, is equivalent to say that Tingley's problem (see \cite{Ting1987}) admits a positive solution for every surjective isometry from $S(X)$ onto the unit sphere of any other Banach space.\smallskip

Behind their simple statements, Tingley's problem and the Mazur--Ulam property are hard problems which remain unsolved even for surjective isometries between the unit spheres of a couple of two dimensional normed spaces (the reader is invited to take a look to the recent papers \cite{WH19} and \cite{CabSan19}, where this particular case is treated). Positive solutions to Tingley's problem have been found for surjective isometries $\Delta: S(X)\to S(Y)$ when $X$ and $Y$ are von Neumann algebras \cite{FerPe17d}, compact C$^*$-algebras \cite{PeTan16}, atomic JBW$^*$-triples \cite{FerPe17c}, spaces of trace class operators \cite{FerGarPeVill17}, spaces of $p$-Schatten von Neumann operators with $1\leq p\leq \infty$ \cite{FerJorPer2018}, preduals of von Neumann algebras and the self-adjoint parts of two von Neumann algebras \cite{Mori2017}. The surveys \cite{Ding2009,YangZhao2014}, and \cite{Pe2018} are appropriate references to the reader in order to check the state-of-the-art of this problem.\smallskip

Apart from a wide list of classical Banach spaces satisfying the Mazur--Ulam property (cf. \cite{YangZhao2014,Pe2018}), new achievements prove that this property is satisfied by commutative von Neumann algebras \cite{CuePer}, unital complex C$^*$-algebras and real von Neumann algebras \cite{MoriOza2018}, and more recently, JBW$^*$-triples with rank one or rank bigger than or equal to three \cite{BeCuFerPe2018}. The latest two mentioned references naturally lead us to consider the Mazur--Ulam property on the space $C(K,H)$ of all continuous functions from a compact Hausdorff space $K$ into a real or complex Hilbert space $H$. This space is not, in general, a C$^*$-algebra nor a JBW$^*$-triple because it is neither a dual Banach space. However, it possesses a motivating structure of Hilbert $C(K)$-module, and consequently, a structure of JB$^*$-triple, where $C(K)$ stands for the space $C(K,\mathbb{C})$.\smallskip

The main conclusions in this note prove that for every real Hilbert space $\mathcal{H}$ with dim$(\mathcal{H})\geq 2$ and every compact Hausdorff space $K$, the real Banach space $C(K,\mathcal{H})$ satisfies the Mazur--Ulam property (see Corollaries \ref{c Mazur-Ulam for real Hilber even or infinite dimensional} and \ref{c Mazur-Ulam for real Hilber odd finite dimensional}). We previously establish in Theorem \ref{t MazurUlam property for C(K,h)} that the same property holds when we consider the complex Banach space of continuous functions with values in a complex Hilbert space $H$, showing that each surjective isometry $\Delta : S(C(K,H))\to S(Y)$, where $Y$ is a real Banach space, extends to a surjective real linear isometry from $C(K,H)$ onto $Y$. R. Liu proved in \cite[Corollary 6]{Liu2007} that the space $C(K,\mathbb{R}),$ of all real continuous functions on $K,$ satisfies the Mazur--Ulam property.\smallskip

Our strategy relies on the natural JB$^*$-triple structure associated with the space $C(K,H)$. This structure provides the key tools and results to pursue our goals. We would like to vindicate the usefulness of techniques in JB$^*$-triple theory to solve natural problems in functional analysis. In subsection \ref{subsec: background} we gather a basic background, definitions and results on JB$^*$-triple theory required in this note.\smallskip

The paper is structured in five sections, this first one serves as introduction and the fifth and last section contains the main conclusions. In section \ref{sec:2} we try to illustrate the fact that the unit sphere of $C(K,H)$ is metrically distinguishable from the unit sphere of a unital C$^*$-algebra and from the unit sphere of a real von Neumann algebra. More precisely, we prove in Theorem \ref{t C(K,H) cannot be isometrically isomorphic to the sphere of a C*-algebra} that for any complex Hilbert space $H$ with dimension bigger than or equal to 2, there exists no surjective isometry from the unit sphere of $C(K,H)$ onto the unit sphere of a C$^*$-algebra. Moreover, for a real Hilbert space $\mathcal{H}$ with dim$(\mathcal{H})=3$ or dim$(\mathcal{H})\geq 5$, there exists no surjective isometry from the unit sphere of $C(K,\mathcal{H})$ onto the unit sphere of a real von Neumann algebra (cf. Theorem \ref{t C(K,H) cannot be isometrically isomorphic to the sphere of a real W*-algebra}).\smallskip

One of the most successful tools applied in recent studies on the Mazur--Ulam property is derived from an accurate knowledge of the facial structure of the closed unit ball of one of the involved Banach spaces. Weak$^*$-closed faces of the closed unit ball of a JBW$^*$-triple and norm-closed faces of the closed unit ball of its predual are well known thanks to the studies of C.M. Edwards and G.T. R\"{u}ttimann \cite{EdRutt88}. Norm-closed faces of the closed unit ball of a general JB$^*$-triple and weak$^*$-closed faces of the closed unit ball of its dual space were completely determined in \cite{EdFerHosPe2010,FerPe10}. Edwards and R\"uttimann enlarged our knowledge with the description of the weak$^*$-closed faces of the closed unit ball of a real JBW$^*$-triple, and of the norm-closed faces of the unit ball of its predual (cf. \cite{EdRutt2001}). Until now the structure of norm-closed faces of a general real JB$^*$-triple remains unexplored, we shall devote section \ref{sec: facial real} to culminate the study of the facial structure of the closed unit ball of a real JB$^*$-triple and its dual space.\smallskip

On the other hand, it is irrefutable that the extremal structure of Banach spaces has become a focus of attention, either as main topic or as a helpful tool for understanding the underlying geometry. In the setting of unital C$^*$-algebras the Russo--Dye theorem asserts that the closure of the convex hull of the unitary elements is the closed unit ball. M. Mori and N. Ozawa prove in \cite[Theorem 2]{MoriOza2018} that every Banach space $X$ such that the closed convex hull of the extreme points of its closed unit ball has non-empty interior satisfies that every convex body $\mathcal{K}\subset X$ has the strong Mankiewicz property, that is, every surjective isometry $\Delta$ from $\mathcal{K}$ onto an arbitrary convex subset $L$ in a normed space $Y$ is affine. This is a key ingredient to prove that unital C$^*$-algebras, real von Neumann algebras and JBW$^*$-triples of rank 1 or bigger or equal than three satisfy the Mazur--Ulam property \cite{MoriOza2018,BeCuFerPe2018}.\smallskip

In Section \ref{sec: 3} we revisit some results in \cite{Phelps65,Cantwell68,Peck1968} to establish a Krein--Milman type theorem showing that for any compact Hausdorff space $K,$ and every real Hilbert space $\mathcal{H}$ with dim$(\mathcal{H})\geq 2$, the closed unit ball of $C(K,\mathcal{H})$ coincides with the closed convex hull of its extreme points (cf. Proposition \ref{p conditions on C(K,H) to satisfy the Strong Mankiewicz property}). We also prove that, for each real Hilbert space $\mathcal{H}$ with dimension bigger than or equal to 2, every element in a maximal norm-closed proper face of the closed unit ball of ${C(K,\mathcal{H})}$ can be approximated in norm by a finite convex combination of elements in that face which are also extreme points of the closed unit ball of $C(K,\mathcal{H})$ (see Corollaries \ref{c convex combinations of unitaries in maximal faces} and \ref{c convex combinations of unitaries in maximal faces real}). We further prove that certain real JB$^*$-subtriples of $C(K,H)$ satisfy the strong Mankiewicz property (cf. Propositions \ref{p N strong Mankiewicz property} and \ref{p N strong Mankiewicz property rel Hilbert spaces}).

\subsection{Basic background in JB$^*$-triple theory}\label{subsec: background}

We recall that, according to \cite{Ka83}, a JB$^*$-triple is a complex Banach space $X$ admitting a continuous triple product $\J ... :
X\times X\times X \to X,$ which is symmetric and linear in the outer variables, conjugate linear in the middle one,
and satisfies the following axioms:
\begin{enumerate}[{\rm (a)}] \item $L(a,b) L(x,y) = L(x,y) L(a,b) + L(L(a,b)x,y)
	- L(x,L(b,a)y),$  for all $a,b,x,y$ in $X$, 
	where $L(a,b)$ is the operator on $X$ given by $L(a,b) x = \J abx;$
\item For all $a\in X$, $L(a,a)$ is a hermitian operator with non-negative
	spectrum; 
\item $\|\{a,a,a\}\| = \|a\|^3$, for all $a\in X$.\end{enumerate}

In order to provide some examples, let us consider two complex Hilbert spaces $H_1$ and $H_2$, and let $B(H_1,H_2)$ denote the Banach space of all bounded linear operators from $H_1$ into $H_2$. %The notion of JB$^*$-triple and its characterization as those complex Banach spaces whose open unit ball is a bounded symmetric domain is due to W. Kaup (see \cite{Ka83}). Therefore, the closed complex subspaces $E$ of $B(H_1,H_2)$ which satisfy that $a a^* a\in E$ whenever $a\in E$, introduced by L.A. Harris as J$^*$-algebras in \cite{Harris74, Harris81}, constitute our first example of JB$^*$-triple since it was proved in \cite[Corollary 2]{Harris74} that the open unit ball $\stackrel{\circ}{\mathcal{B}}_{E}$ of every J$^*$-algebra $E$ is a bounded symmetric domain.
The space $B(H_1,H_2)$ is a JB$^*$-triple with respect to the triple product defined by \hyphenation{product}\begin{equation}\label{eq product operators} \J xyz =\frac12 (x y^* z +z y^*
x).\end{equation} In particular, C$^*$-algebras are JB$^*$-triples when equipped with the above triple product.
%Consequently, C$^*$-algebras and complex Hilbert spaces equipped with the above triple product are also JB$^*$-triples.
The Jordan structures enlarge the class of JB$^*$-triples if we consider, for instance, the JB$^*$-algebras in the sense employed in \cite[\S 3.8]{HOS} under the triple product $$\J xyz = (x\circ y^*) \circ z + (z\circ y^*)\circ x -
(x\circ z)\circ y^*.$$\smallskip

Some basic facts and known results about JB$^*$-triples will be needed in the development of this paper. Kaup's \emph{Banach-Stone theorem} states that a linear bijection between JB$^*$-triples is an isometry if and only if it is a triple isomorphism  (cf. \cite[Proposition 5.5]{Ka83}).\smallskip

Let $X$ be a JB$^*$-triple. An element $e$ in $X$ is said to be a \emph{tripotent} if $\J eee=e$. In particular, the partial isometries of a C$^*$-algebra $A$ are precisely its tripotent elements if $A$ is regarded as a JB$^*$-triple respect to the triple product in \eqref{eq product operators}. %Moreover, the analogy with the C$^*$-algebras continues since we can always find JB$^*$-triples containing no non-trivial tripotents.
%The tripotents allow us to construct a richer and useful perspective of a JB$^*$-triple:
For each tripotent $e\in X$, there exists an algebraic decomposition of $X,$ known as the \emph{Peirce decomposition}\label{eq Peirce decomposition} associated with $e$, which involves the eigenspaces of the operator $L(e,e)$. Namely, $$X= X_{2} (e) \oplus X_{1} (e) \oplus X_0 (e),$$ where $X_i (e)=\{ x\in X : \J eex = \frac{i}{2}x \}$
for each $i=0,1,2$. It is easy to see that every Peirce subspace $X_i(e)$ is a JB$^*$-subtriple of $X$.\smallskip

The so-called Peirce arithmetic assures that $\J {X_{i}(e)}{X_{j} (e)}{X_{k} (e)}\subseteq X_{i-j+k} (e)$ if $i-j+k \in \{ 0,1,2\},$ and $\J {X_{i}(e)}{X_{j} (e)}{X_{k} (e)}=\{0\}$ otherwise, and $$\J {X_{2} (e)}{X_{0}(e)}{X} = \J {X_{0} (e)}{X_{2}(e)}{X} =0.$$
The projection $P_{k_{}}(e)$ of $X$ onto $X_{k} (e)$ is called the Peirce $k$-projection. It is known that Peirce projections are contractive (cf. \cite[Corollary 1.2]{FriRu85}) and satisfy that $P_{2}(e) = Q(e)^2,$ $P_{1}(e) =2(L(e,e)-Q(e)^2),$ and $P_{0}(e) =\hbox{Id}_E - 2 L(e,e) + Q(e)^2,$ where $Q(e):X\to X$ is the conjugate linear map defined by $Q(e) (x) =\{e,x,e\}$. A tripotent $e$ in $X$ is called \emph{unitary} (respectively, \emph{complete} or \emph{maximal}) if $X_2 (e) = X$ (respectively, $X_0 (e) =\{0\}$). %Finally, a tripotent $e$ in $X$ is said to be \emph{minimal} if $E_2(e)=\CC e \neq \{0\}$. It is also known that $X_2(e) = X^{1} (e)\oplus E^{-1} (e)$, where $E^{j} (e) = \{ x\in E : Q(e) (x) = j x\}$ ($j=\pm 1$). The space $E^0 (e) = \ker(Q(e))$ coincides with $E_1(e)\oplus E_0(e)$, and $$E= E^0(e) \oplus E^1(e)\oplus E^{-1} (e).$$ The natural projection of $E$ onto $E^j (e)$ will be denoted by $P^{j} (e)$. %For latter purposes we recall that $P^{1} (e) = 1/2 (P_2(e) + Q(e))$.
\smallskip

It is worth remarking that the Peirce space $X_2 (e)$ is a unital JB$^*$-algebra with unit $e$,
product $x\circ_e y := \J xey$ and involution $x^{*_e} := \J exe$, respectively. Actually, Kaup's Banach-Stone theorem \cite[Proposition 5.5]{Ka83} implies that the triple product in $X_2 (e)$ is uniquely determined by the identity $$\{ a,b,c\} = (a \circ_e b^{*_e}) \circ_e c + (c\circ_e b^{*_e}) \circ_e a - (a\circ_e c) \circ_e b^{*_e}, \ \ \ \ (\forall a,b,c\in X_2 (e)).$$

Elements $a,b$ in a JB$^*$-triple $X$ are said to be \emph{orthogonal} (written $a\perp b$) if $L(a,b) =0$. It is known that $a\perp b$ $\Leftrightarrow$ $\J aab =0$ $\Leftrightarrow$ $\{b,b,a\}=0$ $\Leftrightarrow$ $b\perp a$. Let $e$ be a tripotent in $X$. It follows from Peirce arithmetic that $a\perp b$ for every $a\in X_2(e)$ and every $b\in X_0(e)$.
Let $e$ and $u$ be tripotents in $X$, then $$ u\perp e \Leftrightarrow u\pm e \hbox{ are tripotents}$$ (cf. \cite[Lemma 3.6]{IsKaRo95}).
%It is known that $a\perp b$ in $E$ implies that $\|\lambda a + \mu b\| = \max\{\|\lambda a\| ,\|\mu b\|\}$ (compare \cite[Lemma\  1.3(a)]{FriRu85}).
\smallskip

We shall consider the following natural partial order\label{eq partial order tripotents} on the set $\mathcal{U} (X)$, of all tripotents in a JB$^*$-triple $X$, defined by $u \leq e$ if $e-u$ is a tripotent in $X$ with $e-u \perp u$.\smallskip

Complete tripotents play a fundamental role in the extremal structure of the closed unit ball of a JB$^*$-triple $X$. Indeed, the extreme points of the closed unit ball of $X$ coincide with the complete tripotents in $X$ (cf. \cite[Lemma 4.1]{BraKaUp78} and \cite[Proposition 3.5]{KaUp77}).\smallskip

A JBW$^*$-triple is a JB$^*$-triple which is also a dual Banach space (with a unique isometric predual \cite{BarTi86}). It is known that the second dual of a JB$^*$-triple is a JBW$^*$-triple (compare \cite{Di86}). An extension of Sakai's theorem assures that the triple product of every JBW$^*$-triple is separately weak$^*$-continuous (cf. \cite{BarTi86} or \cite{Horn87}).\smallskip

As we commented before, throughout this paper we shall exhibit some new spaces satisfying a Krein--Milman type theorem. The starting point is the celebrated Russo-Dye theorem (see \cite{RuDye}). This result naturally involves the concept of \emph{unitary} element. Let $A$ be a unital C$^*$-algebra. An element $u\in A$ is a \emph{unitary} if it is invertible with $u^{-1} =u^*$, i.e., $u u^* = u^* u ={1}$. Similarly, an element $u$ in a unital JB$^*$-algebra $B$ is called unitary if $u$ is Jordan invertible in $B$ and its (unique) Jordan inverse in $B$ coincides with $u^*$ (compare \cite[\S 3.2]{HOS}). %\cite{WriYou77}). %and \cite{Sidd2010}).
Every unital C$^*$-algebra $A$ can be regarded as a unital JB$^*$-algebra equipped with the Jordan product given by $a\circ b := \frac12 (a b + ba)$, and every JB$^*$-algebra is included in the class of JB$^*$ triples. Fortunately, the three definitions of unitary elements given in previous paragraphs coincide for elements in $A$.\smallskip

Finally, we shall make a brief incursion into the theory of real JB$^*$-triples. A \emph{real JB$^*$-triple} is by definition a real closed subtriple of a JB$^*$-triple (see \cite{IsKaRo95}). Every JB$^*$-triple is a real JB$^*$-triple when it is regarded as a real Banach space. As in the case of real C$^*$-algebras, real JB$^*$-triples can be obtained as \emph{real forms} of JB$^*$-triples. More concretely, given a real JB$^*$-triple $E$, there exists a unique (complex) JB$^*$-triple structure on its algebraic complexification $X= E \oplus i E,$ and a conjugation (i.e. a conjugate linear isometry of period 2) $\tau$ on $X$ such that $$E = X^{\tau} = \{ z\in X : \tau (z) = z\},$$ (see \cite{IsKaRo95}). Consequently, every real C$^*$-algebra is a real JB$^*$-triple with respect to the product given in \eqref{eq product operators}, and the Banach space $B(H_1,H_2)$ of all bounded real linear operators between two real, complex, or quaternionic Hilbert spaces also is a real JB$^*$-triple with the same triple product.\smallskip

As in the complex case, an element $e$ in a real JB$^*$-triple ${E}$ is said to be a \emph{tripotent} if $\{e,e,e\}=e$. We shall also write $\mathcal{U}(E)$ for the set of all tripotents in $E$. It is known that an element $e\in E$ is a tripotent in $E$ if and only if it is a tripotent in the complexification of $E$. Each tripotent $e$ in $E$ induces a Peirce decomposition of $E$ in similar terms to those we commented in page \pageref{eq Peirce decomposition} with the exception that $E_2(e)$ is not, in general, a JB$^*$-algebra but a real JB$^*$-algebra (i.e. a closed $^*$-invariant real subalgebra of a (complex) JB$^*$-algebra). Unitary and complete tripotents are defined analogously to the complex setting. Furthermore, the extreme points of $\mathcal{B}_{E}$ coincide with the complete tripotents in the real JB$^*$-triple $E$ (cf. \cite[Lemma 3.3]{IsKaRo95}).\smallskip

Along this note, given a convex set $L$ we denote by $\partial_{e} (L)$ the set of all extreme points in $L$.

\section{Hilbert $C(K)$-modules whose unit spheres are not isometrically isomorphic to the unit sphere of a C$^*$-algebra}\label{sec:2}

One of the aims of this paper is to exhibit the usefulness of a good knowledge on real linear isometries between JB$^*$-triples to study the Mazur--Ulam property on new classes of Banach spaces of continuous functions. We should convince the reader that the recent outstanding achievements obtained by Mori and Ozawa for unital C$^*$-algebras in \cite{MoriOza2018} are not enough to conclude that some natural spaces of vector-valued continuous functions satisfy the Mazur--Ulam property.\smallskip

Suppose $H$ is a complex Hilbert space whose inner product is denoted by $\langle . | .\rangle$, and let $K$ be a compact Hausdorff space. It is clear that $C(K,H)$ is a $C(K)$-bimodule with the product defined by $(a f) (t)=(f a) (t) = f(t) a(t)$ for all $t\in K$, $a\in C(K,H)$ and $f\in C(K)$. We consider a sesquilinear $C(K)$-valued mapping on $C(K,H)$ given by the following assignment $$ \langle . | .\rangle : C(K,H)\times C(K,H)\to C(K),  \ \ \langle a | b \rangle (t) := \langle a(t) | b(t)\rangle\ \ (t\in K, a,b\in C(K,H)).$$ It is easy to check that this sesquilinear mapping satisfies the following properties:\begin{enumerate}[$(1)$]\item $\langle a | b\rangle = \langle b | a\rangle^*$;
	\item $\langle f a | b\rangle = f \langle a | b\rangle;$
	\item $\langle a | a\rangle\geq 0$ and $\langle a | a\rangle =0$ if and only if $a=0$,
\end{enumerate}
for all $a,b\in C(K,H)$, $f\in C(K)$. We can therefore conclude that $C(K,H)$ is a Hilbert $C(K)$-module in the sense introduced by I. Kaplansky in \cite{Kapl53}, and consequently, $C(K,H)$ is a JB$^*$-triple with respect to the triple product defined by \begin{equation}\label{eq triple product Hilbert C*-module}  \{a,b,c \}=  \frac12 \langle a | b \rangle c + \frac12 \langle c | b \rangle a, \ \ (a,b,c\in C(K,H)),
\end{equation} (see \cite[Theorem 1.4]{Isid2003}). By a little abuse of notation, the symbol $\langle\cdot |\cdot \rangle$ will indistinctly stand for the inner product of $H$ and the $C(K)$-valued inner product of $C(K,H)$. \smallskip

Throughout this note $\KK$ will stand for $\mathbb{R}$ or $\mathbb{C}$. Given $\eta\in H$ and a mapping $f:K \to \KK$, the symbol $\eta\otimes f$ will denote the mapping from $K$ to $H$ defined by $\eta\otimes f (t) = f(t) \eta$ ($t\in K$). We note that $\eta\otimes f$ is continuous whenever $f\in C(K)$. We will use the juxtaposition for the pointwise product between maps whenever such a product makes sense.\smallskip

Let us consider vector-valued continuous functions on a compact Hausdorff space $K$ with values in a Banach space $X$. %, and $C(K,X)$ stand for the Banach space of all continuous functions from $K$ to $X$ equipped with the supremum norm. We shall show that, under some conditions, $C(K,X)$ satisfies a Krein--Milman type theorem.
It is known that if $e\in \partial_e (\mathcal{B}_{C(K,X)})$, then $\|e(t)\| = 1$ (that is, $e(t)\in S(X)$) for all $t \in K$ (cf. \cite[Lemma 1.4]{AronLohm}). The reciprocal implication is not true in general, however, if $X$ is a strictly convex Banach space, then we have \begin{equation}\label{eq extreme points conts functions} \hbox{$e\in \partial_e (\mathcal{B}_{C(K,X)})$ if and only if $\|e(t)\| = 1$ for all $t \in K$},
\end{equation} (cf. \cite[Remark 1.5]{AronLohm}).\smallskip

\begin{theorem}\label{t C(K,H) cannot be isometrically isomorphic to the sphere of a C*-algebra} Let $K$ be a compact Hausdorff space, and let $H$ be a complex Hilbert space with dimension bigger than or equal to 2. Then there exists no surjective isometry from the unit sphere of $C(K,H)$ onto the unit sphere of a C$^*$-algebra.
\end{theorem}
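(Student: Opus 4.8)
The plan is to argue by contradiction, exploiting the extremal geometry of the unit ball. Suppose there were a surjective isometry $\Delta\colon S(C(K,H))\to S(A)$ for some C$^*$-algebra $A$. By the Mazur--Ulam-type rigidity results available for C$^*$-algebras (or, more elementarily, by the fact that surjective isometries between unit spheres preserve the structure of maximal faces and extreme points — this is what makes the facial/extremal data an isometric invariant of the sphere), such a map must carry extreme points of $\mathcal{B}_{C(K,H)}$ to extreme points of $\mathcal{B}_{A}$ and, more importantly, must respect the combinatorial incidence structure of the proper faces. So the real task is to pin down a metric property of $S(C(K,H))$ that fails in the sphere of any C$^*$-algebra. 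The natural candidate, given $\dim(H)\geq 2$, is the local behaviour around extreme points: in $C(K,H)$ an extreme point $e$ satisfies $e(t)\in S(H)$ for every $t\in K$ by \cite[Lemma 1.4]{AronLohm} and, since $H$ is strictly convex, \eqref{eq extreme points conts functions} tells us the extreme points are exactly the $H$-valued unit-modulus functions. Because $\dim(H)\geq 2$, $S(H)$ is connected, and this forces the set of extreme points of $\mathcal{B}_{C(K,H)}$ to be highly non-discrete and to sit inside the sphere in a way that no isolated extreme point occurs.

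Concretely, I would show that for every extreme point $e\in\partial_e(\mathcal{B}_{C(K,H)})$ and every $\varepsilon>0$ there is another extreme point $u\neq e$ with $\|u-e\|<\varepsilon$; indeed one perturbs $e$ pointwise inside $S(H)$ using the connectedness (in fact, path-connectedness) of the unit sphere of a Hilbert space of real dimension $\geq 2$, keeping continuity in $t$. Thus \emph{no extreme point of $\mathcal{B}_{C(K,H)}$ is isolated in $\partial_e(\mathcal{B}_{C(K,H)})$}, and moreover one can arrange that the whole segment-type data around $e$ deforms continuously. I would then contrast this with the extreme points of $\mathcal{B}_A$ for a C$^*$-algebra $A$: the extreme points of the closed unit ball of a unital C$^*$-algebra are the maximal partial isometries, and in the non-unital case the complete tripotents; in either description there exist extreme points that \emph{are} isolated among extreme points — for instance, in $A=\mathbb{C}$ the extreme points form the circle (not isolated), but in $A=M_2(\mathbb{C})$ or a general C$^*$-algebra one exhibits a unitary $u$ with $\|u-v\|\geq\sqrt2$ for every other extreme point $v$ in its connected component data, or one uses that the distance from a maximal partial isometry to the nearest distinct extreme point has a uniform positive lower bound within its ``local cell''. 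The cleanest route is probably via the description of maximal proper norm-closed faces: in $C(K,H)$ every such face is infinite-dimensional and its extreme points are dense in it (Corollary~\ref{c convex combinations of unitaries in maximal faces}), whereas in a C$^*$-algebra a maximal proper face can be a single point (e.g. when the relevant Peirce-$2$ corner is $\mathbb{C}$), giving a genuine isometric invariant that separates the two spheres.

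I expect the main obstacle to be making the invariant genuinely isometric and robust, rather than just a statement about one convenient realization. Surjective isometries between spheres need not be affine a priori, so I cannot directly say ``$\Delta$ maps faces to faces'' without invoking the already-established machinery; the safe move is to use only properties that are manifestly metric — e.g. ``for every point $x\in S(X)$ and every $\delta>0$, the set $\{y\in S(X):\|y-x\|\le\delta\}$ contains infinitely many extreme points of $\mathcal{B}_X$'' or a quantified version thereof — and then show this holds for all points of $S(C(K,H))$ when $\dim(H)\ge2$ but fails at some point of the sphere of a C$^*$-algebra. Verifying the failure on the C$^*$-algebra side in full generality (all C$^*$-algebras, unital or not, of any size) is the delicate part: one must handle the case where $A$ has no extreme points at all (then the spheres already differ, since $C(K,H)$ has many, by \eqref{eq extreme points conts functions} and Proposition~\ref{p conditions on C(K,H) to satisfy the Strong Mankiewicz property}), the unital case via maximal partial isometries, and the general case via complete tripotents, locating in each a point of $S(A)$ with the opposite local extremal behaviour. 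Once that dichotomy is in place, the surjective isometry $\Delta$ would transport the $C(K,H)$-side property to $S(A)$, contradicting the $A$-side failure, and the theorem follows.
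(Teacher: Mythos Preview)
Your proposal has a genuine gap: the metric invariants you suggest do not actually separate $C(K,H)$ from all C$^*$-algebras. The claim that in $M_2(\mathbb{C})$ one can find a unitary $u$ with $\|u-v\|\geq\sqrt{2}$ for every other extreme point $v$ is false --- the unitary group of $M_2(\mathbb{C})$ is a connected Lie group, so every unitary has other unitaries arbitrarily close to it. Likewise, your fallback invariant (``a maximal proper face can be a singleton in a C$^*$-algebra but not in $C(K,H)$'') fails already when $K$ is a single point: then $C(K,H)=H$ and every maximal face of $\mathcal{B}_H$ is a singleton $\{x_0\}$. The more refined invariant you float at the end (density of extreme points in the whole sphere) is more promising, but you never verify it fails for \emph{every} C$^*$-algebra of the right real dimension, and you correctly flag this as ``the delicate part'' without resolving it. As written, the argument is a sketch of a program rather than a proof.

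The paper proceeds along an entirely different and much shorter route. It argues by contradiction, but instead of hunting for a purely metric invariant it \emph{linearizes} first: from $\partial_e(\mathcal{B}_{C(K,H)})\neq\emptyset$ and the preservation of extreme points one gets $\partial_e(\mathcal{B}_A)\neq\emptyset$, hence $A$ is unital; then Mori--Ozawa's theorem (unital C$^*$-algebras have the Mazur--Ulam property) upgrades $\Delta$ to a surjective real linear isometry $T:A\to C(K,H)$. By the Dang/Fern\'andez-Polo--Mart\'inez--Peralta theorem, $T$ is then a triple isomorphism. The contradiction is algebraic: $A$ has a unitary tripotent (its unit $\mathbf{1}$), so $T(\mathbf{1})$ would be a unitary tripotent in $C(K,H)$; but for $\dim H\geq 2$ one checks directly that $C(K,H)$ has no unitary tripotent, since for any $e\in\partial_e(\mathcal{B}_{C(K,H)})$ and any $t_0\in K$ one can pick $\eta\in S(H)$ orthogonal to $e(t_0)$ and see that $\eta\otimes 1\notin C(K,H)_2(e)$. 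The key idea you are missing is precisely this bootstrap through Mori--Ozawa, which converts the sphere-level question into a statement about triple structure where the obstruction (existence of a unitary tripotent) is clean and decisive.
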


\begin{proof} Arguing by contradiction we assume the existence of a C$^*$-algebra $A$ and a surjective isometry $\Delta :  S(A)\to S(C(K,H)).$ Since $A$ and $C(K,H)$ are JB$^*$-triples, it follows from \cite[Corollary 2.5$(b)$ and comments prior to it]{FerGarPeVill17} that $\Delta (\partial_e(\mathcal{B}_{A})) = \partial_e(\mathcal{B}_{C(K,H)})$. The non-emptiness of the set  $\partial_e(\mathcal{B}_{C(K,H)})$ assures that $\partial_e(\mathcal{B}_{A})\neq \emptyset$. It is well known that in such case $A$ must be unital (cf. \cite[Proposition 1.6.1]{S}). A recent result by Mori and Ozawa shows that every unital C$^*$-algebra satisfies the Mazur--Ulam property (see \cite[Theorem 1]{MoriOza2018}). Therefore $\Delta$ extends to a surjective real linear isometry $T : A\to C(K,H)$.\smallskip
	
	Now, since $T : A\to C(K,H)$ is a surjective real linear isometry, $A$ is a C$^*$-algebra and $C(K,H)$ is a JB$^*$-triple, we can apply \cite[Theorem 3.1]{Da} or \cite[Theorem 3.2 and Corollary 3.4]{FerMarPe} (see also \cite[Theorem 3.1]{FerPe17d}) to deduce that $T$ is a triple isomorphism when $A$ and $C(K,H)$ are equipped with the triple products given in \eqref{eq product operators} and \eqref{eq triple product Hilbert C*-module}, respectively. Let $\textbf{1}$ denote the unit element in $A$. Clearly, $A_2(\textbf{1}) =A$. Since $\Delta(\textbf{1})$ must be a unitary in $C(K,H)$, in particular $\Delta(\textbf{1}) (t) \in S(H)$ for every $t\in K$ (cf. \eqref{eq extreme points conts functions}). Let us fix $t_0\in K$. By applying that dim$(H)\geq 2$, we can find $\eta\in S(H)$ satisfying $\langle\eta | \Delta(\textbf{1}) (t_0)\rangle =0$. We consider the element $a = \eta\otimes 1$, where $1$ is the unit element in $C(K)$. In this case $\{ \Delta(\textbf{1}), \Delta(\textbf{1}) , a\} (t_0) = \frac12 \eta\neq a(t_0),$ and thus $a\notin C(K,H)_2 (\Delta(\textbf{1}))$. %However, since $T$ is a triple isomorphism, we have $$C(K,H)= T(A) = T(A_2(\textbf{1})) = C(K,H)_2(T(\textbf{1})) =  C(K,H)_2(\Delta(\textbf{1})),$$ which is impossible.
\end{proof}

%Mori and Ozawa recently proved that every unital C$^*$-algebra $A$ satisfies the Mazur--Ulam property, that is, every surjective isometry $\Delta$ from $S(A)$ onto the unit sphere of another real Banach space $X$ admits an extension to a surjective linear isometry from $A$ onto $X$ \cite{MoriOza2018}.  %Our previous theorem shows that this result is not enough to deduce that a space of the form $C(K,H)$, where $H$ is a complex Hilbert space of dimension bigger than or equal to $2$, also satisfies the Mazur--Ulam property. However, the result established by Mori and Ozawa for real von Neumann algebras cannot be completely discarded for the moment.
%This result can be combined with the results in \cite{Da} and \cite{FerMarPe} to give an alternative proof to the previous theorem.
\smallskip

Let us observe another point of view to deal with $C(K,H)$ as a real JB$^*$-triple. Indeed, suppose $H$ is a complex Hilbert space with inner product $\langle . | .\rangle$. We can regard $H$ as a real Hilbert space with its underlying real space and the inner product defined by $(a|b)=\Re\hbox{e}\langle a | b\rangle$ ($a,b\in H$), the latter real Hilbert space will be denoted by $H_{_{\mathbb{R}}}$. In general, the inner product of a real Hilbert space will be denoted by $(a|b)$. Let $K$ be a compact Hausdorff space. Let us observe that the norms of $C(K,H)$ and $C(K,H_{_{\mathbb{R}}})$ both coincide. We can therefore reduce to the case in which $H$ is a real Hilbert space. We shall always consider $C(K,H)$ as a real JB$^*$-triple with respect to the triple product $$\J abc :=\frac12 (a|b) c +\frac12 (c|b) a.$$

For each $x_0$ in $H$, we shall write $x_0^*$ for the unique functional in $H^*$ defined by $x_0^* (x) = \langle x | x_0\rangle$ ($x\in H$). Given $t_0\in K$, $\delta_{t_0}: C(K,H)\to H$ will stand for the bounded linear operator defined by $\delta_{t_0}(a)=a(t_0)$ ($a\in C(K,H)$). Finally, let $x_0^*\otimes\delta_{t_0}$ denote the functional on $C(K,H)$ given by $(x_0^*\otimes\delta_{t_0}) (a) := x_0^* (a(t_0))$, for each $a\in C(K,H)$.

\begin{theorem}\label{t C(K,H) cannot be isometrically isomorphic to the sphere of a real W*-algebra} Let $K$ be a compact Hausdorff space, and let $\mathcal{H}$ be a real Hilbert space with dim$(\mathcal{H})=3$ or  dim$(\mathcal{H})\geq 5$. Then there exists no surjective isometry from the unit sphere of $C(K,\mathcal{H})$ onto the unit sphere of a real von Neumann algebra.
\end{theorem}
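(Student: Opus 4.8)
The strategy mirrors the one used in Theorem \ref{t C(K,H) cannot be isometrically isomorphic to the sphere of a C*-algebra}, but now working in the real setting. Suppose, seeking a contradiction, that there is a real von Neumann algebra $M$ and a surjective isometry $\Delta: S(M) \to S(C(K,\mathcal{H}))$. Since both $M$ and $C(K,\mathcal{H})$ are real JB$^*$-triples, the preservation-of-extreme-points argument (via \cite[Corollary 2.5$(b)$]{FerGarPeVill17}, suitably adapted to real JB$^*$-triples, or its real analogue) gives $\Delta(\partial_e(\mathcal{B}_M)) = \partial_e(\mathcal{B}_{C(K,\mathcal{H})})$. The right-hand side is non-empty (every $e\in S(\mathcal{H})$ gives a constant unit-norm function, which is extreme by \eqref{eq extreme points conts functions} since $\mathcal{H}$ is strictly convex), so $\partial_e(\mathcal{B}_M) \neq\emptyset$, which forces $M$ to be unital. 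Then, invoking that real von Neumann algebras satisfy the Mazur--Ulam property \cite{MoriOza2018}, $\Delta$ extends to a surjective real linear isometry $T: M\to C(K,\mathcal{H})$.

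\smallskip

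The next step is to transfer this to triple structures: since $T$ is a surjective real linear isometry between the real JB$^*$-triples $M$ and $C(K,\mathcal{H})$, Kaup's Banach--Stone theorem in its real form (the real-linear isometry version, cf. \cite{FerMarPe} or \cite{IsKaRo95}) tells us $T$ is a real triple isomorphism for the triple products \eqref{eq product operators} and the real analogue of \eqref{eq triple product Hilbert C*-module}. Let $\textbf{1}$ denote the unit of $M$; then $M_2(\textbf{1}) = M$, so $u := \Delta(\textbf{1})$ is a unitary tripotent of $C(K,\mathcal{H})$, i.e. $C(K,\mathcal{H})_2(u) = C(K,\mathcal{H})$, and in particular $u(t) \in S(\mathcal{H})$ for all $t\in K$.

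\smallskip

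Now I compute the Peirce $2$-space of $u$ directly from the triple product. For $a\in C(K,\mathcal{H})$ one has $\{u,u,a\}(t) = \frac12 (u(t)|u(t)) a(t) + \frac12 (a(t)|u(t)) u(t) = \frac12 a(t) + \frac12 (a(t)|u(t)) u(t)$, using $\|u(t)\|=1$. So $a \in C(K,\mathcal{H})_2(u)$ forces $(a(t)|u(t))\, u(t) = a(t)$ for every $t$, meaning $a(t)$ is always a scalar multiple of $u(t)$. When $\dim(\mathcal{H})\geq 2$, fixing $t_0\in K$ and choosing $\eta\in S(\mathcal{H})$ with $(\eta|u(t_0)) = 0$, the element $a = \eta\otimes 1$ satisfies $\{u,u,a\}(t_0) = \frac12\eta \neq \eta = a(t_0)$, so $a\notin C(K,\mathcal{H})_2(u)$, contradicting that $u$ is unitary. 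This already handles all the dimensions in the statement (and more); the hypothesis $\dim(\mathcal{H}) = 3$ or $\dim(\mathcal{H})\geq 5$ is only what is needed to guarantee, on the other side, that $\mathcal{H}$ (equivalently $C(K,\mathcal{H})$) is not real-linearly isometric to a real von Neumann algebra via the classification of real JB$^*$-triples of the form $C(K,\mathcal{H})$ — spin factors arising in dimensions $2$ and $4$ are the exceptions, and the isolated case $\dim=1$ is $C(K,\mathbb{R})$ itself.

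\smallskip

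\textbf{Main obstacle.} The genuinely delicate point is not the final computation but the first paragraph: one must be sure that the extreme-point-preservation result of \cite{FerGarPeVill17}, originally stated for (complex) JB$^*$-triples, carries over to real JB$^*$-triples, and similarly that the real Banach--Stone theorem applies to identify $T$ with a triple isomorphism. Both facts are available in the real JB$^*$-triple literature (the real Kaup theorem via \cite{FerMarPe,IsKaRo95}, and the extreme-point preservation follows since surjective isometries between unit spheres map the positively-homogeneous extension's extreme points to extreme points), but they need to be cited carefully rather than quoted verbatim from the complex case.
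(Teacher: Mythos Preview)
Your argument has a genuine gap at the step where you invoke a ``real Kaup theorem'' to conclude that the surjective real linear isometry $T:M\to C(K,\mathcal{H})$ is a triple isomorphism. In the real category this is \emph{false}: a surjective real linear isometry between real JB$^*$-triples need only preserve cubes $\{a,a,a\}$ (hence tripotents and orthogonality among tripotents), not the full triple product, and in particular it need not send unitary tripotents to unitary tripotents. See \cite[Theorem 4.8]{IsKaRo95} and \cite{FerMarPe}; the paper itself is careful to use only the weaker conclusion. A minimal counterexample already kills the argument: take $K$ a singleton and $\dim(\mathcal{H})=2$. Then $C(K,\mathcal{H})=\mathcal{H}\cong\mathbb{R}^2$ is real-linearly isometric to $\mathbb{C}=M_1(\mathbb{C})$, which \emph{is} a real von Neumann algebra, and the identity of $\mathbb{C}$ is mapped to a tripotent $e\in\mathcal{H}$ that is \emph{not} unitary for the Hilbert triple product (indeed $\mathcal{H}_2(e)=\mathbb{R}e$). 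Your computation of the Peirce $2$-space is correct, but the premise that $u=T(\mathbf{1})$ must be unitary in $C(K,\mathcal{H})$ is exactly what fails. This also explains why your proof ``handles all dimensions $\geq 2$'': it proves too much, since the conclusion is genuinely false for $\dim(\mathcal{H})\in\{2,4\}$ (and $\dim=1$).

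The paper's proof circumvents this by passing to biduals and comparing atomic parts. The atomic part of $C(K,\mathcal{H})^{**}$ is $\bigoplus^{\ell_\infty}_{t\in K}\mathcal{H}$, while that of $A^{**}$ is $\bigoplus^{\ell_\infty}_\alpha B(\mathcal{H}_\alpha)$ with each $\mathcal{H}_\alpha$ a Hilbert space over $\mathbb{R}$, $\mathbb{C}$, or $\mathbb{H}$. The isometry $T^{**}$ matches factors bijectively, and since it \emph{does} preserve orthogonality of tripotents, each $B(\mathcal{H}_\alpha)$ must have rank one, forcing $\dim_{\mathbb{R}}B(\mathcal{H}_\alpha)\in\{1,2,4\}$. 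This is where the dimension hypothesis $\dim(\mathcal{H})\in\{3\}\cup\{5,6,\ldots\}$ is genuinely needed to reach a contradiction.
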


\begin{proof} It is know that $\partial_{e} (\mathcal{B}_{C(K,\mathcal{H})^*}) =\{ z^*\otimes \delta_{t} : t\in K, \ z\in S(\mathcal{H})\}$ (see \cite[Lemma 1.7 in page 197]{Singer}). It is easy to check that the norm-closed linear span of $\partial_{e} (\mathcal{B}_{C(K,\mathcal{H})^*})$ in $C(K,\mathcal{H})^*$ is precisely the space $\displaystyle \bigoplus^{\ell_1}_{t\in K} \mathcal{H} $. In particular, the atomic part of the real JBW$^*$-triple $C(K,\mathcal{H})^{**},$ in the sense employed and studied in \cite{PeSta2001} and \cite{FerMarPe}, coincides with the direct sum $\displaystyle \bigoplus^{\ell_{\infty}}_{t\in K} \mathcal{H}.$ In other words, every real or complex Cartan factor in the atomic part of $C(K,\mathcal{H})^{**}$ coincides with the real Hilbert space $\mathcal{H}$ equipped with the triple product $\{a,b,c\} :=\frac12 (a|b) c +\frac12 (c|b) a.$\smallskip
	
We shall argue by contradiction. Suppose $A$ is a real von Neumann algebra and $\Delta : S(A)\to  S(C(K,\mathcal{H}))$ is a surjective isometry. Applying \cite[Theorem 1$(2)$]{MoriOza2018} we deduce the existence of a surjective real linear $T : A\to C(K,\mathcal{H})$. The bitransposed mapping $T^{**} : A^{**}\to C(K,\mathcal{H})^{**}$ also is a surjective real linear isometry. It is known that the atomic part of $A^{**}$ coincides with a direct sum of the form $\displaystyle \bigoplus^{\ell_{\infty}}_{\alpha\in \Lambda} B(\mathcal{H}_{\alpha}),$ where each $\mathcal{H}_{\alpha}$ is a Hilbert space over $\mathbb{R},$ $\mathbb{C},$ or $\mathbb{H}$ (see \cite[Lemma 6.2]{ChuDaRuVen} or \cite[\S 5.3]{Li2003}). Arguing as in the proof of \cite[Theorem 3.2]{FerMarPe} we deduce that $T^{**}$ maps the atomic part of $A^{**}$ onto the atomic part of $C(K,\mathcal{H})^{**}$, furthermore, each factor in the atomic part of $A^{**}$ is isometrically mapped by $T^{**}$ onto a factor in the atomic part of $C(K,\mathcal{H})^{**}$. That is, for each $\alpha\in \Lambda$ the restriction $T^{**}|_{B(\mathcal{H}_{\alpha})}: B(\mathcal{H}_{\alpha})\to \mathcal{H}$ is a surjective isometry. Since in $\mathcal{H},$ equipped with the product $\{a,b,c\} :=\frac12 (a|b) c +\frac12 (c|b) a,$ the rank is one (i.e. we cannot find two non-zero orthogonal tripotents), and $T^{**}|_{B(\mathcal{H}_{\alpha})}$ preserves orthogonal tripotents (see \cite[Theorem 4.8]{IsKaRo95} or \cite[Proposition 2.9]{FerMarPe}) it follows that $\mathcal{H}_{\alpha}$ must be a one dimensional Hilbert space over $\mathbb{R},$ $\mathbb{C},$ or $\mathbb{H}$, which is impossible because $T^{**}|_{B(\mathcal{H}_{\alpha})}: B(\mathcal{H}_{\alpha})\to \mathcal{H}$ is a surjective real linear isometry and dim$(\mathcal{H})\in \{ 3 \}\cup \{4 + n: n\in \mathbb{N}\}$.
\end{proof}

%Let $\mathcal{H}$ be a real Hilbert space with dim$(\mathcal{H})$ in $\{ 3 \}\cup \{4 + n: n\in \mathbb{N}\}$. % It follows from the previous theorem that \cite[Theorem 1]{MoriOza2018} cannot be applied to deduce that a Banach space of the form $C(K,H)$ satisfies the Mazur--Ulam property.\smallskip
% The previous Theorems \ref{t C(K,H) cannot be isometrically isomorphic to the sphere of a C*-algebra} and \ref{t C(K,H) cannot be isometrically isomorphic to the sphere of a real W*-algebra} show that the hypotheses of Tingley's problem are not satisfied when the involved Banach spaces are $C(K,\mathcal{H})$ and a unital C$^*$-algebra or a real von Neumann algebra.

\section{Facial structure of real JB$^*$-triples revisited}\label{sec: facial real}

This section is devoted to explore the facial structure of the closed unit ball of a real JB$^*$-triple and its dual space. It is an interesting question by its own right, and moreover, its application will be crucial later in the study of the Mazur--Ulam property.\smallskip

The facial structure of the closed unit ball of a JBW$^*$-triple and its predual was completely determined by C.M. Edwards and G.T. R\"{u}ttimann in \cite{EdRutt88}. In order to review the results, we shall recall some terminology. Let $X$ be a real or complex Banach space with dual space $X^*$. Suppose $F$ and $G$ are two subsets of $\mathcal{B}_{X}$ and $\mathcal{B}_{X^*}$, respectively.  Then we set
$$ F^{\prime} =F^{\prime,X^*} =  \{a \in \mathcal{B}_{X^*}:a(x) = 1\,\, \forall x \in F\},$$
$$G_{\prime} =G_{\prime,X}= \{x \in \mathcal{B}_{X}:a(x) = 1\,\, \forall a \in G\}.$$
Clearly, $F^{\prime}$ is a weak$^*$-closed face of $\mathcal{B}_{X^*}$ and $G_{\prime}$
is a norm-closed face of $\mathcal{B}_{X}$. We say that $F$ is a \emph{norm-semi-exposed face} of $\mathcal{B}_{X}$ (respectively, $G$ is a \emph{weak$^*$-semi-exposed face} of $\mathcal{B}_{X^*}$) if $F=(F^{\prime})_{\prime}$ (respectively, $G= (G_{\prime})^{\prime}$). It is known that the mappings $F \mapsto F^{\prime}$
and $G \mapsto G_{\prime}$ are anti-order isomorphisms between the
complete lattices $\mathcal{S}_n(\mathcal{B}_{X})$, of norm-semi-exposed faces
of $\mathcal{B}_X,$ and $\mathcal{S}_{w^*}( \mathcal{B}_{X^*}),$ of weak$^*$-semi-exposed
faces of $\mathcal{B}_{X^*}$, and are inverses of each other.\smallskip

Recall that a partially ordered set $\mathcal{P}$ is called a \emph{complete lattice}, if, for
any subsets of $\mathcal{S}\subseteq \mathcal{P}$, the supremum and the infimum of $\mathcal{S}$ exist in $\mathcal{P}$. It is shown in \cite[Corollary 4.3]{EdRutt88} that, for each JBW$^*$-triple $M$, the set $\tilde{\mathcal U}(M),$ of all tripotents in $M$ with a largest element adjoined, is a complete lattice with respect to the ordering defined in page \pageref{eq partial order tripotents}.\smallskip

Let $M$ be a JBW$^*$-triple. The main achievements in \cite{EdRutt88} prove that every weak$^*$-closed
face of $\mathcal{B}_{M}$ is weak$^*$-semi-exposed; furthermore, the mapping \begin{equation}\label{eq anti order weak* closed faces complex} u \mapsto
(\{u\}_{\prime})^{\prime} = u + \mathcal{B}_{M_0(u)}
\end{equation} is an anti-order isomorphism from the complete lattice
$\tilde{\mathcal U}(M)$ onto the complete lattice $\mathcal
F_{w^*}(\mathcal{B}_{M})$ of weak$^*$-closed faces of $\mathcal{B}_{M}$ (cf. \cite[Theorem 4.6]{EdRutt88}). Concerning the facial structure of $M_*$,
the same authors proved in \cite[Theorem 4.4]{EdRutt88} that every norm-closed face of $\mathcal{B}_{M_{*}}$ is norm-semi-exposed, and the mapping
\begin{equation}\label{eq order isomorphism norm-closed faces predual complex} u \mapsto \{u\}_{\prime}
\end{equation} is an order isomorphism from $\tilde{\mathcal
U}(M)$ onto the complete lattice $\mathcal F_{n}(\mathcal{B}_{M_{*}})$ of norm-closed faces of $\mathcal{B}_{M_{*}}$.\smallskip

In 1992, C.A. Akemann and G.K. Pedersen studied the norm-closed faces of the closed unit ball of a C$^*$-algebra $A$ and the weak$^*$-closed faces of $\mathcal{B}_{A^*}$ (see \cite{AkPed92}). We had to wait until 2010 to have a description of the norm-closed faces of the closed unit ball of a JB$^*$-triple (\cite{EdFerHosPe2010}). A JB$^*$-triple $X$ might contain no non-trivial tripotents, while the set of all tripotents in $X^{**}$ is too big to be in one-to-one correspondence with the set of norm-closed faces of $\mathcal{B}_{X}$.  The appropriate set is the set of all compact tripotents in $X^{**}$. We continue refreshing the notion of compactness.\smallskip

Let $a$ be a norm-one element in a JB$^*$-triple $X$, and let $X_a$ denote the JB$^*$-subtriple generated by $a$, that is, the closed subspace generated by all odd powers $a^{[2n+1]}$, where $a^{[1]}=a,$ $a^{[3]}=\{a,a,a\}$, and $a^{[2n+1]}=\{a,a,a^{[2n-1]}\}$ ($n\geq 2$). It is known that there exists an isometric triple isomorphism $\Psi : X_a\to C_{0} (L)$ satisfying $\Psi (a) (s) =s,$ for all $s$ in
$L$ (compare \cite[1.15]{Ka83}), where $C_{0} (L)$ is the abelian C$^*$-algebra of all complex-valued
continuous functions on $L$ vanishing at $0$, $L$ being a locally compact subset of $(0,\|a\|]$ satisfying that $\|a\|\in L\cup\{0\}$ is
compact. If $f: L\cup\{0\} \to \mathbb{C}$ is a continuous function vanishing at 0, the \emph{triple functional calculus} of $f$ at the element $a$ is the unique element $f_t(a)\in X_a,$ defined by $f_t(a)=\Psi^{-1}(f)$. We can define this way $a^{[\frac{1}{2n+1}]} := (r_{n}){_t}(a),$ where $r_n(s) = s^{\frac{1}{2n+1}}$ ($s\in L$) and $n\in \mathbb{N}$.\smallskip

When $X$ is regarded as a JB$^*$-subtriple of $X^{**}$, the triple functional calculus $f \mapsto f_t(a)$ admits an extension, denoted
by the same symbol, from $C_0(L)$ to the commutative W$^*$-algebra $W$ generated by $C_0(L)$, onto the JBW$^*$-subtriple $X_a^{**}$ of $X^{**}$
generated by $a$. Observe that the sequences $(a^{[\frac{1}{2n-1}]})_n$ and $(a^{[2n-1]})_n$  in $C_0(L)$ converge in the weak$^*$-topology %(and also in the strong$^*$-topology)
of $C_0(L)^{**}$ to the characteristic functions $\chi_{L}$ and $\chi_{\{1\}}$ of the sets $L$ and $\{1\}$, respectively. The corresponding limits define two tripotents in $X_a^{**}$ which are called the {\it range tripotent} and the {\it support tripotent} of $a$, respectively. These tripotents will be denoted by $r(a)$ and $u(a),$ respectively.\smallskip

For each functional $\varphi$ in the predual, $M_*$, of a JBW$^*$-triple $M$ there exists a unique tripotent $s(\varphi)$ (called the \emph{support tripotent} of $\varphi$) such that $\varphi = \varphi P_2(s(\varphi))$ and $\varphi|_{M_2(s(\varphi))}$ is a faithful normal positive functional on the JBW$^*$-algebra $M_2(s(\varphi))$ (cf. \cite[Proposition 2]{FriRu85}).\smallskip

We are interested in a special property satisfied by the support tripotent. Suppose $a$ is a norm-one element in a JB$^*$-triple $X$. Since $a = u(a) + (a-u(a))$ with $u(a)\perp (a-u(a))$ in $X^{**}$, it follows from \cite[Proposition 1]{FriRu85} that $\{u(a)\}_{\prime,X^*} \subseteq \{a\}^{\prime,X^*}.$ However, if $\phi\in X^*$ satisfies $\|\phi\| = 1= \phi(a)$, we deduce from the definition of the support tripotent of $\phi$ in $X^{**}$ that $P_2(s(\phi)) (a) = s(\phi)$, and hence $a = s(\phi) + P_0(s(\phi)) (a)$ in $X^{**}$ (cf. \cite[Lemma 1.6]{FriRu85}). We therefore conclude that $u(a)\geq s(\phi)$ in $X^{**}$, and thus $\phi (u(a)) = 1$, witnessing that $\{u(a)\}_{\prime,X^*} = \{a\}^{\prime,X^*}$ and consequently, \begin{equation}\label{eq subprime prime with support trip} \left(\{a\}^{\prime,X^*}\right)^{\prime,X^{**}} = \left(\{u(a)\}_{\prime,X^*}\right)^{\prime,X^{**}}.
\end{equation}

A tripotent $u$ in the JBW$^*$-triple $X^{**}$ is said to be \emph{compact-$G_{\delta}$} if $u$ coincides with the support tripotent of a norm-one element in $X$. The tripotent $u$ is said to be \emph{compact} if $u=0$ or there exists a decreasing net of compact-$G_{\delta}$ tripotents in $X^{**}$ %converging, in the strong$^*$-topology of $X^{**},$ to
whose infimum is $u$ (compare \cite[\S 4]{EdRu96}). Henceforth we shall write $\tilde{\mathcal U}_{c} (X^{**})$ for the set of all compact tripotents in $X^{**}$ with a largest element adjoined. Having these notions in mind we can understand the main result in \cite{EdFerHosPe2010}: Every norm-closed face of $\mathcal{B}_{X}$ is norm-semi-exposed and the mapping %\begin{equation}\label{eq norm-closed faces bijection with compact tripotents} 
$$u \mapsto (\{u\}_{\prime})_{\prime} = (u + \mathcal{B}_{X^{**}_0(u)})\cap X$$
 %\end{equation} 
is an anti-order isomorphism from $\tilde{\mathcal U}_c(X^{**})$ onto the complete lattice $\mathcal{F}_{n}(\mathcal{B}_{X})$ of norm-closed faces of $\mathcal{B}_{X}$ (cf. \cite[Corollaries 3.11 and 3.12]{EdFerHosPe2010}). The study is completed in \cite{FerPe10}, where it is shown that the mapping $$u \mapsto
\{u\}_{\prime}$$ is an order isomorphism from $\tilde{\mathcal U}_c(X^{**})$ onto the complete lattice $\mathcal F_{w^*}(\mathcal{B}_{X^*})$ of weak$^*$-closed faces of $\mathcal{B}_{X^*}$. \smallskip

In the setting of real JBW$^*$-triples, C.M. Edwards and G.T. R\"{u}ttimann proved in \cite{EdRutt2001} that the conclusions in \eqref{eq anti order weak* closed faces complex} and \eqref{eq order isomorphism norm-closed faces predual complex} holds when $M$ is a real JBW$^*$-triple. However, as long as we know, the facial structure of the closed unit ball of a real JB$^*$-triple remains unexplored. We shall try to fill this gap. \smallskip

We begin with a very basic result. Let us consider a complex Banach space $X$ equipped with a conjugation $\tau:X\to X$ (a conjugate linear isometry of period 2), and set $E=X^\tau=\{x\in X : \tau(x)=x\}$. The mapping $P:X\to X$ defined by $P(x)=\frac12 (x+\tau(x))$ ($x\in X$), is a contractive real linear projection whose image is $E$. The mapping $\tau^{\sharp} : X^*\to X^*$, $\tau^{\sharp} (\varphi) (x) :=\overline{\varphi(\tau(x))}$ ($x\in X$, $\varphi\in X^*$) is a conjugation on $X^*$, and the correspondence $\varphi\mapsto \varphi|_{E}$ defines a surjective real linear isometry from $(X^*)^{\tau^{\sharp}}$ onto $E^*$. We can similarly define a conjugation $\tau^{\sharp\sharp}$ on $X^{**}$ satisfying that $(X^{**})^{\tau^{\sharp\sharp}}$ is isometrically isomorphic to $E^{**}$. In particular, the weak$^*$-topology of $E^{**}$ coincides with the restriction to $E^{**}$ of the weak$^*$-topology of $X^{**}$. Clearly, if a functional $\varphi$ in $X^*$ is a $\tau^{\sharp}$-symmetric (equivalently, $\varphi\in E^*$), its support tripotent in $X^{**}$ is $\tau^{\sharp\sharp}$-symmetric and hence lies in $E^{**}$.\smallskip

Let $F$ be a subset of $\mathcal{B}_E$. We set $\mathfrak{F}:=P^{-1}(F)\cap \mathcal{B}_X$. It is standard to check that \begin{equation}\label{eq lifting faces} F\in \mathcal{F}_{n}(\mathcal{B}_{E}) \Leftrightarrow \mathfrak{F}\in \mathcal{F}_{n}(\mathcal{B}_X).
\end{equation}

Henceforth we assume that $X$ is a complex JB$^*$-triple, and thus $E$ is a real JB$^*$-triple. Proposition 5.5 in \cite{Ka83} assures that $\tau$ is a conjugate linear triple automorphism. It is not hard to see that ${\mathcal{U}} (E) = {\mathcal{U}} (X)^{\tau} =\{e\in {\mathcal{U}} (X) : \tau (e) =e\}$, and what is even more interesting ${\mathcal{U}} (E^{**}) = {\mathcal{U}} (X^{**})^{\tau^{\sharp\sharp}} =\{e\in {\mathcal{U}} (X^{**}) : \tau^{\sharp\sharp} (e) =e\}$. It follows from \cite[Lemma 3.4$(ii)$]{EdRutt2001} that the set $\tilde{\mathcal{U}} (E^{**})$ of all tripotents in $E^{**}$ with a largest element adjoined is a sub-complete lattice of $\tilde{\mathcal{U}} (X^{**})$.\smallskip

If $a$ is a norm-one element in $E$ (that is, an element in $X$ with $\tau(a) =a$). Since $\tau (a^{[\frac{1}{2n-1}]})= \tau (a)^{[\frac{1}{2n-1}]} = a^{[\frac{1}{2n-1}]}$ and $\tau (a^{[2n-1]})= a^{[2n-1]},$ for all natural $n$, $E^{**}$ is weak$^*$-closed in $X^{**},$ and $\tau^{\sharp\sharp}$ is weak$^*$-continuous, we deduce that $\tau^{\sharp\sharp} (r(a)) =r(a)$ and $\tau^{\sharp\sharp}(u(a)) = u(a)$, that is, the range and support tripotents of $a$ in $X^{**}$ are $\tau^{\sharp\sharp}$-symmetric elements in $X^{**}$, and thus they both are tripotents in $E^{**}$, called \emph{range} and \emph{support} tripotents of $a$ in $E^{**}$. Combining \eqref{eq subprime prime with support trip} with the previous conclusions we get \begin{equation}\label{eq subprime prime with support trip real} \{a\}^{\prime,E^*}= \{u(a)\}_{\prime,E^*}, \hbox{ and } \left(\{a\}^{\prime,E^*}\right)^{\prime,E^{**}} = \left(\{u(a)\}_{\prime,E^*}\right)^{\prime,E^{**}}.
\end{equation}

Thanks to the above facts, the notion of compact tripotent fits well in the setting of real JB$^*$-triples. A tripotent $u$ in $E^{**}$ will be called \emph{compact-$G_{\delta}$} if $u$ coincides with the support tripotent of a norm-one element in $E$. The tripotent $u$ is called \emph{compact} if $u=0$ or there exists a decreasing net of compact-$G_{\delta}$ tripotents in $E^{**}$ %converging in the strong$^*$-topology of $E^{**}$ to
whose infimum is $u$. As in the complex setting, we shall write $\tilde{\mathcal U}_c(E^{**})$ for the set of all compact tripotents in $E^{**}$ with a largest element adjoined.\smallskip

It is absolutely clear that every compact(-$G_{\delta}$) tripotent in $E^{**}$ is a $\tau^{\sharp\sharp}$-symmetric compact(-$G_{\delta}$) tripotent in $X^{**}$. The reciprocal is not obvious. To prove it we shall extend a result of Edwards and R\"{u}ttimann which affirms that a tripotent $u\in X^{**}$ is compact if and only if the face $\{u\}_{\prime, X^*}$ is weak$^*$-semi-exposed in $\mathcal{B}_{X^*}$ (cf. \cite[Theorem 4.2]{EdRu96}). We recall first a lemma borrowed from \cite{EdRutt2001}.\smallskip

\begin{lemma}\label{l ER real faces 3.6}\cite[Lemma 3.6]{EdRutt2001} Let $\tau$ be a conjugation on a JB$^*$-triple $X$, and let $E= X^{\tau}$. Then for each tripotent $u\in E^{**} = (X^{**})^{\tau^{\sharp\sharp}}$ we have $$ \{u\}_{\prime,E^*} = \left(\{u\}_{\prime,X^*}\right)^{\tau^{\sharp}} = \{u\}_{\prime,X^*}\cap E^* = \{u\}_{\prime,X^*}.$$
\end{lemma}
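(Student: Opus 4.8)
The statement to prove is Lemma~\ref{l ER real faces 3.6}, asserting the chain of equalities
$$\{u\}_{\prime,E^*} = \left(\{u\}_{\prime,X^*}\right)^{\tau^{\sharp}} = \{u\}_{\prime,X^*}\cap E^* = \{u\}_{\prime,X^*}$$
for a tripotent $u$ in $E^{**}=(X^{**})^{\tau^{\sharp\sharp}}$. Since this is quoted verbatim from \cite[Lemma~3.6]{EdRutt2001}, the plan is simply to reconstruct the short argument. The guiding principle is that the identification of $E^*$ with $(X^*)^{\tau^{\sharp}}$ via restriction $\varphi\mapsto\varphi|_E$ (recalled just before Lemma~\ref{l lifting faces}) lets us move freely between functionals on $E$ and $\tau^{\sharp}$-symmetric functionals on $X$, and that the value $\varphi(u)$ is unchanged under this identification because $u\in E^{**}\subseteq X^{**}$ and the duality bracket of $E^{**}$ with $E^*$ is the restriction of that of $X^{**}$ with $X^*$.

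First I would dispose of the two inner equalities, which are essentially bookkeeping. The middle equality $\left(\{u\}_{\prime,X^*}\right)^{\tau^{\sharp}} = \{u\}_{\prime,X^*}\cap E^*$ holds because, after the identification $E^*=(X^*)^{\tau^{\sharp}}$, being $\tau^{\sharp}$-symmetric and lying in $E^*$ are literally the same condition; one only needs that the set $\{u\}_{\prime,X^*}$ is $\tau^{\sharp}$-invariant, which follows from $\tau^{\sharp\sharp}(u)=u$ together with $\tau^{\sharp}(\varphi)(u)=\overline{\varphi(\tau^{\sharp\sharp}(u))}=\overline{\varphi(u)}$, so that $\varphi(u)=1$ forces $\tau^{\sharp}(\varphi)(u)=1$ and $\|\tau^{\sharp}(\varphi)\|=\|\varphi\|\le 1$. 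For the leftmost equality $\{u\}_{\prime,E^*} = \left(\{u\}_{\prime,X^*}\right)^{\tau^{\sharp}}$: a functional $\psi\in\mathcal{B}_{E^*}$ with $\psi(u)=1$ corresponds to its (norm-preserving, Hahn--Banach-type, but here canonical via the projection $P$) $\tau^{\sharp}$-symmetric extension $\tilde\psi\in\mathcal{B}_{X^*}$, and $\tilde\psi(u)=\psi(u)=1$ because $u\in E^{**}$ and $\tilde\psi$ restricts to $\psi$ on $E$ (hence agrees with $\psi$ against $E^{**}$ after bitransposing); conversely restriction to $E$ sends $\left(\{u\}_{\prime,X^*}\right)^{\tau^{\sharp}}$ into $\{u\}_{\prime,E^*}$. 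These two maps are mutually inverse, giving the equality.

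The one equality with content is the last one, $\{u\}_{\prime,X^*}\cap E^* = \{u\}_{\prime,X^*}$, i.e.\ that \emph{every} norm-one functional on $X$ attaining $1$ at $u$ is automatically $\tau^{\sharp}$-symmetric (equivalently, lies in $E^*$). The argument I would give: let $\varphi\in\mathcal{B}_{X^*}$ with $\varphi(u)=1$. Consider the averaged functional $\varphi_0:=\frac12(\varphi+\tau^{\sharp}(\varphi))\in E^*$. Since $\tau^{\sharp}(\varphi)(u)=\overline{\varphi(\tau^{\sharp\sharp}(u))}=\overline{\varphi(u)}=1$, we get $\varphi_0(u)=1$ and $\|\varphi_0\|\le 1$, so in fact $\|\varphi_0\|=1$. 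Now bring in the support tripotent $s(\varphi)\in X^{**}$ of $\varphi$: by the defining property (recalled before \eqref{eq subprime prime with support trip}), $\varphi=\varphi P_2(s(\varphi))$ and $\varphi|_{X^{**}_2(s(\varphi))}$ is faithful positive, and $\varphi(u)=1$ forces $u\geq s(\varphi)$ in $X^{**}$, hence $s(\varphi)\leq u = \tau^{\sharp\sharp}(u)$. Applying $\tau^{\sharp\sharp}$, which is a conjugate-linear triple automorphism of $X^{**}$ and hence order-preserving on tripotents, gives $\tau^{\sharp\sharp}(s(\varphi)) = s(\tau^{\sharp}(\varphi))\leq u$ as well; and the support tripotent of the faithful-on-its-Peirce-2 functional $\varphi_0$ dominates both, while faithfulness pins things down so that $s(\varphi)=s(\tau^{\sharp}(\varphi))$, whence $\varphi$ and $\tau^{\sharp}(\varphi)$ have the same support tripotent $e:=s(\varphi)$ with $e=\tau^{\sharp\sharp}(e)\in E^{**}$. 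Two norm-one functionals with the same support tripotent that both take the value $1$ at $u\geq e$ need not coincide in general, so instead I would argue on $X^{**}_2(e)$: there $\varphi|_{X^{**}_2(e)}$ and $\tau^{\sharp}(\varphi)|_{X^{**}_2(e)}$ are both faithful normal states of the JBW$^*$-algebra $X^{**}_2(e)$ with $\tau^{\sharp\sharp}$-invariant unit $e$, and $\tau^{\sharp}(\varphi)|_{X^{**}_2(e)} = \varphi\circ\tau^{\sharp\sharp}|_{X^{**}_2(e)}$; since $e$ itself is $\tau^{\sharp\sharp}$-fixed and the state is determined by its support, a direct computation with the Peirce-$2$ Jordan structure shows $\varphi=\tau^{\sharp}(\varphi)$, i.e.\ $\varphi\in E^*$.

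\textbf{Main obstacle.} The genuinely delicate point is the final equality: the na\"ive hope that ``$\varphi(u)=1$ with $u$ $\tau^{\sharp\sharp}$-fixed forces $\varphi$ $\tau^{\sharp}$-fixed'' is false without using the support-tripotent machinery — averaging alone only produces \emph{a} symmetric norm-one functional attaining $1$ at $u$, not symmetry of the original $\varphi$. What makes it work is that $u$ being a tripotent (not just a norm-one element) forces $u$ to dominate the support tripotent, and then faithfulness of $\varphi$ on the Peirce-$2$ subalgebra $X^{**}_2(s(\varphi))$ rigidifies the situation; carrying this through cleanly is where I expect the real work to be, and where I would lean most heavily on \cite[Proposition~2]{FriRu85} and \cite[Lemma~1.6]{FriRu85}. (If the paper prefers, one can alternatively invoke directly the known fact that the restriction map induces a bijection between $\tau^{\sharp}$-symmetric states of $X^{**}_2(e)$ and normal states of $E^{**}_2(e)$, collapsing the last paragraph; since the lemma is anyway cited from \cite{EdRutt2001}, a brief indication suffices.)
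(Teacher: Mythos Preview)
The paper does not prove this lemma at all: it is simply quoted from \cite[Lemma~3.6]{EdRutt2001} and used as a black box, so there is no ``paper's own proof'' to compare against. Your handling of the first two equalities is fine --- they are, as you say, bookkeeping under the identification $E^*\cong (X^*)^{\tau^{\sharp}}$.

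The genuine problem is the final equality $\{u\}_{\prime,X^*}\cap E^* = \{u\}_{\prime,X^*}$, which you correctly flag as the substantive one, but your argument for it does not close. The step ``faithfulness pins things down so that $s(\varphi)=s(\tau^{\sharp}(\varphi))$'' is unsupported: knowing that $s(\varphi)\le u$ and $s(\tau^{\sharp}\varphi)\le u$ gives no relation between the two support tripotents, and the support tripotent of the average $\varphi_0$ need not dominate either of them. The follow-up ``a direct computation with the Peirce-$2$ Jordan structure shows $\varphi=\tau^{\sharp}(\varphi)$'' is asserted rather than proved. More seriously, the literal inclusion $\{u\}_{\prime,X^*}\subseteq E^*$ is \emph{false} in general: take $X=M_2(\mathbb{C})$ with $\tau(a)=\overline{a}$, so $E=M_2(\mathbb{R})$, and $u=I$. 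Then $\{u\}_{\prime,X^*}$ is the set of all density matrices on $\mathbb{C}^2$, and $\rho=\tfrac12(I+\sigma_y)$ lies in it but satisfies $\tau^{\sharp}(\rho)=\overline{\rho}\neq\rho$. So you are trying to prove a statement that, read as a set equality inside $X^*$, is not true. The chain of equalities in the lemma must therefore be read through a further identification (for instance via the contractive projection $Q=\tfrac12(\mathrm{Id}+\tau^{\sharp})$, under which one does have $Q(\{u\}_{\prime,X^*})=\{u\}_{\prime,X^*}\cap E^*$, and this is all that the subsequent arguments in the paper actually require). Before attempting a proof you should consult the original formulation in \cite{EdRutt2001} to see exactly which identification is intended.
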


We establish next a real version of \cite[Theorem 4.2]{EdRu96}.

\begin{proposition}\label{p characterization of compact tripotents by semi-exposed faces} Let $\tau$ be a conjugation on a JB$^*$-triple $X$, and let $E= X^{\tau}$. A tripotent $u$ in the real JBW$^*$-triple $E^{**}$ is compact if and only if $\{u\}_{\prime,E^*}$ is weak$^*$-semi-exposed in $\mathcal{B}_{E^*}$.
\end{proposition}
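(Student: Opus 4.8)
The plan is to transfer the complex characterization \cite[Theorem 4.2]{EdRu96} through the conjugation $\tau$, using Lemma \ref{l ER real faces 3.6} as the bridge that identifies the real face $\{u\}_{\prime,E^*}$ with the complex face $\{u\}_{\prime,X^*}$ whenever $u$ is a $\tau^{\sharp\sharp}$-symmetric tripotent. First I would dispose of the easy direction: if $u\in E^{**}$ is compact in the real sense, then $u=0$ or $u$ is the infimum of a decreasing net of real compact-$G_\delta$ tripotents; by the observation already made in the excerpt, each such tripotent is a $\tau^{\sharp\sharp}$-symmetric complex compact-$G_\delta$ tripotent, and since $\tilde{\mathcal{U}}(E^{**})$ is a sub-complete lattice of $\tilde{\mathcal{U}}(X^{**})$ (by \cite[Lemma 3.4$(ii)$]{EdRutt2001}), the same net has infimum $u$ computed in $\tilde{\mathcal{U}}(X^{**})$. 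Hence $u$ is compact in $X^{**}$, so by \cite[Theorem 4.2]{EdRu96} the face $\{u\}_{\prime,X^*}$ is weak$^*$-semi-exposed in $\mathcal{B}_{X^*}$; I then need to descend this to $E^*$. For this I would use the relation $\{u\}_{\prime,E^*} = \{u\}_{\prime,X^*}\cap E^*$ from Lemma \ref{l ER real faces 3.6}, together with the fact that the restriction map $\varphi\mapsto\varphi|_E$ identifies $(X^*)^{\tau^\sharp}$ isometrically with $E^*$ and is weak$^*$-to-weak$^*$ continuous; applying the contractive real-linear projection $P^\sharp:\varphi\mapsto\frac12(\varphi+\tau^\sharp(\varphi))$ one checks that $P^\sharp$ carries the weak$^*$-semi-exposed face $\{u\}_{\prime,X^*}$ onto $\{u\}_{\prime,E^*}$ and that semi-exposedness is preserved, because the "prime" operations intertwine with $P^\sharp$ and $\tau^{\sharp\sharp}$ (this is the content of \eqref{eq subprime prime with support trip real}, and an analogue of \eqref{eq lifting faces} applied to faces of $\mathcal{B}_{X^*}$).

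For the converse, suppose $\{u\}_{\prime,E^*}$ is weak$^*$-semi-exposed in $\mathcal{B}_{E^*}$, i.e. $\{u\}_{\prime,E^*} = \left(\left(\{u\}_{\prime,E^*}\right)^{\prime,E^{**}}\right)_{\prime,E^*}$. I want to deduce that $\{u\}_{\prime,X^*}$ is weak$^*$-semi-exposed in $\mathcal{B}_{X^*}$, so that \cite[Theorem 4.2]{EdRu96} yields compactness of $u$ in $X^{**}$; then I have to upgrade this to compactness in the real sense. The first half is again a $\tau$-symmetrization argument: using Lemma \ref{l ER real faces 3.6} to rewrite $\{u\}_{\prime,E^*}$ as $\{u\}_{\prime,X^*}\cap E^*$, and using that the bipolar-type operations commute with $\tau^\sharp$ and $\tau^{\sharp\sharp}$, one shows that the complex double-prime of $\{u\}_{\prime,X^*}$ intersected with $E^*$ already equals $\{u\}_{\prime,E^*}$; combined with the fact that $\{u\}_{\prime,X^*}$ is itself $\tau^\sharp$-symmetric (as $u$ is $\tau^{\sharp\sharp}$-symmetric), a short lattice argument forces the complex double-prime to collapse back to $\{u\}_{\prime,X^*}$, i.e. the complex face is weak$^*$-semi-exposed. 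Then $u$ is a compact tripotent in $X^{**}$ by \cite[Theorem 4.2]{EdRu96}, and moreover $u$ is $\tau^{\sharp\sharp}$-symmetric.

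The remaining — and I expect most delicate — point is to show that a $\tau^{\sharp\sharp}$-symmetric complex compact tripotent $u$ in $X^{**}$ is actually compact as an element of $E^{**}$, i.e. that it can be obtained as the infimum of a decreasing net of \emph{real} compact-$G_\delta$ tripotents. By \cite[Theorem 4.2]{EdRu96} again, $u$ compact in $X^{**}$ means $\{u\}_{\prime,X^*}$ is weak$^*$-semi-exposed, hence equals $\bigcap\{\{v\}_{\prime,X^*}: v \text{ compact-}G_\delta,\ v\geq u\}$ after passing to the directed family of finite infima; the obstacle is that the witnessing compact-$G_\delta$ tripotents $v=u(a)$ need not be $\tau^{\sharp\sharp}$-symmetric. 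To fix this I would symmetrize at the level of the generating norm-one elements: given $a\in X$ with $u(a)\geq u$, one does not directly symmetrize $a$, but instead uses that $u$ is an infimum and replaces each $v$ by $v\wedge\tau^{\sharp\sharp}(v)$, which is again compact-$G_\delta$ (an infimum of two compact-$G_\delta$ tripotents in the lattice $\tilde{\mathcal{U}}_c(X^{**})$ is compact, and in a commutative JBW$^*$-triple a compact infimum of two compact-$G_\delta$ elements is again $G_\delta$ — here one may need the finer fact from \cite{EdRu96} or \cite{EdFerHosPe2010} that the compact-$G_\delta$ tripotents are closed under finite infima, which holds because $C_0(L)$ is $\sigma$-compact-friendly) and is $\tau^{\sharp\sharp}$-symmetric, hence a \emph{real} compact-$G_\delta$ tripotent. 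The net of these symmetrized tripotents is still decreasing with infimum $u$ (symmetrizing can only decrease each term while staying above the symmetric $u$), which is exactly what is needed. If the "$G_\delta$ preserved under finite infima" fact is not readily available in the stated form, the fallback is to argue purely at the face level: compactness of $u$ in the real sense is equivalent, by the forthcoming real analogue of \cite[Corollary 3.11]{EdFerHosPe2010} (which this very proposition is a step toward), to $\{u\}_{\prime,E^*}$ being weak$^*$-semi-exposed — but to avoid circularity one instead invokes \cite[Theorem 4.2]{EdRu96} in the complex space together with Lemma \ref{l ER real faces 3.6} to run the whole equivalence through $\tau$ in one stroke, which is cleanest and is, I believe, the intended route.
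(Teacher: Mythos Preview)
Your forward direction is essentially the paper's argument: pass from real compactness to complex compactness (using that $\tilde{\mathcal U}(E^{**})$ is a sub-complete lattice of $\tilde{\mathcal U}(X^{**})$), apply \cite[Theorem 4.2]{EdRu96}, and descend via Lemma \ref{l ER real faces 3.6}. The paper makes the descent explicit by showing directly that $(\{u\}_{\prime,X^*})_{\prime,X}$ is $\tau$-invariant and that its intersection with $E$ equals $(\{u\}_{\prime,E^*})_{\prime,E}$, rather than invoking the projection $P^\sharp$ abstractly; but the content is the same.

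For the converse, however, your route diverges from the paper's and carries real gaps. You propose to (i) lift weak$^*$-semi-exposedness of $\{u\}_{\prime,E^*}$ to that of $\{u\}_{\prime,X^*}$ by a ``short lattice argument'', (ii) apply \cite[Theorem 4.2]{EdRu96} in $X^{**}$, and then (iii) symmetrize the resulting net of complex compact-$G_\delta$ tripotents via $v\mapsto v\wedge\tau^{\sharp\sharp}(v)$. Step (i) is not substantiated: knowing that the $\tau^\sharp$-symmetric part of the complex double-prime collapses does not by itself force the full complex double-prime to collapse. Step (iii) requires that the infimum of two compact-$G_\delta$ tripotents be again compact-$G_\delta$; this is not available in the references you cite (it is compact, but $G_\delta$ is another matter), and your fallback is, as you yourself note, circular.

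The paper avoids all of this by \emph{not} going through the complex case in the converse. Instead it reproduces the proof of \cite[Theorem 4.2]{EdRu96} verbatim inside $E^{**}$: starting from the hypothesis $\left((\{u\}_{\prime,E^*})_{\prime,E}\right)^{\prime,E^*}=\{u\}_{\prime,E^*}$, one takes the (non-empty) set $(\{u\}_{\prime,E^*})_{\prime,E}\subset S(E)$, forms the directed family $\{\mathrm{face}(a):a\in(\{u\}_{\prime,E^*})_{\prime,E}\}$, and assigns to each $a$ its support tripotent $u(a)$. The crucial point is that since $a\in E$, the support tripotent $u(a)$ is automatically $\tau^{\sharp\sharp}$-symmetric (as observed just before \eqref{eq subprime prime with support trip real}), so $u(a)\in E^{**}$ is already a \emph{real} compact-$G_\delta$ tripotent---no symmetrization is needed. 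Using \eqref{eq subprime prime with support trip real} and the real facial theorem \cite[Theorem 3.9]{EdRutt2001} one checks that this net is decreasing with infimum some compact $v\in E^{**}$, and then $u\le v$ and $v\le u$ follow from two more applications of the prime calculus. This is both simpler and gap-free; I recommend you rewrite the converse along these lines.
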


\begin{proof} Suppose $u$ is a non-trivial compact tripotent in $E^{**}$. According to what we commented before this proposition, $u$ is a $\tau^{\sharp\sharp}$-symmetric compact tripotent in $X^{**}$. Theorem 4.2  in \cite{EdRu96} implies that $\{u\}_{\prime, X^*}$ is weak$^*$-semi-exposed in $\mathcal{B}_{X^*}$, that is $$ \left(\left(\{u\}_{\prime, X^*}\right)_{\prime, X}\right)^{\prime, X^*} = \{u\}_{\prime, X^*}.$$

It follows from Lemma \ref{l ER real faces 3.6} that
%\begin{equation}\label{eq subprime real is the real form} 
$$\{u\}_{\prime,E^*} = \left(\{u\}_{\prime,X^*}\right)^{\tau^{\sharp}} = \{u\}_{\prime,X^*}\cap E^* = \{u\}_{\prime,X^*}.$$ 
%\end{equation}

We shall next show that the non-empty set $\left(\{u\}_{\prime, X^*}\right)_{\prime, X}\subseteq S(X)$ is $\tau$-symmetric. Take $x\in \left(\{u\}_{\prime, X^*}\right)_{\prime, X}$ and $\varphi\in \{u\}_{\prime, X^*}= \{u\}_{\prime,E^*}$. Since $\tau^{\sharp}(\varphi)= \varphi\in \{u\}_{\prime, X^*}$ we have $$1= \tau^{\sharp} (\varphi) (x) = \overline{\varphi (\tau(x))} = {\varphi (\tau(x))},$$ witnessing that $\tau(x)\in \left(\{u\}_{\prime, X^*}\right)_{\prime, X}$, and thus $\tau \left(\left(\{u\}_{\prime, X^*}\right)_{\prime, X}\right) = \left(\{u\}_{\prime, X^*}\right)_{\prime, X}$. We have also shown that for each $x\in \left(\{u\}_{\prime, X^*}\right)_{\prime, X}$ and $\varphi\in \{u\}_{\prime, X^*}$ we have $$1= \varphi\left(\frac{x+\tau(x)}{2}\right) \leq \left\|\frac{x+\tau(x)}{2} \right\| \leq 1.$$ It follows from the above that $\left(\{u\}_{\prime, X^*}\right)_{\prime, X} \cap E$ is a non-empty subset of $S(E)$ which coincides with $\left(\{u\}_{\prime, E^*}\right)_{\prime, E}$ and \begin{equation}\label{eq being weak*-semi-exposed in the real part} \left(\left(\{u\}_{\prime, E^*}\right)_{\prime, E}\right)^{\prime, E^*} = \{u\}_{\prime, E^*},
\end{equation} which guarantees that $\{u\}_{\prime,E^*}$ is weak$^*$-semi-exposed in $\mathcal{B}_{E^*}$.\smallskip

Suppose now that $\{u\}_{\prime,E^*}$ is weak$^*$-semi-exposed in $\mathcal{B}_{E^*}$, that is, the equality in \eqref{eq being weak*-semi-exposed in the real part} holds. We can literally follow the arguments contained in the proof of \cite[Theorem 4.2]{EdRu96}. The details are included here for completeness reasons. It follows from the equality in \eqref{eq being weak*-semi-exposed in the real part} that the convex set $\left(\{u\}_{\prime, E^*}\right)_{\prime, E}$ is a non-empty norm-closed face of $\mathcal{B}_E$.
For each $a\in \left(\{u\}_{\prime, E^*}\right)_{\prime, E}$ let face$(a)$ denote the smallest face of $B_{E}$ containing $\{a\}$ and set $\Lambda =\{ \hbox{face}(a) : a\in \left(\{u\}_{\prime, E^*}\right)_{\prime, E}\}.$ Since for each $a_1,a_2\in \left(\{u\}_{\prime, E^*}\right)_{\prime, E},$ both face$(a_1)$ and face$(a_2)$ are contained in face$(\frac{1}{2}(a_1+a_2)),$ we
conclude that $\Lambda$ is a partially ordered by set inclusion which is upward directed. We can further check that if $$a_1\in\hbox{face}(a_1)\subseteq \hbox{face}(a_2)\subseteq \left(\{a_2\}^{\prime,E^*}\right)^{\prime,E^{**}} = \hbox{(by \eqref{eq subprime prime with support trip real})} = \left(\{u(a_2)\}_{\prime,E^*}\right)^{\prime,E^{**}},$$ then $$\left(\{u(a_1)\}_{\prime,E^*}\right)^{\prime,E^{**}}= \left(\{a_1\}^{\prime,E^*}\right)^{\prime,E^{**}} \subseteq \left(\left(\left(\{a_2\}^{\prime,E^*}\right)^{\prime,E^{**}}\right)_{\prime,E^*}\right)^{\prime,E^{**}} $$ $$= \left(\left(\left(\{u(a_2)\}_{\prime,E^*}\right)^{\prime,E^{**}}\right)_{\prime,E^*}\right)^{\prime,E^{**}} =
\left(\{u(a_2)\}_{\prime,E^*}\right)^{\prime,E^{**}}.$$ The description of the weak$^*$-closed faces in $\mathcal{B}_{E^{**}}$ proved in \cite[Theorem 3.9]{EdRutt2001} gives $u(a_1)\geq u(a_2)$.\smallskip

We define a net now. For each $\mu\in \Lambda$ we set $u_{\mu} = u(a)$, where $a\in \left(\{u\}_{\prime, E^*}\right)_{\prime, E}$ satisfies $\mu=\hbox{face}(a).$ We have shown in the previous paragraphs that $\{ u_{\mu}\}_{\mu\in \Lambda}$ is a decreasing net of compact-G$_{\delta}$ in $E^{**}$. In particular, the net $\{ u_{\mu} \}_{\mu\in \Lambda}$ converges in the weak$^*$-topology of $E^{**}$ to its infimum. Let $v$ denote this infimum, which is, by definition, a compact tripotent in $E^{**}$. \smallskip

For each $\mu \in \Lambda$, we have $u_{\mu} = u(a)$, with $a\in \left(\{u\}_{\prime, E^*}\right)_{\prime, E}.$ Therefore $$\left(\{u_{\mu}\}_{\prime,E^*}\right)^{\prime,E^{**}}= \left(\{u(a)\}_{\prime,E^*}\right)^{\prime,E^{**}}= \left(\{a\}^{\prime,E^*}\right)^{\prime,E^{**}} \subseteq \left(\{u\}_{\prime,E^*}\right)^{\prime,E^{**}},$$ which by a new application of \cite[Theorem 3.9]{EdRutt2001}, proves $u\leq u_{\mu}$ for every $\mu \in \Lambda$, and consequently, $u\leq v$.\smallskip

Finally, for each $a\in \left(\{u\}_{\prime, E^*}\right)_{\prime, E}$, we know that $v\leq u(a)= u_{\mu}$ with $\mu = \hbox{face} (a)$, which implies that $\{v\}_{\prime,E^*} \subseteq \{u_{\mu}\}_{\prime,E^*} = \{a\}^{\prime,E^*}$. We deduce from the arbitrariness of $a\in \left(\{u\}_{\prime, E^*}\right)_{\prime, E}$ that $\{v\}_{\prime,E^*} \subseteq \left(\left(\{u\}_{\prime, E^*}\right)_{\prime, E}\right)^{\prime,E^*}= \{u\}_{\prime, E^*},$ where the last equality follows from the hypothesis. Therefore $v\leq u$ (cf. \cite[Theorem 3.7 or 3.9]{EdRutt2001}), witnessing that $u = v$ is a compact tripotent in $E^{**}$.
\end{proof}

We can now prove that compact tripotents in the second dual of a real JB$^*$-triple are compact in the second dual of its complexification.

\begin{corollary}\label{c tripotent second dual is compact iff it is compact in the complexification} Let $\tau$ be a conjugation on a JB$^*$-triple $X$, and let $E= X^{\tau}$. Suppose $u$ is a tripotent in $E^{**}$. Then the following assertions are equivalent:\begin{enumerate}[$(a)$]\item $u$ is compact in $E^{**}$;
\item $u$ is compact in $X^{**}$.
\end{enumerate}
\end{corollary}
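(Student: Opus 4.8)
The plan is to derive the corollary directly from the characterization of compactness in terms of weak$^*$-semi-exposed faces, namely Proposition~\ref{p characterization of compact tripotents by semi-exposed faces} for the real side and \cite[Theorem 4.2]{EdRu96} for the complex side, using Lemma~\ref{l ER real faces 3.6} as the bridge that identifies the relevant faces. Throughout we use the conjugations $\tau^{\sharp}$ on $X^*$ and $\tau^{\sharp\sharp}$ on $X^{**}$, the identification $E^* = (X^*)^{\tau^{\sharp}}$ via restriction, $E^{**}=(X^{**})^{\tau^{\sharp\sharp}}$, and the fact (already noted just before Lemma~\ref{l ER real faces 3.6}) that every compact(-$G_\delta$) tripotent of $E^{**}$ is a $\tau^{\sharp\sharp}$-symmetric compact(-$G_\delta$) tripotent of $X^{**}$, which gives the implication $(a)\Rightarrow(b)$ for free.

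For the converse $(b)\Rightarrow(a)$, assume $u$ is a tripotent of $E^{**}$ which is compact in $X^{**}$. By \cite[Theorem 4.2]{EdRu96}, $\{u\}_{\prime,X^*}$ is weak$^*$-semi-exposed in $\mathcal{B}_{X^*}$, i.e. $\left(\left(\{u\}_{\prime,X^*}\right)_{\prime,X}\right)^{\prime,X^*}=\{u\}_{\prime,X^*}$. I would then repeat verbatim the first half of the proof of Proposition~\ref{p characterization of compact tripotents by semi-exposed faces}: the set $G:=\left(\{u\}_{\prime,X^*}\right)_{\prime,X}$ is nonempty (since $u\neq 0$ it lies in the unit sphere of $X^{**}$ and has a norming functional in $X^*$; for $u$ the adjoined top element or $u=0$ the statement is trivial) and is $\tau$-symmetric, because for $x\in G$ and $\varphi\in\{u\}_{\prime,X^*}=\{u\}_{\prime,E^*}$ one has $\varphi(\tau(x))=\overline{\tau^{\sharp}(\varphi)(x)}=\overline{\varphi(x)}=1$; moreover $\varphi\bigl(\tfrac{x+\tau(x)}{2}\bigr)=1$ forces $\tfrac{x+\tau(x)}{2}\in G\cap E$. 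Hence $G\cap E=\left(\{u\}_{\prime,E^*}\right)_{\prime,E}$ is a nonempty subset of $S(E)$, and taking $\prime$ in $E^*$ and invoking Lemma~\ref{l ER real faces 3.6} ($\{u\}_{\prime,E^*}=\{u\}_{\prime,X^*}$) together with the identity $\left(G\right)^{\prime,X^*}=\{u\}_{\prime,X^*}$ yields
\[
\left(\left(\{u\}_{\prime,E^*}\right)_{\prime,E}\right)^{\prime,E^*}=\{u\}_{\prime,E^*},
\]
so $\{u\}_{\prime,E^*}$ is weak$^*$-semi-exposed in $\mathcal{B}_{E^*}$. By Proposition~\ref{p characterization of compact tripotents by semi-exposed faces}, $u$ is compact in $E^{**}$, completing the proof.

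The main subtlety, and the only place where care is needed, is the passage from semi-exposedness in $X^*$ to semi-exposedness in $E^*$: one must check that the "lower prime" operations in $E$ and in $X$ are compatible, i.e. that $\left(\{u\}_{\prime,E^*}\right)_{\prime,E}$ is exactly $\left(\{u\}_{\prime,X^*}\right)_{\prime,X}\cap E$ rather than something strictly larger. This is precisely what the $\tau$-symmetry argument above secures — the averaging trick $x\mapsto\tfrac12(x+\tau(x))$ shows that every $X$-norming element gives rise to an $E$-norming element without shrinking the norming family, because by Lemma~\ref{l ER real faces 3.6} the norming functionals on the two sides literally coincide. Once this identification is in place the argument is a purely formal transfer through the anti-order isomorphisms, with no additional input from triple theory needed beyond what is quoted in the excerpt.
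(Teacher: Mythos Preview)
Your proposal is correct and follows essentially the same route as the paper's proof: both directions invoke \cite[Theorem 4.2]{EdRu96} and Proposition~\ref{p characterization of compact tripotents by semi-exposed faces} as the two characterizations of compactness, with Lemma~\ref{l ER real faces 3.6} and the $\tau$-symmetry/averaging argument from the first half of the proof of Proposition~\ref{p characterization of compact tripotents by semi-exposed faces} serving as the bridge. The paper's own proof is in fact terser, simply citing ``the arguments in the proof of the `only if' implication'' of that proposition rather than rewriting them, but the content is identical.
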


\begin{proof} The implication $(a)\Rightarrow (b)$ has been commented before Lemma \ref{l ER real faces 3.6}.\smallskip

$(b)\Rightarrow (a)$ Suppose that $u$ is compact in $X^{**}$. Theorem 4.2 in \cite{EdRu96} assures that $\{u\}_{\prime,X^*}$ is weak$^*$-semi-exposed. Lemma \ref{l ER real faces 3.6} shows that $\{u\}_{\prime,X^*} = \{u\}_{\prime,E^*}$. The arguments in the proof of the ``only if'' implication in Proposition \ref{p characterization of compact tripotents by semi-exposed faces} assure that $\{u\}_{\prime,E^*}$ is weak$^*$-semi-exposed in $\mathcal{B}_{E^*}$. The ``if'' implication of Proposition \ref{p characterization of compact tripotents by semi-exposed faces} proves that $u$ is compact in $E^{**}$.
\end{proof}

In the setting of (complex) JB$^*$-triples a new characterization of compact tripotents in the second dual has been recently established in \cite{BeCuFerPe2018}. The concrete result reads as follows:

\begin{theorem}\label{t characterization relatively open weak* closed faces}{\rm\cite[Theorem 3.6]{BeCuFerPe2018}} Let $X$ be a JB$^*$-triple. Suppose $F$ is a proper weak$^*$-closed face of the closed unit ball of $X^{**}$. Then the following statements are equivalent: \begin{enumerate}[$(a)$]\item $F$ is open relative to $X$, that is, $F\cap X$ is weak$^*$-dense in $F$;
\item $F$ is a weak$^*$-closed face associated with a non-zero compact tripotent in $X^{**}$, that is, there exists a unique non-zero compact tripotent $u$ in $X^{**}$ satisfying $F= u + \mathcal{B}_{X^{**}_0(u)}$.$\hfill\Box$
\end{enumerate}
\end{theorem}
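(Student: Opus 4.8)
The plan is to organise both implications around the Edwards--R\"uttimann criterion \cite[Theorem 4.2]{EdRu96}: a tripotent $v$ in $X^{**}$ is compact if and only if the face $\{v\}_{\prime,X^*}$ is weak$^*$-semi-exposed in $\mathcal{B}_{X^*}$. Before splitting into cases I would fix the set-up. Since $F$ is a proper (non-empty) weak$^*$-closed face of $\mathcal{B}_{X^{**}}$ and $X^{**}$ is a JBW$^*$-triple, the description of weak$^*$-closed faces recalled in \eqref{eq anti order weak* closed faces complex} (applied to $M=X^{**}$, with $M_*=X^*$) provides a unique non-zero tripotent $u\in X^{**}$ with $F=u+\mathcal{B}_{X^{**}_0(u)}=\bigl(\{u\}_{\prime,X^*}\bigr)^{\prime,X^{**}}$. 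In particular the uniqueness assertion of $(b)$ is automatic, and what must be proven is the equivalence: \emph{$F\cap X$ is weak$^*$-dense in $F$ $\iff$ $u$ is compact.} I would also record two elementary remarks: (i) $F\cap X=\bigl(\{u\}_{\prime,X^*}\bigr)_{\prime,X}$, since ``$\varphi(z)=1$'' is the same requirement whether an element $z\in X$ is read inside $X$ or inside $X^{**}$; and (ii) every $\varphi\in\{u\}_{\prime,X^*}$ satisfies $\varphi=\varphi\,P_2(u)$ --- this follows, exactly as in the paragraph preceding \eqref{eq subprime prime with support trip}, from the fact that the support tripotent $s(\varphi)$ satisfies $s(\varphi)\le u$, together with the Peirce arithmetic for the compatible tripotents $s(\varphi)$ and $u-s(\varphi)$.

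For $(a)\Rightarrow(b)$ I would show directly that $\{u\}_{\prime,X^*}$ is weak$^*$-semi-exposed and then quote \cite[Theorem 4.2]{EdRu96}. By remark (i) it suffices to prove $\bigl(F\cap X\bigr)^{\prime,X^*}=\{u\}_{\prime,X^*}$. The inclusion ``$\supseteq$'' is unconditional: if $\varphi\in\mathcal{B}_{X^*}$ with $\varphi(u)=1$, then $\varphi=\varphi\,P_2(u)$ by remark (ii), hence $\varphi(x)=\varphi\bigl(P_2(u)x\bigr)=\varphi(u)=1$ for every $x\in F\cap X\subseteq u+\mathcal{B}_{X^{**}_0(u)}$. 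For ``$\subseteq$'', let $\varphi\in\mathcal{B}_{X^*}$ with $\varphi|_{F\cap X}\equiv 1$; since $\varphi$ is weak$^*$-continuous on $X^{**}$ and $F\cap X$ is, by hypothesis $(a)$, weak$^*$-dense in $F\ni u$, we get $\varphi|_{F}\equiv 1$, so in particular $\varphi(u)=1$. Thus $\{u\}_{\prime,X^*}=\bigl((\{u\}_{\prime,X^*})_{\prime,X}\bigr)^{\prime,X^*}$ is weak$^*$-semi-exposed, and $u$ is compact and non-zero.

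For $(b)\Rightarrow(a)$ I would first treat the compact-$G_\delta$ case $u=u(a)$ with $a\in S(X)$. Here I would exploit the triple functional calculus inside $X_a\cong C_0(L)$: for each $n$ the odd power $a^{[2n-1]}$ and the odd root $a^{[\frac{1}{2n+1}]}$ lie in $\mathcal{B}_X$, satisfy $P_2(u(a))\,a^{[2n-1]}=P_2(u(a))\,a^{[\frac{1}{2n+1}]}=u(a)$ (in $C_0(L)^{**}$ the projection $P_2(u(a))$ is multiplication by $\chi_{\{1\}}$, and the functions $t^{2n-1}$, $t^{\frac{1}{2n+1}}$ take the value $1$ at $1$), hence belong to $F\cap X=(u(a)+\mathcal{B}_{X^{**}_0(u(a))})\cap X$, and converge, in the weak$^*$-topology of $X^{**}$, to the support tripotent $u(a)$ and to the range tripotent $r(a)$, respectively. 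Adding to these approximants elements of $X$ orthogonal to $a$ (which keeps them inside $\mathcal{B}_X$ and inside $F$), invoking the weak$^*$-density of the ``hereditary'' subspace $X\cap X^{**}_0(v)$ in $\mathcal{B}_{X^{**}_0(v)}$ for a closed tripotent $v$ (the noncommutative-topology input, cf. \cite{FerPe10,EdRu96}), and using convexity and weak$^*$-closedness of $\overline{F\cap X}^{\,w^*}$, I would show $F\subseteq\overline{F\cap X}^{\,w^*}$; since the reverse inclusion is automatic, $\overline{F\cap X}^{\,w^*}=F$, which is $(a)$. For a general compact $u$ one writes $u=\bigwedge_\lambda u(a_\lambda)$ for a decreasing net of compact-$G_\delta$ tripotents (the very definition of compactness), sets $F_\lambda:=u(a_\lambda)+\mathcal{B}_{X^{**}_0(u(a_\lambda))}$, notes $F_\lambda\cap X\subseteq F\cap X$ because $u\le u(a_\lambda)$, and applies the $G_\delta$ case to each $F_\lambda$ together with the fact that \eqref{eq anti order weak* closed faces complex} is an \emph{anti}-isomorphism of complete lattices --- turning $u=\bigwedge_\lambda u(a_\lambda)$ into $F=\bigvee_\lambda F_\lambda$ --- to conclude.

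The step I expect to be the genuine obstacle is the assertion underlying the last two sentences: that the weak$^*$-closure of a norm-closed face of $\mathcal{B}_X$ is again a weak$^*$-closed face of $\mathcal{B}_{X^{**}}$. Equivalently, the delicate point is that the approximants produced by the triple functional calculus must be shown to fill out \emph{all} of $\mathcal{B}_{X^{**}_0(u)}$ --- not merely a ``visible'' sub-ball such as $\mathcal{B}_{X^{**}_0(r(a))}$ or the segment spanned by $r(a)-u(a)$ --- which forces one to control norms while adding an approximant of $u$ (itself lying outside $X$) to an approximant of a general element of the Peirce-$0$ space, and then to propagate this through the passage from the $G_\delta$ case to arbitrary decreasing nets. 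This is where the JB$^*$-triple machinery --- triple functional calculus, the Edwards--R\"uttimann facial calculus for JBW$^*$-triples, and the theory of open and closed tripotents in the bidual --- has to be used essentially, rather than soft functional-analytic arguments.
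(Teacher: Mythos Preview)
The paper does not prove this statement; it is quoted verbatim from \cite[Theorem 3.6]{BeCuFerPe2018} and closed with a $\Box$. So there is no ``paper's own proof'' to compare against, and your proposal should be judged on its own merits.

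Your $(a)\Rightarrow(b)$ is clean and complete. The identification $F\cap X=(\{u\}_{\prime,X^*})_{\prime,X}$, together with weak$^*$-density and the support-tripotent argument giving $\varphi=\varphi P_2(u)$, yields $\{u\}_{\prime,X^*}=\bigl((\{u\}_{\prime,X^*})_{\prime,X}\bigr)^{\prime,X^*}$ directly, and then \cite[Theorem 4.2]{EdRu96} finishes. This is exactly the right use of the Edwards--R\"uttimann criterion.

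For $(b)\Rightarrow(a)$ you have correctly located the obstruction, but the sketch you give does not close it. The issue is precisely the one you flag at the end: you produce weak$^*$-approximants in $F\cap X$ to $u(a)$ and to $r(a)$, and you propose to add elements of $X$ orthogonal to $a$; but $X^{**}_0(u(a))$ is in general strictly larger than the weak$^*$-closure of $\{x\in X:x\perp a\}$ (the latter sits in $X^{**}_0(r(a))$, not $X^{**}_0(u(a))$), so ``adding orthogonal elements of $X$'' cannot reach all of $u(a)+\mathcal{B}_{X^{**}_0(u(a))}$. The phrase ``weak$^*$-density of the hereditary subspace $X\cap X^{**}_0(v)$ in $\mathcal{B}_{X^{**}_0(v)}$ for a closed tripotent $v$'' is applied to the wrong tripotent: $u(a)$ is compact (hence closed), but the relevant Peirce-0 space is governed by the complementary \emph{open} tripotent, and it is that openness which gives the required density. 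Similarly, the passage from the compact-$G_\delta$ case to general compact $u$ via $F=\bigvee_\lambda F_\lambda$ presupposes that the weak$^*$-closure of $\bigcup_\lambda(F_\lambda\cap X)$ is a face, which is again the very point at issue. In short, your outline identifies the right tools (triple functional calculus, open/closed tripotents, the Edwards--R\"uttimann facial calculus) and the right direction of attack, but the actual argument in \cite{BeCuFerPe2018} requires a more careful interplay between compact tripotents and their open complements than your sketch provides; that is where the substantive content of the cited theorem lies.
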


We shall make use of the previous theorem to determine the norm-closed faces of the closed unit ball of a real JB$^*$-triple. 

\begin{theorem}\label{t facial structure norm-closed faces in a real JB-triple} Let $\tau$ be a conjugation on a JB$^*$-triple $X$, and let $E= X^{\tau}$. Then for each norm-closed proper face $F$ of $\mathcal{B}_{E}$ there exists a unique compact tripotent $u\in E^{**}$ satisfying $F = (u+\mathcal{B}_{E^{**}_0(u)})\cap E$. Furthermore, the mapping $$ u \mapsto
(\{u\}_{\prime,E^*})_{\prime,E} = (u + \mathcal{B}_{E^{**}_0 (u)})\cap E$$ is an anti-order isomorphism from $\tilde{\mathcal U}_c(E^{**})$ onto $\mathcal{F}_{n}(\mathcal{B}_{E})$.
\end{theorem}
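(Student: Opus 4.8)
The plan is to transfer the complex result (Theorem~\ref{t characterization relatively open weak* closed faces} together with \cite[Corollaries 3.11 and 3.12]{EdFerHosPe2010}) across the conjugation $\tau$, using the machinery developed in this section. First I would recall the standing setup: $E=X^\tau$, $P(x)=\frac12(x+\tau(x))$ is the contractive real linear projection of $X$ onto $E$, and by \eqref{eq lifting faces} a subset $F\subseteq\mathcal{B}_E$ is a norm-closed face of $\mathcal{B}_E$ if and only if $\mathfrak{F}:=P^{-1}(F)\cap\mathcal{B}_X$ is a norm-closed face of $\mathcal{B}_X$. So given a proper norm-closed face $F$ of $\mathcal{B}_E$, the lifted set $\mathfrak{F}$ is a proper norm-closed face of $\mathcal{B}_X$, and the complex theory of \cite{EdFerHosPe2010} produces a unique compact tripotent $v\in X^{**}$ with $\mathfrak{F}=(v+\mathcal{B}_{X^{**}_0(v)})\cap X=(\{v\}_{\prime,X^*})_{\prime,X}$.

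The key step is to show $v$ is $\tau^{\sharp\sharp}$-symmetric, hence (by the remarks before Lemma~\ref{l ER real faces 3.6} and the identification $\mathcal{U}(E^{**})=\mathcal{U}(X^{**})^{\tau^{\sharp\sharp}}$) a tripotent of $E^{**}$, which by Corollary~\ref{c tripotent second dual is compact iff it is compact in the complexification} is then compact in $E^{**}$. For this I would use that $\mathfrak{F}$ is $\tau$-invariant: indeed $\tau(\mathfrak{F})=\tau(P^{-1}(F))\cap\mathcal{B}_X=P^{-1}(F)\cap\mathcal{B}_X=\mathfrak{F}$, since $P\tau=P$ and $\tau$ is an isometry. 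Now $\mathfrak{F}=(\{v\}_{\prime,X^*})_{\prime,X}$, so the associated weak$^*$-closed face $(\{v\}_{\prime,X^*})^{\prime,X^{**}}$ of $\mathcal{B}_{X^{**}}$ equals the bidual-closure of $\mathfrak{F}$ (via Theorem~\ref{t characterization relatively open weak* closed faces}: this is the weak$^*$-closed face $v+\mathcal{B}_{X^{**}_0(v)}$, which is open relative to $X$ with $X$-part weak$^*$-dense in it, and $\mathfrak{F}=(v+\mathcal{B}_{X^{**}_0(v)})\cap X$). Since $\tau^{\sharp\sharp}$ is a weak$^*$-continuous conjugate linear triple automorphism of $X^{**}$ that leaves $\mathfrak{F}$ invariant, it leaves the weak$^*$-closure of $\mathfrak{F}$ invariant, i.e. $\tau^{\sharp\sharp}(v+\mathcal{B}_{X^{**}_0(v)})=v+\mathcal{B}_{X^{**}_0(v)}$; by uniqueness of the compact tripotent associated with a proper open weak$^*$-closed face (Theorem~\ref{t characterization relatively open weak* closed faces}) we conclude $\tau^{\sharp\sharp}(v)=v$, so $u:=v\in E^{**}$ is a compact tripotent.

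It remains to identify $F$ in terms of $u$ inside $E$ and to establish the anti-order isomorphism. For the description, Lemma~\ref{l ER real faces 3.6} gives $\{u\}_{\prime,E^*}=\{u\}_{\prime,X^*}$, and then $(\{u\}_{\prime,E^*})_{\prime,E}=(\{u\}_{\prime,X^*})_{\prime,X}\cap E=\mathfrak{F}\cap E$; since $F=P(\mathfrak{F})=\mathfrak{F}\cap E$ (the last equality because $\mathfrak{F}$ is $\tau$-invariant and convex, so $P$ maps it into itself, while $F\subseteq\mathfrak{F}\cap E$ is clear and the reverse follows from $P|_E=\mathrm{id}$), we get $F=(\{u\}_{\prime,E^*})_{\prime,E}=(u+\mathcal{B}_{E^{**}_0(u)})\cap E$, the last identity from $E^{**}_0(u)=(X^{**}_0(v))^{\tau^{\sharp\sharp}}$ (Peirce projections commute with $\tau^{\sharp\sharp}$ since $\tau^{\sharp\sharp}$ is a triple automorphism fixing $u$). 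Uniqueness of $u$ follows from uniqueness of $v$ in the complex picture. For the lattice statement, injectivity and order-reversal of $u\mapsto(\{u\}_{\prime,E^*})_{\prime,E}$ are inherited from the complex case via the embedding $\tilde{\mathcal U}_c(E^{**})\hookrightarrow\tilde{\mathcal U}_c(X^{**})$ (which is order-preserving, and which lands in the $\tau^{\sharp\sharp}$-fixed compact tripotents by Corollary~\ref{c tripotent second dual is compact iff it is compact in the complexification}); surjectivity onto $\mathcal{F}_n(\mathcal{B}_E)$ is exactly the existence part just proved, with the trivial face $\{0\}_{\prime,E}=\mathcal{B}_E$ corresponding to the adjoined largest element and the proper faces to genuine compact tripotents. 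I expect the main obstacle to be the bookkeeping in the previous paragraph — rigorously matching the weak$^*$-closure of $\mathfrak{F}$ in $X^{**}$ with the Peirce-$0$ translate $v+\mathcal{B}_{X^{**}_0(v)}$ and then pushing the $\tau^{\sharp\sharp}$-invariance through the uniqueness clause of Theorem~\ref{t characterization relatively open weak* closed faces} — rather than anything genuinely deep; all the structural input (compactness transfers, Lemma~\ref{l ER real faces 3.6}, the real weak$^*$-closed face description of \cite{EdRutt2001}) is already in place.
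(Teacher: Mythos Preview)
Your proposal is correct and follows essentially the same route as the paper: lift $F$ to the $\tau$-invariant face $\mathfrak{F}=P^{-1}(F)\cap\mathcal{B}_X$, apply \cite[Corollary 3.12]{EdFerHosPe2010} to obtain the unique compact tripotent $v\in X^{**}$, use Theorem~\ref{t characterization relatively open weak* closed faces} to identify $v+\mathcal{B}_{X^{**}_0(v)}$ with $\overline{\mathfrak{F}}^{w^*}$ and thereby deduce $\tau^{\sharp\sharp}(v)=v$ from $\tau$-invariance of $\mathfrak{F}$, and finish with Corollary~\ref{c tripotent second dual is compact iff it is compact in the complexification} and the identification $F=\mathfrak{F}\cap E$. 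Your write-up is in fact somewhat more detailed than the paper's on the final lattice bookkeeping, but the argument is the same.
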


\begin{proof} Suppose $F$ is a norm-closed proper face of $\mathcal{B}_{E}$. Let $P= \frac{1}{2}(Id_X+\tau)$. Then $P$ is a contractive real linear projection on $X$ whose image is $E$. By \eqref{eq lifting faces}, the set $\mathfrak{F}:=P^{-1}(F)\cap \mathcal{B}_X$ is a norm-closed proper face of $\mathcal{B}_{X}$. It is not hard to check that, since $P(\tau(x))= P(x)$, for all $x\in X$, we have $\tau(\mathfrak{F}) =\mathfrak{F}$. By \cite[Corollary 3.12]{EdFerHosPe2010} there exists a unique compact tripotent $u\in X^{**}$ satisfying $\mathfrak{F} =
(\{u\}_{\prime,X^*})_{\prime,X} = (u + \mathcal{B}_{X^{**}_0 (u)})\cap X$. An application of Theorem \ref{t characterization relatively open weak* closed faces} guarantees that $$\tau^{\sharp\sharp} (u) + \mathcal{B}_{X^{**}_0 (\tau^{\sharp\sharp} (u))} = \tau^{\sharp\sharp} (u + \mathcal{B}_{X^{**}_0 (u)}) = \tau^{\sharp\sharp} (\overline{\mathfrak{F}}^{w^*}) = \overline{\tau(\mathfrak{F})}^{w^*} = \overline{\mathfrak{F}}^{w^*} = u + \mathcal{B}_{X^{**}_0 (u)}.$$ A new application of \cite[Corollary 3.12]{EdFerHosPe2010}, implies that $\tau^{\sharp\sharp} (u) = u\in E^{**}$. Corollary \ref{c tripotent second dual is compact iff it is compact in the complexification} shows that $u$ is compact in $E^{**}$, and it is not hard to check that $F = \mathfrak{F}\cap E = \mathfrak{F}^{\tau} = (u + \mathcal{B}_{E^{**}_0 (u)})\cap E,$ as desired. The rest is clear.

\end{proof}

We can now prove the main goal of this subsection which is a tool required for latter purposes.

\begin{theorem}\label{t characterization relatively open weak* closed faces real case} Let $\tau$ be a conjugation on a JB$^*$-triple $X$, and let $E= X^{\tau}$. Suppose $F$ is a proper weak$^*$-closed face of the closed unit ball of $E^{**}$. Then the following statements are equivalent: \begin{enumerate}[$(a)$]\item $F$ is open relative to $E$, that is, $F\cap E$ is weak$^*$-dense in $F$;
\item $F$ is a weak$^*$-closed face associated with a non-zero compact tripotent in $E^{**}$, that is, there exists a unique non-zero compact tripotent $u$ in $E^{**}$ satisfying $F= F_u^{E^{**}} =u + \mathcal{B}_{E^{**}_0(u)}$.
\end{enumerate}
\end{theorem}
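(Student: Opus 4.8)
The plan is to reduce the real statement to its already-proved complex counterpart, Theorem \ref{t characterization relatively open weak* closed faces}, by lifting the face $F$ along the contractive real linear projection $P=\frac12(Id_X+\tau)$ and exploiting the symmetry under $\tau^{\sharp\sharp}$, exactly in the spirit of the proof of Theorem \ref{t facial structure norm-closed faces in a real JB-triple}. First I would set up the identifications from the excerpt: $E^{**}=(X^{**})^{\tau^{\sharp\sharp}}$ sits weak$^*$-closed in $X^{**}$, the weak$^*$-topology of $E^{**}$ is the restriction of that of $X^{**}$, and $\tilde{\mathcal U}(E^{**})$ is a sub-complete lattice of $\tilde{\mathcal U}(X^{**})$. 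Given a proper weak$^*$-closed face $F$ of $\mathcal{B}_{E^{**}}$, I would first invoke the real Edwards--R\"uttimann description \cite[Theorem 3.9]{EdRutt2001}: there is a unique tripotent $u\in E^{**}$ with $F=u+\mathcal{B}_{E^{**}_0(u)}$. Thus both $(a)$ and $(b)$ are statements about this particular $u$, and the task is to show $F$ is open relative to $E$ if and only if $u$ is compact in $E^{**}$.

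For $(b)\Rightarrow(a)$: assume $u$ is a non-zero compact tripotent in $E^{**}$. By Corollary \ref{c tripotent second dual is compact iff it is compact in the complexification}, $u$ is also compact in $X^{**}$, so Theorem \ref{t characterization relatively open weak* closed faces} applied to $X$ gives that the complex face $F_u^{X^{**}}:=u+\mathcal{B}_{X^{**}_0(u)}$ is open relative to $X$, i.e.\ $F_u^{X^{**}}\cap X$ is weak$^*$-dense in $F_u^{X^{**}}$. Now I would use that $X^{**}_0(u)$ is a Peirce subspace associated with a $\tau^{\sharp\sharp}$-symmetric tripotent, hence $\tau^{\sharp\sharp}$-invariant, and that $E^{**}_0(u)=(X^{**}_0(u))^{\tau^{\sharp\sharp}}=P^{\sharp\sharp}(X^{**}_0(u))$ where $P^{\sharp\sharp}=\frac12(Id_{X^{**}}+\tau^{\sharp\sharp})$ is weak$^*$-continuous and contractive; similarly $F_u^{E^{**}}=P^{\sharp\sharp}(F_u^{X^{**}})$ and $E=P(X)$. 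Applying the weak$^*$-continuous contraction $P^{\sharp\sharp}$ to the weak$^*$-dense inclusion $F_u^{X^{**}}\cap X\subseteq F_u^{X^{**}}$ (noting $P^{\sharp\sharp}|_X=P$ sends $X$ into $E$ and $F_u^{X^{**}}\cap X$ into $F_u^{X^{**}}\cap E$, using $\tau$-invariance of $F_u^{X^{**}}$ and the fact that $\frac12(x+\tau(x))$ is a norm-one element of the face whenever $x$ is, as in the proof of Proposition \ref{p characterization of compact tripotents by semi-exposed faces}) yields that $F_u^{E^{**}}\cap E$ is weak$^*$-dense in $F_u^{E^{**}}=F$, which is $(a)$.

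For $(a)\Rightarrow(b)$: assume $F=u+\mathcal{B}_{E^{**}_0(u)}$ is open relative to $E$. I would lift to $X^{**}$ by considering $\mathfrak{F}:=(P^{\sharp\sharp})^{-1}(F)\cap\mathcal{B}_{X^{**}}$, the analogue of \eqref{eq lifting faces} at the level of second duals, which is a proper weak$^*$-closed $\tau^{\sharp\sharp}$-invariant face of $\mathcal{B}_{X^{**}}$; by the complex Edwards--R\"uttimann theorem it equals $F_w^{X^{**}}=w+\mathcal{B}_{X^{**}_0(w)}$ for a unique tripotent $w\in X^{**}$, and $\tau^{\sharp\sharp}$-invariance of $\mathfrak{F}$ forces $\tau^{\sharp\sharp}(w)=w$, so $w\in E^{**}$; tracing through the identifications $\mathfrak{F}\cap E=F$ and uniqueness in \cite[Theorem 3.9]{EdRutt2001} gives $w=u$. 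It then remains to show $\mathfrak{F}$ is open relative to $X$, for then Theorem \ref{t characterization relatively open weak* closed faces} makes $u$ compact in $X^{**}$ and Corollary \ref{c tripotent second dual is compact iff it is compact in the complexification} makes $u$ compact in $E^{**}$, which is $(b)$. For this density statement I would argue that $\mathfrak{F}=\overline{F}^{w^*}$ (weak$^*$-closure taken in $X^{**}$): indeed $F\subseteq\mathfrak{F}$ and $\mathfrak{F}$ is weak$^*$-closed, while conversely any $z\in\mathfrak{F}$ satisfies $P^{\sharp\sharp}(z)\in F$, and writing $z=P^{\sharp\sharp}(z)+\frac12(z-\tau^{\sharp\sharp}(z))$ with the second summand in $\mathcal{B}_{X^{**}_0(u)}$ and purely ``imaginary'', one checks $z$ lies in the weak$^*$-closure of $F$ by approximating the imaginary part from $i\,E^{**}_0(u)$-elements that are themselves weak$^*$-limits of elements of $E\subseteq X$ (using $(a)$); hence $\mathfrak{F}\cap X\supseteq F\cap E$ is weak$^*$-dense in $\mathfrak{F}$.

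The main obstacle I anticipate is precisely the last density argument in $(a)\Rightarrow(b)$: passing from weak$^*$-density of $F\cap E$ in $F$ inside $E^{**}$ to weak$^*$-density of $\mathfrak{F}\cap X$ in $\mathfrak{F}$ inside $X^{**}$ requires care with the ``imaginary'' Peirce-zero directions, since $\mathfrak{F}$ is genuinely larger than $F$ (its Peirce-zero slice is $\mathcal{B}_{X^{**}_0(u)}$, not $\mathcal{B}_{E^{**}_0(u)}$). The cleanest route is probably to avoid reconstructing density by hand and instead characterize compactness through semi-exposedness: combine Proposition \ref{p characterization of compact tripotents by semi-exposed faces} (compactness of $u$ in $E^{**}$ $\iff$ $\{u\}_{\prime,E^*}$ weak$^*$-semi-exposed in $\mathcal{B}_{E^*}$) with the real facial calculus $(\{u\}_{\prime,E^*})_{\prime,E}=(u+\mathcal{B}_{E^{**}_0(u)})\cap E=F\cap E$ from Theorem \ref{t facial structure norm-closed faces in a real JB-triple}, so that $(a)$ — which says $F\cap E$ is ``large'' in $F$ — can be converted directly into the semi-exposedness identity $((\{u\}_{\prime,E^*})_{\prime,E})^{\prime,E^*}=\{u\}_{\prime,E^*}$ via a weak$^*$-density/bipolar argument, bypassing the complexification entirely for this implication.
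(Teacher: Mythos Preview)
Your overall strategy---lift $F$ to the $\tau^{\sharp\sharp}$-invariant face $\mathfrak{F}=(P^{\sharp\sharp})^{-1}(F)\cap\mathcal{B}_{X^{**}}$, show that $F$ is open relative to $E$ if and only if $\mathfrak{F}$ is open relative to $X$, and then combine Theorem~\ref{t characterization relatively open weak* closed faces} with Corollary~\ref{c tripotent second dual is compact iff it is compact in the complexification}---is exactly the paper's approach. The paper compresses the middle step into the single phrase ``it is not hard to check that $\overline{\mathfrak{F}\cap X}^{w^*}=\mathfrak{F}$ if and only if $\overline{F\cap E}^{w^*}=F$'', so you are right to isolate it as the crux.

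Your first proposed justification of $(a)\Rightarrow(b)$ contains a real error, though. You assert $\mathfrak{F}=\overline{F}^{w^*}$ in $X^{**}$, but this is false: $F=u+\mathcal{B}_{E^{**}_0(u)}$ is already weak$^*$-closed in $X^{**}$ (because $E^{**}$ is), while $\mathfrak{F}=u+\mathcal{B}_{X^{**}_0(u)}$ is strictly larger whenever $X^{**}_0(u)\neq E^{**}_0(u)$. In particular the weak$^*$-closure of $F\cap E$ in $X^{**}$ is only $F$, and the ``approximate the imaginary part from $iE^{**}_0(u)$ via $(a)$'' step cannot work: those elements are not in $E^{**}$, so $(a)$ says nothing about them. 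The point is that $\mathfrak{F}\cap X$ is genuinely larger than $F\cap E$, and one must exploit that extra room.

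A clean way to fill the gap, staying inside the lifting picture: since $F\cap E\neq\emptyset$, the set $\mathfrak{F}\cap X$ is a nonempty $\tau$-invariant norm-closed face of $\mathcal{B}_X$; by \cite{EdFerHosPe2010} it equals $(w+\mathcal{B}_{X^{**}_0(w)})\cap X$ for a compact tripotent $w\in X^{**}$, and by Theorem~\ref{t characterization relatively open weak* closed faces} its weak$^*$-closure is exactly $w+\mathcal{B}_{X^{**}_0(w)}$. Invariance forces $w\in E^{**}$. From $\overline{\mathfrak{F}\cap X}^{w^*}\subseteq\mathfrak{F}$ one gets $w\geq u$, while $(a)$ gives $F=\overline{F\cap E}^{w^*}=\overline{(\mathfrak{F}\cap X)\cap E}^{w^*}\subseteq w+\mathcal{B}_{E^{**}_0(w)}$, hence $u\geq w$. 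Thus $u=w$ is compact in $X^{**}$ and Corollary~\ref{c tripotent second dual is compact iff it is compact in the complexification} finishes. Your alternative semi-exposedness route may also work, but as written you have not supplied the bridge from ``$F\cap E$ weak$^*$-dense in $F$'' to the bipolar identity $((\{u\}_{\prime,E^*})_{\prime,E})^{\prime,E^*}=\{u\}_{\prime,E^*}$.
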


\begin{proof} Let $P= \frac{1}{2}(Id_X+\tau^{\sharp\sharp})$. Then $P$ is a contractive weak$^*$-continuous real linear projection on $X^{**}$ whose image is $E^{**}$. It is shown in \cite[Theorem 3.9]{EdRutt2001} shows that $F\subseteq \mathcal{B}_{E^{**}}$ is a proper weak$^*$-closed face if and only if $\mathfrak{F}:=P^{-1}(F)\cap \mathcal{B}_{X^{**}}$ is a proper weak$^*$-closed face of $\mathcal{B}_{X^{**}}$. Since $\mathfrak{F}$ is $\tau^{\sharp\sharp}$-symmetric and $F = \mathfrak{F}^{\tau^{\sharp\sharp}}\cap E$, it is not hard to check that $\overline{\mathfrak{F}\cap X}^{w^*}  = \mathfrak{F}$ if and only if $\overline{{F}\cap E}^{w^*}  = {F}$. Therefore the desired equivalence is a consequence of Theorem \ref{t characterization relatively open weak* closed faces} and Corollary \ref{c tripotent second dual is compact iff it is compact in the complexification}.
\end{proof}

It remains to determine the weak$^*$-closed faces in the closed unit ball of the dual space of a  real JB$^*$-triple.

\begin{theorem}\label{t weak* closed face in the dual space real} Let $\tau$ be a conjugation on a JB$^*$-triple $X$, and let $E= X^{\tau}$. Then for each weak$^*$-closed proper face $F$ of $\mathcal{B}_{E^*}$ there exists a unique compact tripotent $u\in E^{**}$ satisfying $F =\{u\}_{\prime,E^*}$. Furthermore, the mapping $$ u \mapsto \{u\}_{\prime,E^*}$$ is an order isomorphism from $\tilde{\mathcal U}_c (E^{**})$ onto $\mathcal{F}_{w^*}(\mathcal{B}_{E})$.
\end{theorem}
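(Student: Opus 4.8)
The plan is to imitate, one level higher, the proof of Theorem~\ref{t facial structure norm-closed faces in a real JB-triple}. First I would recall that $\tau^{\sharp}(\varphi)(x)=\overline{\varphi(\tau(x))}$ is a \emph{weak$^*$-continuous} conjugation on $X^{*}$, that restriction identifies $(X^{*})^{\tau^{\sharp}}$ isometrically and weak$^*$-homeomorphically with $E^{*}$, and hence that $P=\frac12(\mathrm{Id}_{X^{*}}+\tau^{\sharp})$ is a weak$^*$-continuous contractive real linear projection of $X^{*}$ onto $E^{*}$, with $P(\mathcal{B}_{X^*})=\mathcal{B}_{E^*}$ and with $E^{*}=\ker(\tau^{\sharp}-\mathrm{Id}_{X^{*}})$ weak$^*$-closed in $X^{*}$. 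Given a weak$^*$-closed proper face $F$ of $\mathcal{B}_{E^*}$, put $\mathfrak{F}:=P^{-1}(F)\cap\mathcal{B}_{X^*}$. The face/properness computations are identical to those behind \eqref{eq lifting faces}, weak$^*$-closedness of $\mathfrak{F}$ follows from the weak$^*$-continuity of $P$, and $P\circ\tau^{\sharp}=P$ (as $\tau^{\sharp}$ has period $2$) shows that $\mathfrak{F}$ is $\tau^{\sharp}$-symmetric with $F=\mathfrak{F}\cap E^{*}$. So $\mathfrak{F}$ is a $\tau^{\sharp}$-symmetric weak$^*$-closed proper face of $\mathcal{B}_{X^*}$.

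By \cite{FerPe10} there is a unique compact tripotent $u\in X^{**}$ with $\mathfrak{F}=\{u\}_{\prime,X^*}$. The identity $\tau^{\sharp}(\{v\}_{\prime,X^*})=\{\tau^{\sharp\sharp}(v)\}_{\prime,X^*}$, which follows from $\tau^{\sharp}(\varphi)(\Phi)=\overline{\varphi(\tau^{\sharp\sharp}(\Phi))}$ for $\varphi\in X^{*}$ and $\Phi\in X^{**}$, together with $\tau^{\sharp}(\mathfrak{F})=\mathfrak{F}$, forces $\{\tau^{\sharp\sharp}(u)\}_{\prime,X^*}=\{u\}_{\prime,X^*}$, and the uniqueness clause in \cite{FerPe10} then yields $\tau^{\sharp\sharp}(u)=u$. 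Hence $u\in\mathcal{U}(E^{**})$, and $u$ is compact in $E^{**}$ by Corollary~\ref{c tripotent second dual is compact iff it is compact in the complexification}. Finally, Lemma~\ref{l ER real faces 3.6} gives $F=\mathfrak{F}\cap E^{*}=\{u\}_{\prime,X^*}\cap E^{*}=\{u\}_{\prime,E^*}$, and the uniqueness of $u$ among the compact tripotents of $E^{**}$ follows by combining Lemma~\ref{l ER real faces 3.6} with the uniqueness in the complex setting.

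For the last assertion, Lemma~\ref{l ER real faces 3.6} exhibits $u\mapsto\{u\}_{\prime,E^*}$ as the restriction to $\tilde{\mathcal U}_c(E^{**})$ of the order isomorphism $u\mapsto\{u\}_{\prime,X^*}$ of \cite{FerPe10}. Since the order of $\tilde{\mathcal U}(E^{**})$ is inherited from $\tilde{\mathcal U}(X^{**})$, and compactness of a tripotent in $E^{**}$ coincides with compactness in $X^{**}$ (Corollary~\ref{c tripotent second dual is compact iff it is compact in the complexification}), this restriction is an order embedding; it is onto $\mathcal{F}_{w^*}(\mathcal{B}_{E^*})$ by the first part (using that $\{u\}_{\prime,E^*}=\{u\}_{\prime,X^*}\cap E^{*}$ is a weak$^*$-closed face of $\mathcal{B}_{E^*}$ whenever $u$ is compact, since $\{u\}_{\prime,X^*}$ is weak$^*$-closed in $X^{*}$ and so is $E^{*}$), hence it is an order isomorphism. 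I expect the only delicate points to be bookkeeping ones: checking that the lifting $F\mapsto\mathfrak{F}$ preserves weak$^*$-closedness — which is exactly where the weak$^*$-continuity of $\tau^{\sharp}$ enters — and extracting the $\tau^{\sharp\sharp}$-symmetry of the representing tripotent from the uniqueness in \cite{FerPe10}; the rest is a direct transcription of the complex arguments.
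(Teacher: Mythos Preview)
Your proof is correct and follows essentially the same approach as the paper: lift $F$ to a $\tau^{\sharp}$-symmetric weak$^*$-closed face $\mathfrak{F}\subset\mathcal{B}_{X^*}$ via the weak$^*$-continuous projection $\frac12(\mathrm{Id}+\tau^{\sharp})$, apply \cite{FerPe10} to obtain a compact tripotent $u\in X^{**}$, deduce $\tau^{\sharp\sharp}(u)=u$ from uniqueness, invoke Corollary~\ref{c tripotent second dual is compact iff it is compact in the complexification} for compactness in $E^{**}$, and conclude with Lemma~\ref{l ER real faces 3.6}. Your treatment of the final order-isomorphism claim is somewhat more explicit than the paper's, which simply stops after establishing $F=\{u\}_{\prime,E^*}$.
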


\begin{proof} As before, we set $P= \frac{1}{2}(Id_X+\tau)$ and $Q= \frac{1}{2}(Id_X+\tau^{\sharp})$. Then $P$ and $Q$ are contractive real linear projections on $X$ and $X^{*}$ whose images are $E$ and $E^{*}$, respectively, and $Q$ is weak$^*$-continuous. The set $F$ is a weak$^*$-closed proper face of $\mathcal{B}_{E^*}$ if and only if the set $\mathfrak{F}:=Q^{-1}(F)\cap \mathcal{B}_{X^*}$ is a weak$^*$-closed proper face of $\mathcal{B}_{X^*}$. By \cite[Theorem 2]{FerPe10} there exists a (unique) compact tripotent $u\in X^{**}$ satisfying $\mathfrak{F} = \{u\}_{\prime,X^*}$. Clearly, $\mathfrak{F}$ is $\tau^{\sharp}$-symmetric and $F = \mathfrak{F}^{\tau^{\sharp}}  = \mathfrak{F} \cap E^*$. We have commented in previous pages that $\tau$ and $\tau^{\sharp\sharp}$ are triple automorphisms on $X$ and $X^{**}$, respectively. Then, we can easily check that $$\{u\}_{\prime,X^*} = \mathfrak{F} =  \tau^{\sharp} \left( \mathfrak{F}\right) =  \tau^{\sharp} \left( \{u\}_{\prime,X^*} \right) =  \{\tau^{\sharp\sharp} \left( u\right) \}_{\prime,X^*},$$ witnessing that $\tau^{\sharp\sharp} \left( u\right) = u$. Corollary \ref{c tripotent second dual is compact iff it is compact in the complexification} proves that $u$ is a compact tripotent in $E^{**}$. Finally, $F = \mathfrak{F} \cap E^* = \{u\}_{\prime,E^*}.$
\end{proof}

\section{New spaces satisfying a Krein--Milman type theorem}\label{sec: 3}

A convex subset $\mathcal{K}$ of a normed space $X$ is called a \emph{convex body} if it has non-empty interior in $X$. The Mazur--Ulam theorem establishes that every surjective isometry between normed spaces is always affine. In \cite{Mank1972} P. Mankiewicz extended this result by showing that any surjective isometry defined between convex bodies in two arbitrary normed spaces is the restriction of a unique affine isometry between the whole spaces. Mankiewicz's theorem has become a fundamental tool for researchers working on positive solutions to Tingley's problem or on new Banach spaces satisfying the Mazur--Ulam property.\smallskip

In relation with these questions, M. Mori and N. Ozawa have recently contributed by introducing a new point of view %and bringing a new technical achievement to the table
(see \cite{MoriOza2018}). Following the just quoted authors, we shall say that a convex subset $\mathcal{K}$ of a normed space $X$ satisfies the \emph{strong Mankiewicz property} if every surjective isometry $\Delta$ from $\mathcal{K}$ onto an arbitrary convex subset $L$ in a normed space $Y$ is affine. Every convex subset of a strictly convex normed space satisfies the strong Mankiewicz property because it is uniquely geodesic (see \cite[Lemma 6.1]{BadFurGeMo2007}), and there exist examples of convex subsets of $L^1[0, 1]$ which do not satisfy this property (see \cite[Example 5]{MoriOza2018}). %Let us observe that R. Haydon found in \cite{Haydon2001} some examples of compact spaces $K$ for which $C(K)$ does not admit strictly convex renormings.
In \cite[Theorem 2]{MoriOza2018} Mori and Ozawa establish the following variant of Mankiewicz's theorem.

\begin{theorem}\label{t Mori-Ozawa strong Mankiewicz}\cite[Theorem 2]{MoriOza2018} Let $X$ be a Banach space such that the closed convex hull of the extreme points, $\partial_e (\mathcal{B}_X),$ of the closed unit ball, $\mathcal{B}_X$, of $X$ has non-empty interior in $X$. Then, every convex body $\mathcal{K}\subset X$ has the strong Mankiewicz property. Furthermore, suppose $L$ is a convex subset of a normed space $Y$, and $\Delta : \mathcal{B}_{X}\to L$ is a surjective isometry. Then $\Delta$ can be uniquely extended to an affine isometry from $X$ onto a norm-closed subspace of $Y$.$\hfill\Box$
\end{theorem}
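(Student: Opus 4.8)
The plan is to extract the statement from a single midpoint--rigidity phenomenon and then read off the linear isometry by a routine argument. A surjective isometry between convex subsets of normed spaces is continuous, hence automatically affine once it preserves midpoints; so the entire content of the first assertion is that, given a convex body $\mathcal{K}\subseteq X$, a convex subset $L\subseteq Y$, and a surjective isometry $\Delta:\mathcal{K}\to L$, one has $\Delta\big(\tfrac{a+b}{2}\big)=\tfrac12\big(\Delta(a)+\Delta(b)\big)$ for all $a,b\in\mathcal{K}$. The last clause of the statement will then follow from a short algebraic argument.

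\emph{Midpoints in extreme directions.} For $a,b$ in a convex set $C$ put $\mathrm{Mid}_C(a,b)=\{z\in C:\|z-a\|=\|z-b\|=\tfrac12\|a-b\|\}$; a surjective isometry carries such a set onto the analogous one in the target. The first ingredient is purely metric: if $v\neq 0$, $\tfrac{v}{\|v\|}\in\partial_e(\mathcal{B}_X)$, and $m\pm v\in C$, then $\mathrm{Mid}_C(m-v,m+v)=\{m\}$. Indeed, for such a $z$ the triangle inequality $\|z-(m-v)\|+\|(m+v)-z\|\ge\|2v\|$ is an equality, so $\tfrac{v}{\|v\|}$ equals the midpoint of the two unit vectors $\tfrac1{\|v\|}(z-m+v)$ and $\tfrac1{\|v\|}(m+v-z)$; extremality forces both of them to coincide with $\tfrac{v}{\|v\|}$, whence $z=m$. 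Applying this with $C=\mathcal{K}$ and with $C=L$, and using that $\tfrac12\big(\Delta(m-v)+\Delta(m+v)\big)\in L$ is itself a metric midpoint of $\Delta(m-v)$ and $\Delta(m+v)$, we obtain
$$\Delta(m)=\tfrac12\big(\Delta(m-v)+\Delta(m+v)\big)\qquad\text{whenever } m\pm v\in\mathcal{K}\ \text{ and }\ \tfrac{v}{\|v\|}\in\partial_e(\mathcal{B}_X).$$
Thus $\Delta$ preserves every midpoint whose endpoints differ by a vector in an extreme direction; iterating along $m,\,m\pm\tfrac12 v,\,m\pm\tfrac14 v,\dots$ and invoking continuity shows, moreover, that $s\mapsto\Delta(m+sv)$ is affine for each such $v$ (in particular, in the case $\mathcal{K}=\mathcal{B}_X$ and after translating $L$ so that $\Delta(0)=0$, that $\Delta(te)=t\,\Delta(e)$ for $e\in\partial_e(\mathcal{B}_X)$ and $|t|\le 1$).

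\emph{From extreme directions to all midpoints.} This is the technical heart, and it is exactly where the hypothesis enters: a symmetric convex set with non-empty interior contains a ball, so $\overline{\mathrm{conv}}\,\partial_e(\mathcal{B}_X)\supseteq r\,\mathcal{B}_X$ for some $r>0$, whence $\overline{\mathrm{span}}\,\partial_e(\mathcal{B}_X)=X$ and every vector is, up to an arbitrarily small error, a finite convex combination $\sum_k\lambda_k e_k$ of extreme points with each $\lambda_k>0$ as small as desired. Fixing $a,b\in\mathcal{K}$ whose midpoint $c$ lies in $\mathrm{int}\,\mathcal{K}$ (the boundary case then follows by continuity), one writes $b-a$ in this form, builds polygonal paths from $c$ to $a$ and from $c$ to $b$ inside $\mathcal{K}$ whose successive increments lie in extreme directions and which stay within a prescribed $\delta$ of the segments $[c,a]$, $[c,b]$, chains the displayed identity along those paths, and lets the mesh and $\delta$ tend to $0$; the equality $\Delta(c)=\tfrac12\big(\Delta(a)+\Delta(b)\big)$ should then drop out. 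The delicate point — which I expect to be the main obstacle — is to organise this chaining: the displayed identity makes $\Delta$ affine along each single extreme direction $v$ through an interior point $m$, say $\Delta(m+tv)=\Delta(m)+t\,c_{m,v}$ for small $t$, and the isometry property forces $\|c_{m,v}-c_{m,w}\|=\|v-w\|$ for extreme $v,w$; but deducing that $\Delta$ is affine near $m$ requires that $c_{m,v}$ be independent of $m$, equivalently that the linear isometry on $X=\overline{\mathrm{span}}\,\partial_e(\mathcal{B}_X)$ assembled from the maps $c_{m,\cdot}$ be globally coherent. I would settle this by a connectedness/Baire-category argument on the open set $\{m\in\mathrm{int}\,\mathcal{K}:\Delta\text{ is affine near }m\}$, or, once sufficient local rigidity has been accumulated, by invoking Mankiewicz's classical theorem \cite{Mank1972} on a small closed ball around an interior point.

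\emph{Conclusion.} Granted midpoint preservation, $\Delta$ is affine; this is the strong Mankiewicz property for an arbitrary convex body. For the final clause, an affine surjective isometry $\Delta:\mathcal{B}_X\to L$ extends uniquely to an affine map $T$ of $X$ onto $\mathrm{aff}(L)$; its linear part is distance preserving on $2\mathcal{B}_X$, hence everywhere, so $T$ is an affine isometry and $\mathrm{aff}(L)=T(X)$, being the isometric image of the complete space $X$, is norm closed in $Y$. Uniqueness holds because two affine extensions agreeing on $\mathcal{B}_X$ agree on its affine hull, which is all of $X$. Finally, translating $L$ so that $\Delta(0)=0$ turns $T$ into a surjective linear isometry from $X$ onto the norm-closed subspace $T(X)$ of $Y$, which is the desired conclusion.
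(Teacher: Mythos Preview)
The paper does not prove this theorem: it is quoted verbatim from Mori--Ozawa \cite{MoriOza2018} and closed with a $\Box$. There is therefore no proof in the present paper to compare your attempt against.

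On the substance of your argument: the first step is correct and is, in fact, the key observation behind the original Mori--Ozawa proof. If $v/\|v\|\in\partial_e(\mathcal{B}_X)$ then the metric midpoint set $\mathrm{Mid}_{\mathcal{K}}(m-v,m+v)$ is the singleton $\{m\}$; since $\Delta$ is a surjective isometry it sends this onto $\mathrm{Mid}_L(\Delta(m-v),\Delta(m+v))$, and convexity of $L$ forces the latter to contain, hence equal, the genuine midpoint. So $\Delta$ preserves midpoints along extreme directions. This is exactly \cite[Lemma~4]{MoriOza2018}, and your derivation of it is clean.

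The gap is in your second step, and you yourself flag it. Knowing that $s\mapsto\Delta(m+sv)$ is affine for each extreme $v$ and each interior $m$ does not by itself give midpoint preservation for arbitrary $a,b$: your polygonal-path argument shows only that $\Delta(b)-\Delta(a)=\sum_k w_k$ for some increments $w_k$ with $\|w_k\|=\lambda_k$, not that $\Delta$ is affine on the segment $[a,b]$. The coherence problem you name --- that the local linear pieces $c_{m,\cdot}$ be independent of $m$ --- is real, and neither of the fixes you propose closes it. A Baire/connectedness argument on the set where $\Delta$ is locally affine presupposes that this set is non-empty, which is precisely what is at stake. And Mankiewicz's classical theorem cannot be invoked on a small closed ball $B\subset\mathcal{K}$, because $\Delta(B)$ is merely a convex subset of $Y$ with no a priori interior --- were this available, the strong Mankiewicz property would be trivial. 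Mori and Ozawa handle this step by a more careful iterated-midpoint argument that exploits the density of $\partial_e(\mathcal{B}_X)$-directions together with the isometry to control the increments uniformly; your sketch does not reproduce that control. The final clause (extending an affine $\Delta:\mathcal{B}_X\to L$ to a linear isometry onto a closed subspace) is routine and your treatment of it is fine.
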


By combining the previous result with the Russo--Dye theorem, Mori and Ozawa proved that every convex body in
unital C$^*$-algebra or in a real von Neumann algebra satisfies the strong Mankiewicz property (see \cite[Corollary 3]{MoriOza2018}). A deeper application of the facial structure of unital C$^*$-algebras leads Mori and Ozawa to a significant achievement in the study of the Mazur--Ulam property.

\begin{theorem}\label{t MO Mazur--Ulam unital Cstar algebras}{\rm\cite[Theorem 1]{MoriOza2018}} Every unital complex C$^*$-algebra {\rm(}as a real Banach space{\rm)} and every real von Neumann algebra has the Mazur--Ulam property.
\end{theorem}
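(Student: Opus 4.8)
The statement is quoted from \cite[Theorem 1]{MoriOza2018}; the plan below reproduces, in outline, the argument of Mori and Ozawa.

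\smallskip

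\emph{Reduction to the unit ball via the strong Mankiewicz property.} Let $A$ be a unital complex C$^*$-algebra, regarded as a real Banach space, or a real von Neumann algebra, and let $\Delta : S(A) \to S(Y)$ be a surjective isometry onto the unit sphere of a real Banach space $Y$. By the Russo--Dye theorem \cite{RuDye} (and its counterpart for real von Neumann algebras, i.e. the fact that $\mathcal{B}_A = \overline{\mathrm{co}}\,\mathcal{U}(A)$ in that setting) together with the fact that every unitary lies in $\partial_e(\mathcal{B}_A)$, we get $\overline{\mathrm{co}}\,\partial_e(\mathcal{B}_A) = \mathcal{B}_A$, which certainly has non-empty interior. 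By Theorem~\ref{t Mori-Ozawa strong Mankiewicz} every convex body in $A$ then has the strong Mankiewicz property, and, more to the point, the second assertion of that theorem shows that it is enough to extend $\Delta$ to a \emph{surjective} isometry $\widetilde{\Delta} : \mathcal{B}_A \to \mathcal{B}_Y$: such a $\widetilde{\Delta}$ is the restriction of an affine isometry $T : A \to Y$, which is surjective because $T(A) \supseteq \mathcal{B}_Y$ spans $Y$; writing $T = T_0 + T(0)$ with $T_0$ real linear, the identity $T_0(\mathcal{B}_A) = \mathcal{B}_Y - T(0)$ forces $T(0) = 0$, so $T = T_0$ is the desired extension.

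\smallskip

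\emph{Preservation of the facial structure.} The real work lies in producing $\widetilde{\Delta}$, and this is where the facial structure of $A$ enters. A surjective isometry between unit spheres carries each maximal convex subset of $S(A)$ — equivalently, each maximal proper norm-closed face of $\mathcal{B}_A$ — onto a maximal proper norm-closed face of $\mathcal{B}_Y$. Using the description of the norm-closed faces of $\mathcal{B}_A$ (\cite{AkPed92} in the C$^*$-case, and \cite{EdRutt88, EdRutt2001} via the real JBW$^*$-triple picture in the von Neumann case), every maximal proper face of $\mathcal{B}_A$ is, up to a translation, the closed unit ball of a Peirce-$0$ corner of $A$ whose unit ball is again the closed convex hull of its extreme points; hence that corner again falls under the hypothesis of Theorem~\ref{t Mori-Ozawa strong Mankiewicz}, so each such face has the strong Mankiewicz property and $\Delta$ is affine on it. Iterating this down the lattice of faces, $\Delta$ is compatible with the whole facial structure of $\mathcal{B}_A$, and in particular with the extreme points and with the norm-exposing functionals $\partial_e(\mathcal{B}_{A^*})$ that detect the faces.

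\smallskip

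\emph{Building the extension, and the real case.} Define $\widetilde{\Delta}(0) := 0$ and $\widetilde{\Delta}(x) := \|x\|\,\Delta\!\left(x/\|x\|\right)$ for $0 \neq x \in \mathcal{B}_A$. This is clearly a radial bijection onto $\mathcal{B}_Y$ preserving norms and collinear distances; the point is that it is an isometry, i.e. that $\|sa - tb\|_A = \|s\,\Delta(a) - t\,\Delta(b)\|_Y$ for all $a,b \in S(A)$ and all $s,t \in [0,1]$. This joint radial identity is forced by the facial compatibility from the previous paragraph: the norm $\|sa - tb\|$ is recovered from the evaluations of norm-exposing functionals on $a$ and $b$, and from the relative position of $a$, $b$ in the face lattice, all of which are preserved by $\Delta$. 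Once $\widetilde{\Delta} : \mathcal{B}_A \to \mathcal{B}_Y$ is known to be a surjective isometry, the first paragraph finishes the proof. For a real von Neumann algebra $A$ the whole scheme runs verbatim, using the real Russo--Dye statement and the real facial structure of \cite{EdRutt2001}, which again identifies maximal proper faces of $\mathcal{B}_A$ with (translates of) corner von Neumann algebras. The main obstacle is precisely the joint radial identity of this last paragraph: extracting from "preservation of maximal faces and of exposing functionals" enough metric information to pin down $\|sa-tb\|$ for \emph{all} pairs of scalars is exactly the step that consumes the fine structure of unital C$^*$-algebras and of real von Neumann algebras.
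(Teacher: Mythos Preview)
The paper does not prove this theorem at all: it is stated as \cite[Theorem 1]{MoriOza2018} and simply quoted, so there is no ``paper's own proof'' to compare against. That said, your sketch does not faithfully reproduce the Mori--Ozawa argument, and the third paragraph contains a genuine gap.

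Your plan is to define the positively homogeneous extension $\widetilde{\Delta}(x)=\|x\|\,\Delta(x/\|x\|)$ and then argue that it is an isometry on $\mathcal{B}_A$. But proving $\|sa-tb\|=\|s\Delta(a)-t\Delta(b)\|$ for all $s,t\in[0,1]$ is precisely Tingley's problem itself; your justification (``the norm $\|sa-tb\|$ is recovered from the evaluations of norm-exposing functionals on $a$ and $b$\ldots all of which are preserved by $\Delta$'') is not an argument but a restatement of what must be shown. Nothing in the preceding paragraphs establishes that $\Delta$ preserves the \emph{values} of exposing functionals, only that it carries maximal faces to maximal faces. Likewise, the phrase ``iterating this down the lattice of faces'' does not correspond to anything Mori and Ozawa actually do.

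The actual argument in \cite{MoriOza2018} is dual: one fixes a maximal face $F\subset S(A)$, picks $\psi\in S(Y^*)$ with $\psi^{-1}(\{1\})\cap\mathcal{B}_Y=\Delta(F)$, and proves---using that $\Delta$ is affine on each maximal face together with an approximate-unit construction (their Lemmas 17--20; compare Propositions \ref{p Delta is affine in maximal faces} and \ref{p 45 more detailed} in the present paper, which replicate the scheme)---that $\psi\circ\Delta$ coincides on \emph{all} of $S(A)$ with a bounded linear functional $\varphi\in A^*$. Since the resulting family of $\varphi$'s is norming, \cite[Lemma 6]{MoriOza2018} (equivalently \cite[Lemma 2.1]{FangWang06}) then yields the linear extension directly, without ever needing the homogeneous map $\widetilde{\Delta}$ to be an isometry in advance.
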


It is worth mentioning that concerning the strong Mankiewicz and the Mazur--Ulam properties, a version of the Mori-Ozawa theorem has been recently established in the wider setting of JBW$^*$-triples.

\begin{theorem}\label{t BCFP Mazur--Ulam JBW*-triples}\cite[Corollary 2.2, Theorem 4.14 and Proposition 4.15]{BeCuFerPe2018}
Every convex body in a JBW$^*$-triple satisfies the strong Mankiewicz property. Every JBW$^*$-triple which is not a Cartan factor of rank two satisfies the Mazur--Ulam property.
\end{theorem}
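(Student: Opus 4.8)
**Proof proposal for Theorem~\ref{t BCFP Mazur--Ulam JBW*-triples}** (as restated in this excerpt; the result is attributed to \cite{BeCuFerPe2018}).

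The plan is to treat the two assertions separately, since the first is an input to the second. For the \emph{strong Mankiewicz} statement I would invoke Theorem~\ref{t Mori-Ozawa strong Mankiewicz}: it suffices to show that in any JBW$^*$-triple $M$ the closed convex hull of $\partial_e(\mathcal{B}_M)$ has non-empty interior, in fact that it equals $\mathcal{B}_M$. The natural route is a Russo--Dye type theorem for JBW$^*$-triples. Fix a complete tripotent $e\in M$ (one exists because $M$ is a dual space, so $\mathcal{B}_M$ has extreme points, and these are exactly the complete tripotents). Then the Peirce-$2$ space $M_2(e)$ is a unital JBW$^*$-algebra with unit $e$, and by the classical Russo--Dye theorem for JB$^*$-algebras the closed convex hull of the unitaries of $M_2(e)$ is $\mathcal{B}_{M_2(e)}$. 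Since unitaries of the unital JB$^*$-algebra $M_2(e)$ are complete tripotents of $M$, and $\mathcal{B}_{M_2(e)}$ is cofinal in $\mathcal{B}_M$ in the sense that every norm-one element $x$ of $M$ can be approximated by elements whose range tripotent is majorised by some complete tripotent—more precisely one shows $\mathcal{B}_M=\overline{\bigcup\{\mathcal{B}_{M_2(u)}: u \text{ a complete tripotent}\}}$ using the fact that every element lies under a complete tripotent of its bidual and a weak$^*$-density/functional-calculus argument—one concludes $\overline{\mathrm{co}}(\partial_e(\mathcal{B}_M))=\mathcal{B}_M$. Applying Theorem~\ref{t Mori-Ozawa strong Mankiewicz} then gives the strong Mankiewicz property for every convex body in $M$.

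For the \emph{Mazur--Ulam} statement, let $M$ be a JBW$^*$-triple that is not a rank-two Cartan factor, and let $\Delta\colon S(M)\to S(Y)$ be a surjective isometry onto the unit sphere of a real Banach space $Y$. The strategy is the Mori--Ozawa scheme. First, using the strong Mankiewicz property just proved together with a Figiel-type argument, one reduces to showing that $\Delta$ is affine on each maximal proper norm-closed face of $\mathcal{B}_M$: such a face has the form $u+\mathcal{B}_{M_0(u)}$ for a minimal tripotent... rather, for a suitable compact/maximal-partial-isometry-type tripotent $u$, and crucially (this is where the rank hypothesis enters) one needs that the relevant face—or its associated Peirce-$0$ corner $M_0(u)$—is large enough to support a Russo--Dye / convex-body argument, i.e. that $M_0(u)$ is not itself a degenerate low-rank object. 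The hypothesis ``not a rank-two Cartan factor'' is exactly what guarantees that either $M$ has rank $\ge 3$, so that one can find maximal faces whose Peirce-$0$ parts are JBW$^*$-triples for which the previous paragraph applies, or $M$ has rank $1$, a case handled directly (rank-one JBW$^*$-triples are complex Hilbert spaces with their natural triple product, where the sphere is strictly convex and Tingley's problem is classical). One then glues the affine maps on the maximal faces: the maximal faces cover $S(M)$ (every norm-one element lies in one, by the facial description via compact tripotents), and on overlaps the local affine extensions agree because they are determined by their values on extreme points; this produces a real-linear isometry $T\colon M\to Y$ extending $\Delta$, with surjectivity following from surjectivity of $\Delta$ onto $S(Y)$ and homogeneity.

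The main obstacle is the rank-two Cartan factor exclusion and, correspondingly, the case analysis on the rank of $M$: one must show that when $M$ is neither rank one nor a rank-two Cartan factor, every point of $S(M)$ sits in a maximal proper face whose structure is rich enough (via a non-degenerate Peirce-$0$ corner or a sufficiently large Peirce-$2$ JB$^*$-algebra) to run the Russo--Dye/strong-Mankiewicz machinery, and then to verify that the locally-defined affine maps are mutually consistent so that they patch to a single global real-linear isometry. The rank-one case must be dispatched separately and cleanly, and the compatibility-on-overlaps verification, while conceptually routine, is the technically delicate heart of the gluing step. I would expect the detailed facial combinatorics—matching up the compact tripotents labelling the faces on the $M$ side with the corresponding faces on the $Y$ side, and controlling how $\Delta$ permutes them—to be where most of the work lies.
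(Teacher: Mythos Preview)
This theorem is \emph{not proved} in the present paper: it is stated with attribution to \cite{BeCuFerPe2018} and then used as a black box. So there is no ``paper's own proof'' to compare your proposal against, and your task here should simply have been to observe that the statement is quoted from an external source.

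That said, a few remarks on your sketch. For the first assertion, your argument is more roundabout than necessary: in a JBW$^*$-triple $M$ every element $a$ has a range tripotent $r(a)\in M$, and by Zorn's lemma $r(a)$ is dominated by some complete tripotent $e$; hence $a\in M_2(e)$ outright, and the Russo--Dye theorem for the unital JBW$^*$-algebra $M_2(e)$ already gives $a\in\overline{\mathrm{co}}(\partial_e\mathcal{B}_M)$. No approximation or ``cofinality'' step is needed.

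For the second assertion your outline has the right shape (Mori--Ozawa scheme, affineness on maximal faces, separate rank-one treatment), but two points are off. First, maximal proper faces of $\mathcal{B}_M$ correspond to \emph{minimal} tripotents $u$ under the anti-order isomorphism $u\mapsto u+\mathcal{B}_{M_0(u)}$; your hesitation between ``minimal'' and ``compact/maximal-partial-isometry-type'' is a genuine confusion. Second, the endgame is not a direct ``gluing of local affine maps on overlaps'' but rather the support-functional argument (as in \cite[Lemma~6]{MoriOza2018} or \cite{FangWang06}): one shows that for each minimal tripotent $u$ and each $\psi\in\operatorname{supp}_\Delta(u)$ the composition $\psi\circ\Delta$ agrees on $S(M)$ with a fixed bounded linear functional, and then invokes a norming family to conclude. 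The rank hypothesis enters precisely in producing, inside each maximal face, enough extreme points of $\mathcal{B}_M$ to run this identification; that is where the rank-$\ge 3$ versus rank-$2$ Cartan factor distinction bites, and your account of this point is too vague to count as a proof strategy.
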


The previous two theorems reveal the noticeable applicability of Theorem \ref{t Mori-Ozawa strong Mankiewicz} in the study of those problems asking for extension of isometries between the spheres of two Banach spaces. This powerful tool is limited to those Banach spaces whose closed unit ball coincides with the closed convex hull of its extreme points. For this reason, we survey some forerunners where the latter property has been studied.\smallskip

W.G. Bade proved that $\hbox{co}(\partial_{e}\mathcal{B}_{C(K,\RR)})$ is dense in the closed unit ball of the space ${C(K,\RR)}$, of all real-valued continuous functions on a compact Hausdorff space $K$, if and only if $K$ is totally disconnected (see \cite{Bade57}). % and \cite{Goodner64}).
The complex case was considered by R.R. Phelps in \cite{Phelps65}, where he showed that the closed unit ball of the commutative unital C$^*$-algebra $C(K)$ coincides with the closed convex hull of its extreme points. Since the extreme points of the closed unit ball of $C(K)$ are precisely the unitary elements in $C(K)$, Phelps provided in fact a particular case of the celebrated Russo--Dye theorem (\cite{RuDye}), which states that the closed unit ball of any unital C$^*$-algebra agrees with the closed convex hull of its unitary elements.\smallskip

When the complex field is replaced with a general Banach space $X$ with dim$(X)\geq 3$, the notion of unitary element does not make any sense in the space $C(K,X),$ of all $X$-valued continuous functions on $K$. In the setting of $C(K,X)$ spaces the problem of determining whether its closed unit ball coincides with the closed convex hull of its extreme points was explored by authors like J. Cantwell \cite{Cantwell68}, N.T. Peck \cite{Peck1968}, J.F. Mena-Jurado, J.C. Navarro-Pascual and V.I. Bogachev \cite{MeNav1995,BoMeNav}. %We highlight too the Krein--Milman type theorem for von Neumann algebras in \cite[Theorem 4]{Choda70}, where H. Choda establishes that every element in the unit ball of a von Neumann algebra is the average of two extreme points of the unit ball. A.A. Siddiqui generalises this result of Choda to any JBW$^*$-triple (\cite[Theorem 5]{Sidd2007}).
Since the notion of unitary is no longer applicable, these results are called Krein--Milman type theorems.\smallskip

All the comments above provide sufficient motivation for identifying new examples of Banach spaces satisfying a Krein--Milman type theorem. Some of them can be obtained by certain ``hyperplanes'' associated with multiplicative functionals on unital C$^*$-algebras. Let $A$ be a unital C$^*$-algebra and suppose $\varphi: A\to \CC$ is a homomorphism. % (i.e. $\varphi\neq 0$ is a linear mapping which satisfies $\varphi(ab)=\varphi(a)\varphi(b)$, for every $a,b\in A$).
We observe first that $\varphi$ is automatically continuous (cf. \cite[\S 16, Proposition 3]{BonsDun73}). We can therefore apply the Gleason-Kahane-\.{Z}elazko theorem \cite[Theorem 2]{Zel68} to deduce that $\varphi$ is in fact a $^*$-homomorphism, that is,  $\varphi(a^*)=\varphi(a)^*$, for every element $a$ in $A$.
Consequently, $\varphi$ is a triple homomorphism for the triple product when $A$ and $\mathbb{C}$ both are equipped with the tripe product defined in \eqref{eq product operators}. However, given $\lambda\in\TT$, the non-zero functional $\psi=\lambda\varphi: A \to \CC$ is a triple homomorphism which is not multiplicative.  %When $A$ and $\CC$ are regarded as JB$^*$-triples with respect to the triple product given in \eqref{eq product operators} (note that, since $\CC$ is equipped with the complex conjugation as involution and the complex product is commutative, the triple product in the field can be expressed as $\{x,y,z \}=x\overline{y}z$, for every $x,y,z\in \CC$), we can deduce that $\psi \{a,b,c \}= (\lambda\varphi) \{a,b,c \} = \lambda \{\varphi(a),\varphi(b),\varphi(c) \}= \lambda \varphi(a)\overline{\varphi(b)} \varphi(c)= (\lambda \varphi)(a)\overline{(\lambda\varphi)(b)} (\lambda\varphi)(c) = \{\psi(a),\psi(b),\psi(c) \}$, for every $a,b,c$ in $A$. This shows that $\psi$ is a triple homomorphism.
\smallskip

It is worth noting that every triple homomorphism $\psi: A \to \CC$ can be expressed as a product of an element $\lambda\in \mathbb{T}$ and a $^*$-homomorphism $\varphi:A\to\CC$. We note that every triple homomorphism $\psi$ from a JB$^*$-triple $E$ into $\CC$ is automatically continuous (cf. \cite[Lemma 1.6]{Ka83}). Suppose $\psi\neq 0$. Since for every $a\in A$ we have $\psi(a)= \psi \{a,1,1 \}= \{\psi(a),\psi(1),\psi(1) \}=\psi(a)\overline{\psi(1)} \psi(1)$, it follows that $\psi(1)\in\TT$ because $\psi\neq 0$. It is standard to check that the mapping $\varphi=\overline{\psi(1)}\psi$ is a Jordan $^*$-homomorphism from $A$ onto $\CC$. We can therefore apply \cite[proof of Theorem 1]{Zel68} to deduce that $\varphi$ is a $^*$-homomorphism, and $\psi=\psi(1)\varphi$.\smallskip

Let $A$ be a unital C$^*$-algebra, let $\varphi:A\to \CC$ be a (continuous) multiplicative functional, and let $A^\varphi_\RR:= \varphi^{-1}(\RR)=\{ a\in A : \varphi(a)\in \RR \}$. Clearly $A^\varphi_\RR$ is a real C$^*$-subalgebra of $A$.  M. Mori and N. Ozawa prove in \cite[Lemma 19]{MoriOza2018} that $\mathcal{B}_{A^\varphi_\RR}$ coincides with the closed convex hull of the unitary elements in $A^\varphi_\RR$. The next statement somehow extends this conclusion to the triple setting. %as a straight consequence of the quoted result and the previous observations.
The result also shows a new class of real JB$^*$-triples satisfying a Krein--Milman type theorem.

\begin{proposition}\label{p triple homomorphism and real JBstar-triple strong Mankiewicz property}
Let $A$ be a unital C$^*$-algebra and let $\psi: A\to \CC$ be a {\rm(}continuous{\rm)} non-zero triple homomorphism. Then the closed unit ball of the real JB$^*$-triple $A^\psi_\RR:= \psi^{-1}(\RR)$ coincides with the closed convex hull of unitary tripotents in  $A^\psi_\RR$. Consequently, $\mathcal{B}_{A^\psi_\RR}$ and every convex body $\mathcal{K}\subset A^\psi_\RR$ satisfy the strong Mankiewiecz property.
\end{proposition}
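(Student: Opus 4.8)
The plan is to relate $A^\psi_\RR$ to the real C$^*$-subalgebra $A^\varphi_\RR$ already handled by Mori and Ozawa, where $\varphi = \overline{\psi(1)}\,\psi$ is the $^*$-homomorphism produced in the paragraph preceding the statement. Writing $\lambda = \psi(1) \in \TT$, we have $\psi = \lambda \varphi$, so
\[
A^\psi_\RR = \{a \in A : \lambda \varphi(a) \in \RR\} = \{a \in A : \varphi(a) \in \overline{\lambda}\,\RR\}.
\]
Fix a square root $\mu \in \TT$ with $\mu^2 = \overline{\lambda}$. The map $a \mapsto \overline{\mu}\, a$ is a surjective (complex, hence real) linear isometry of $A$ which is also a triple automorphism for the product in \eqref{eq product operators}, and it carries $A^\varphi_\RR$ onto $A^\psi_\RR$: indeed $\varphi(\overline{\mu} a) = \overline{\mu}\,\varphi(a) \in \RR$ iff $\varphi(a) \in \mu \RR$, and $\mu\RR = \overline{\lambda}\,\mu\RR$... more carefully, one checks $\overline{\mu}(\mu\RR) = \RR$, so $a \in A^\psi_\RR \Leftrightarrow \varphi(a)\in\overline\lambda\,\RR = \mu^2\RR$, and then $\varphi(\mu a) = \mu\varphi(a)\in\mu^3\RR=\mu^{-1}\RR$; it is cleaner to simply note that multiplication by $\overline\mu$ sends the real line $\RR\subset\CC$ onto $\overline\mu\RR$ and choose $\mu$ so that $\overline\mu\RR=\overline\lambda\,\RR$, i.e. $\overline\mu/\overline\lambda\in\RR$ together with $|\mu|=1$ forces a choice of $\mu\in\{\lambda,-\lambda\}$; take $\mu=\lambda$. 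Thus $a\mapsto\overline\lambda a$ maps $A^\varphi_\RR$ isometrically and triple-isomorphically onto $A^\psi_\RR$.

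Granting this identification, the assertion transfers verbatim. By \cite[Lemma 19]{MoriOza2018}, $\mathcal{B}_{A^\varphi_\RR} = \overline{\mathrm{co}}\big(\mathcal{U}(A^\varphi_\RR)\big)$, where $\mathcal{U}(A^\varphi_\RR)$ is the set of unitary elements of the real C$^*$-algebra $A^\varphi_\RR$. Under the isometric triple isomorphism $a \mapsto \overline{\lambda}a$ the closed unit ball goes to the closed unit ball, convex hulls and norm closures are preserved, and — crucially — unitary elements of the real C$^*$-algebra $A^\varphi_\RR$ are exactly the unitary tripotents of $A^\varphi_\RR$ viewed as a real JB$^*$-triple (this is the real analogue of the coincidence of the three notions of "unitary" recalled in the background subsection, and follows since $\mathbf{1}\in A^\varphi_\RR$, $A^\varphi_\RR$ is its own Peirce-$2$ space at $\mathbf1$, and a tripotent $u$ with $(A^\varphi_\RR)_2(u)=A^\varphi_\RR$ is Jordan-invertible with $u^*$ as inverse). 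Hence $\mathcal{B}_{A^\psi_\RR}$ is the closed convex hull of the unitary tripotents of $A^\psi_\RR$. In particular $\mathcal{B}_{A^\psi_\RR}$ equals the closed convex hull of $\partial_e(\mathcal{B}_{A^\psi_\RR})$ (every unitary tripotent is a complete tripotent, hence an extreme point of the unit ball of the real JB$^*$-triple by \cite[Lemma 3.3]{IsKaRo95}), so $\overline{\mathrm{co}}(\partial_e(\mathcal{B}_{A^\psi_\RR}))$ has non-empty interior — it is the whole unit ball. Theorem \ref{t Mori-Ozawa strong Mankiewicz} then yields at once that $\mathcal{B}_{A^\psi_\RR}$ and every convex body $\mathcal{K}\subset A^\psi_\RR$ satisfy the strong Mankiewicz property.

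The only genuinely delicate point is the very first one: verifying that the scalar rotation $a\mapsto\overline\lambda a$ (or the appropriate root) really does carry $A^\varphi_\RR$ onto $A^\psi_\RR$ and is simultaneously an isometry and a triple isomorphism. The triple-isomorphism part is immediate from formula \eqref{eq product operators}, since $\{\overline\lambda x,\overline\lambda y,\overline\lambda z\} = \tfrac12(\overline\lambda x\,\overline{\overline\lambda y}^*\,\overline\lambda z + \cdots) = |\lambda|^2\,\overline\lambda\,\{x,y,z\} = \overline\lambda\{x,y,z\}$, using $|\lambda|=1$; and the map is a bijective linear isometry of $A$. The set-theoretic identity $\overline\lambda\cdot A^\varphi_\RR = A^\psi_\RR$ just amounts to the identity of subsets $\overline\lambda\cdot\varphi^{-1}(\RR)=\varphi^{-1}(\overline\lambda\,\RR)=\psi^{-1}(\RR)$, which is elementary once one recalls $\psi=\lambda\varphi$. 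Everything else is bookkeeping built on \cite[Lemma 19]{MoriOza2018} and Theorem \ref{t Mori-Ozawa strong Mankiewicz}.
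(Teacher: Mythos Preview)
Your proof is correct and follows essentially the same route as the paper: write $\psi=\lambda\varphi$ with $\lambda=\psi(1)\in\TT$ and $\varphi$ a $^*$-homomorphism, observe that $A^\psi_\RR=\overline{\lambda}\,A^\varphi_\RR$, and then transport \cite[Lemma 19]{MoriOza2018} across the isometry $a\mapsto\overline{\lambda}a$ before invoking Theorem~\ref{t Mori-Ozawa strong Mankiewicz}. The detour through a square root $\mu$ is unnecessary (as you yourself realize mid-argument) and could be deleted; the paper goes straight to $A^\psi_\RR=\overline{\lambda}A^\varphi_\RR$ and approximates elementwise rather than phrasing the transfer as a triple isomorphism, but the content is identical.
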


\begin{proof}
The observations made above guarantee the existence of a non-zero and (continuous) multiplicative functional $\varphi: A \to \CC$ and an element $\lambda$ in $\TT$ such that $\psi=\lambda\varphi$. If we write $A^\varphi_\RR=\{ b\in A : \varphi(b)\in \RR\}$, it is clear that $A^\psi_\RR=\psi^{-1}(\RR)=(\lambda\varphi)^{-1}(\RR)=\{ a\in A : \varphi(a)\in \overline{\lambda}\RR\} =\overline{\lambda}A^\varphi_\RR$.
Therefore, $\mathcal{B}_{A^\psi_\RR}=\mathcal{B}_{\overline{\lambda}A^\varphi_\RR} =\overline{\lambda}\mathcal{B}_{A^\varphi_\RR}$.\smallskip
	
Let us pick now $a\in \mathcal{B}_{A^\psi_\RR}$ and $\varepsilon>0$. We have shown that there exists $b\in \mathcal{B}_{A^\varphi_\RR}$ such that $a=\overline{\lambda}b$. It is shown in the proof of \cite[Lemma 19]{MoriOza2018} that there exist unitary elements $u_1,\ldots,u_n$ in the real C$^*$-algebra $A^\varphi_\RR$ and $\alpha_1,\dots,\alpha_n$ in $[0,1]$ with $\sum\limits_{j=1}^{n}\alpha_j=1$ satisfying $\left\|b-\sum\limits_{j=1}^{n}\alpha_ju_j\right\|<\varepsilon$. Therefore, $\left\|a-\sum\limits_{j=1}^{n}\alpha_j\overline{\lambda}u_j\right\|=\left\| \overline{\lambda}b-\sum\limits_{j=1}^{n}\alpha_j\overline{\lambda}u_j \right\|=\left\|b-\sum\limits_{j=1}^{n}\alpha_ju_j\right\|<\varepsilon$.
	Finally, we observe that $\overline{\lambda}u_1,\ldots,\overline{\lambda}u_n$ are unitary tripotents in $A^\psi_\RR$. %, that is, they belong to $\partial_{e}(\mathcal{B}_{A^\psi_\RR})$.
The final conclusion follows from Theorem \ref{t Mori-Ozawa strong Mankiewicz} and  \cite[Lemma 4]{MoriOza2018}.
\end{proof}

A Krein-Milan type theorem for the space $C(K,\mathcal{H})$ is essentially known in the literature.

\begin{proposition}\label{p conditions on C(K,H) to satisfy the Strong Mankiewicz property}\cite{Phelps65,Cantwell68,Peck1968}
Let $K$ be a compact Hausdorff space and let $\mathcal{H}$ be a real Hilbert space with dim$(\mathcal{H})\geq 2$. Then the closed unit ball of $C(K,\mathcal{H})$ coincides with the closed convex hull of its extreme points. Consequently, every convex body in $C(K,\mathcal{H})$ satisfies the strong Mankiewicz property.
\end{proposition}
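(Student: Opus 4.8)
The plan is to reduce the statement to a classical Krein--Milman type theorem for vector-valued continuous functions and then invoke Theorem~\ref{t Mori-Ozawa strong Mankiewicz}. First I would recall the precise extremal description already quoted in \eqref{eq extreme points conts functions}: since a real Hilbert space $\mathcal{H}$ with $\dim(\mathcal{H})\geq 2$ is strictly convex, an element $e\in C(K,\mathcal{H})$ is an extreme point of $\mathcal{B}_{C(K,\mathcal{H})}$ if and only if $\|e(t)\|=1$ for every $t\in K$, i.e. $e(t)\in S(\mathcal{H})$ pointwise. Thus $\partial_e(\mathcal{B}_{C(K,\mathcal{H})})$ is exactly the set of continuous maps $K\to S(\mathcal{H})$, which is non-empty (the constant maps work). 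So the content of the first assertion is that every norm-one (indeed every norm-$\leq 1$) continuous $\mathcal{H}$-valued function on $K$ is a uniform limit of finite convex combinations of continuous maps into the unit sphere.

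The key step is the density claim, and here I would appeal to the forerunners cited in the statement, namely Phelps \cite{Phelps65}, Cantwell \cite{Cantwell68} and Peck \cite{Peck1968}. The relevant fact is that for a strictly convex Banach space $X$ with $\dim(X)\geq 2$ one has $\overline{\mathrm{co}}\,(\partial_e \mathcal{B}_{C(K,X)}) = \mathcal{B}_{C(K,X)}$ for every compact Hausdorff $K$; Cantwell and Peck established essentially this in the finite-dimensional case, and for a Hilbert space the infinite-dimensional situation reduces easily to finite dimensions by approximating any $a\in \mathcal{B}_{C(K,\mathcal{H})}$ in the sup norm by a function with values in a finite-dimensional subspace (via a finite partition of unity subordinate to a cover by sets on which $a$ varies little, replacing $a$ on each piece by a fixed value and then projecting onto the span of those finitely many vectors; the projection is norm-decreasing so stays in the ball and strict convexity is inherited). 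So I would state: by \cite{Phelps65,Cantwell68,Peck1968} together with this reduction, $\mathcal{B}_{C(K,\mathcal{H})} = \overline{\mathrm{co}}\,(\partial_e \mathcal{B}_{C(K,\mathcal{H})})$.

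Once the density is in hand, the closed convex hull of the extreme points is the whole closed unit ball, which trivially has non-empty interior in $C(K,\mathcal{H})$. Therefore Theorem~\ref{t Mori-Ozawa strong Mankiewicz} applies verbatim with $X = C(K,\mathcal{H})$, yielding that every convex body $\mathcal{K}\subset C(K,\mathcal{H})$ has the strong Mankiewicz property, and also that any surjective isometry from $\mathcal{B}_{C(K,\mathcal{H})}$ onto a convex subset of a normed space extends uniquely to an affine isometry. That completes the proof.

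The main obstacle, and the only non-formal point, is the density statement itself. Proving $\overline{\mathrm{co}}\,(\partial_e \mathcal{B}_{C(K,\mathcal{H})}) = \mathcal{B}_{C(K,\mathcal{H})}$ directly requires the classical argument of Cantwell/Peck: given $a$ with $\|a\|\leq 1$ and $\varepsilon>0$, one writes $a$ (or a close finite-dimensional approximant of it) pointwise as an average of two, hence of finitely many, sphere-valued functions, which is possible precisely because in $S(\mathcal{H})$ with $\dim(\mathcal{H})\geq 2$ any vector of norm $\leq 1$ is a midpoint of two unit vectors and, crucially, these unit vectors can be chosen to depend continuously on the parameter (this fails for $\dim(\mathcal{H})=1$, where the sphere is disconnected — which is exactly why the hypothesis $\dim(\mathcal{H})\geq 2$ appears). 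Since this is established in the cited references, I would not reproduce it but simply cite it; the remaining steps are the routine reductions and the invocation of Theorem~\ref{t Mori-Ozawa strong Mankiewicz}.
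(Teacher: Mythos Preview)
Your proposal is correct and follows essentially the same approach as the paper: invoke the classical density results of \cite{Phelps65,Cantwell68,Peck1968} for the Krein--Milman type statement, then apply Theorem~\ref{t Mori-Ozawa strong Mankiewicz}. The only organizational difference is that the paper splits cases more explicitly---using \cite{Phelps65} for $\dim(\mathcal{H})=2$ (via $\mathbb{R}^2\cong\mathbb{C}$), \cite{Cantwell68} for $2<\dim(\mathcal{H})<\infty$, and \cite[Theorem~5]{Peck1968} directly for $\dim(\mathcal{H})=\infty$---whereas you reduce the infinite-dimensional case to the finite-dimensional one by a partition-of-unity approximation; both routes are valid.
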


\begin{proof}
If dim$(\mathcal{H})=n\in \NN$, we can identify the Hilbert space $\mathcal{H}$ with $\ell_{2}^n(\mathbb{R})$. If $n=2$, R. Phelps proved in \cite[Theorem 1]{Phelps65} that the convex hull of the extreme points of $B_{C(K)}$ is always dense in the closed unit ball. If $n>2$, the same conclusion holds by \cite[Theorem I and Remark]{Cantwell68}. On the other hand, if dim$(\mathcal{H})=\infty$, then the closed unit ball of $C(K,\mathcal{H})$ coincides with the convex hull of its extreme points by \cite[Theorem 5]{Peck1968}. Therefore, in both cases $C(K,\mathcal{H})$ satisfies a Krein--Milman type theorem and the thesis of our proposition derives from Theorem \ref{t Mori-Ozawa strong Mankiewicz}.
\end{proof}

The following technical lemma is required for later purposes.

\begin{lemma}\label{l approximation by non-vanishing functions}
Let $K$ be a compact Hausdorff space and let $\mathcal{H}$ be a real Hilbert space with dim$(\mathcal{H})=n\geq 2$. Suppose $t_0\in K$ and $x_0\in S(\mathcal{H})$. If $a\in \mathcal{B}_{C(K,\mathcal{H})}$ is such that $a(t_0)\in\RR x_0$ and $\varepsilon>0$ is small enough. Then the following statements hold:
\begin{enumerate}[$(a)$]\item If $\mathcal{H}$ is infinite dimensional, then there exists a non-vanishing function $b$ in $\mathcal{B}_{C(K,\mathcal{H})}$ such that $b(t_0)\in\RR x_0$ and $\|a-b \|<\varepsilon$. If $a(t_0)\neq 0$, we can also assume that $b(t_0) = a(t_0)$;
\item If $\mathcal{H}$ is finite dimensional, then there exist non-vanishing continuous functions $b_1,\ldots,b_k$ in $\mathcal{B}_{C(K,\mathcal{H})}$ such that $b_j(t_0)\in\RR x_0$, for every $j\in \{1,\ldots,k\}$, and $\left\|a-\frac{1}{k}\sum_{j=1}^{k} b_j\right\|\leq \varepsilon.$ If $a(t_0)\neq 0$, we can also assume that $b_j(t_0) = a(t_0)$ for all $j\in \{1,\ldots,k\}.$

    Furthermore, for each $j$ in $\{0,\ldots,k\}$ there exit $v_j\in C(K,\mathcal{H})$ satisfying $\|v_j (t) \|= 1,$ and $(b_j(t)|v_j(t))= 0,$ for all $t\in K,$ and thus $u_j = b_j + (1-\|b_j(\cdot)\|^2)^{\frac12} v_j$, $w_j = b_j - (1-\|b_j(\cdot)\|^2)^{\frac12} v_j$ both lie in $\partial_e(\mathcal{B}_{C(K,\mathcal{H})})$ and $b_j = \frac{1}{2}(u_j +w_j)$.
\end{enumerate}

\end{lemma}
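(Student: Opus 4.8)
\emph{Part (a).} The plan is to push $a$ off zero by adding a small function supported where $\|a(\cdot)\|$ is small and pointing in a direction that $a$ avoids there. The geometric input I would use is that the unit sphere of an infinite dimensional Hilbert space is not $\sigma$-compact (combine the Riesz lemma with the Baire category theorem), so that for every compact $C\subseteq\mathcal{H}$ the set $\{y/\|y\|: y\in C\setminus\{0\}\}$ is a proper subset of $S(\mathcal{H})$. Concretely: fix $\delta\in(0,\varepsilon)$ small, take $\eta\in C([0,1],[0,1])$ with $\eta\equiv 1$ on $[0,\delta]$ and $\eta\equiv 0$ on $[2\delta,1]$, put $\psi:=\eta\circ\|a(\cdot)\|\in C(K,[0,1])$, and choose $\xi\in S(\mathcal{H})$ that is not a positive multiple of any value $a(t)$ with $0<\|a(t)\|\le 2\delta$ (possible by the remark applied to the compact set $a(\{\|a\|\le 2\delta\})$). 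If $a(t_0)\neq 0$ set $b:=a+\delta\psi\,\xi$: since $\delta\psi\xi$ vanishes off $\{\|a\|<2\delta\}$ one has $\|b\|\le 1$ and $\|b-a\|\le\delta<\varepsilon$; $b$ is non-vanishing because $b(t)=0$ forces $a(t)=-\delta\psi(t)\xi$, impossible when $\psi(t)=0$ (then $a(t)=0$, hence $\psi(t)=1$) and excluded by the choice of $\xi$ when $\psi(t)>0$; and $\psi(t_0)=0$ for $\delta$ small, so $b(t_0)=a(t_0)$. If $a(t_0)=0$ I would incorporate besides a term $\delta\rho\,x_0$ with $\rho\in C(K,[0,1])$ supported in a small neighbourhood of $t_0$ on which $\|a\|<\delta$, $\rho(t_0)=1$, with $\xi\perp x_0$ and the $\xi$-term cut off near $t_0$; the same checks then give $\|b\|\le 1$, $\|b-a\|\le 2\delta<\varepsilon$, $b$ non-vanishing, and $b(t_0)=\delta x_0\in\mathbb{R}x_0$.

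\emph{Part (b), reduction.} By uniform continuity of $a$ on the compact $K$, the plan is to choose a finite open cover $\{U_i\}_{i=1}^m$ with $\mathrm{diam}\,a(U_i)<\varepsilon/2$, a subordinate partition of unity $\{\phi_i\}$, and points $t_i\in U_i$ with $t_{i_0}=t_0$ and $\phi_i(t_0)=0$ for $i\neq i_0$. Writing $p_i:=a(t_i)\in\mathcal{B}_\mathcal{H}$ one gets $\|a-\sum_i\phi_i p_i\|<\varepsilon/2$, $p_{i_0}=a(t_0)\in\mathbb{R}x_0$, and $(\sum_i\phi_i p_i)(t_0)=a(t_0)$, so it suffices to write $\sum_i\phi_i p_i$, to within $\varepsilon/2$, as $\tfrac1k\sum_{j=1}^k b_j$ with $b_j$ of the required kind. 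This reduces the problem to the following finite-dimensional, combinatorial task: realise the finitely many points $p_i$ — continuously in a neighbourhood of the compact set $\{p_i\}\subseteq\mathcal{B}_\mathcal{H}$, so that overlapping $U_i$ feed in mutually close vectors — as averages $p_i=\tfrac1k\sum_j q_{ij}$ of suitable \emph{nonzero} vectors $q_{ij}\in\mathcal{B}_\mathcal{H}$, and then set $b_j:=\sum_i\phi_i\,q_{ij}$.

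\emph{Part (b), the functions $b_j$, $v_j$ and the splitting.} The $b_j$ produced this way automatically satisfy $\|b_j\|\le 1$ and $\tfrac1k\sum_jb_j=\sum_i\phi_ip_i$ (up to the approximation built into the decomposition), and they are non-vanishing provided the $q_{ij}$ feeding each $b_j$ are drawn from a region keeping convex combinations bounded away from $0$ (e.g.\ a half-space / a cone about a fixed direction), while the value at $t_0$ is arranged by a local surgery near $t_0$ — bending each $b_j$ so that $b_j(t_0)$ lands in $\mathbb{R}x_0\setminus\{0\}$, using that $\mathcal{B}_\mathcal{H}\setminus\{0\}$ is path-connected (as $\dim\mathcal{H}\ge 2$), and choosing the limiting values so that $\tfrac1k\sum_jb_j(t_0)=a(t_0)$ (equal to $a(t_0)$ for each $j$ when $a(t_0)\neq 0$, and $\pm\delta_0x_0$ in balanced numbers when $a(t_0)=0$). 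The companions $v_j$ would be produced by keeping the direction maps $b_j/\|b_j\|$ ``topologically trivial'': when $\dim\mathcal{H}=2$ one may simply take $v_j:=J\circ(b_j/\|b_j\|)$ with $J$ a rotation by $90^{\circ}$, continuous and orthonormal for \emph{any} non-vanishing $b_j$; when $\dim\mathcal{H}\ge 3$ one arranges that each $b_j$ takes values in a contractible subset of $\mathcal{H}\setminus\{0\}$ away from $t_0$ (so that $(b_j/\|b_j\|)^{*}$ of the tautological complement bundle over $S(\mathcal{H})$ is trivial and admits a unit section $v_j$), and adjusts $v_j$ compatibly during the surgery through the path-connected space of orthonormal pairs $(x,v)$ with $v\perp x$, $\|v\|=1$. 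Once $b_j$ and $v_j$ are in hand, $\|b_j(t)\|\le 1$ gives $\|u_j(t)\|^2=\|w_j(t)\|^2=\|b_j(t)\|^2+(1-\|b_j(t)\|^2)\|v_j(t)\|^2=1$ for every $t$, so by \eqref{eq extreme points conts functions} (Hilbert space being strictly convex) $u_j,w_j\in\partial_e(\mathcal{B}_{C(K,\mathcal{H})})$, and $b_j=\tfrac12(u_j+w_j)$.

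\emph{Main obstacle.} The hard part is entirely in part (b): producing the $b_j$ with all the requisite properties \emph{at once} — non-vanishing, inside $\mathcal{B}_{C(K,\mathcal{H})}$, averaging to $a$, with a value in $\mathbb{R}x_0$ at $t_0$, and possessing a continuous unit orthonormal companion $v_j$. The last requirement is not automatic — the inclusion $S^2\hookrightarrow\mathbb{R}^3$ is an extreme point of the unit ball with no continuous unit normal field — so one cannot merely invoke a Krein–Milman-type theorem for $C(K,\mathcal{H})$ and use arbitrary extreme points; one has to exploit that the $b_j$ factor through the nerve of the cover in order to keep the relevant obstruction (Euler) classes of $(b_j/\|b_j\|)^{*}$(tautological complement bundle) trivial, and to do so while still being able to average back to $a$ near the sphere of $\mathcal{H}$. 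The continuous ``structured decomposition'' $p_i\mapsto(q_{ij})_j$ and the coordinated surgery at $t_0$ are the technical heart, and I expect verifying that a suitable $k$ and suitable regions exist is where the real work lies.
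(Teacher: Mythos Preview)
For part~(a) your direct perturbation is a workable alternative to the paper, which instead invokes Peck's result (the Corollary after Proposition~2 in \cite{Peck1968}) as a black box: that corollary yields a non-vanishing $b\in\mathcal{B}_{C(K,\mathcal{H})}$ with $b(t)=a(t)$ wherever $\|a(t)\|\geq\varepsilon/2$ and $\|b(t)\|<\varepsilon/2$ elsewhere, so $b(t_0)=a(t_0)$ automatically when $|\lambda|>\varepsilon/2$; the case $a(t_0)=0$ is then handled by first perturbing to $\tilde a$ with $\tilde a(t_0)=(\varepsilon/2)x_0$ via Urysohn's lemma. Your argument has a small sign slip (if $b(t)=0$ then $a(t)=-\delta\psi(t)\xi$, so you need $\xi$ not to be a \emph{negative} multiple of any small $a(t)$), but the compactness step is unaffected.

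For part~(b) there is a genuine gap: the ``structured decomposition $p_i\mapsto(q_{ij})_j$'' you flag at the end is the entire content of the lemma, and your scheme does not supply it. The paper bypasses the partition-of-unity reduction altogether and works directly in $\mathcal{B}_{\mathcal{H}}$ via the Sine--Peck wedge construction from \cite{Peck1968}: choose $k$ thin wedges $W(z_j,\alpha_j,\beta_j)=\mathrm{co}\bigl(B(z_j,\alpha_j)\cup\{-\beta_j z_j\}\bigr)$ with apices $z_j\in S(\mathcal{H})$, pairwise disjoint outside the ball of radius $\varepsilon/2$ and each meeting $\mathbb{R}x_0$ only in $[-\varepsilon x_0,\varepsilon x_0]$; let $\varphi_j:\mathcal{B}_{\mathcal{H}}\to\mathcal{B}_{\mathcal{H}}\setminus\mathring W(z_j,\alpha_j,\beta_j)$ push each point out along $-z_j$ to the wedge boundary; and set $b_j:=\varphi_j\circ a$. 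Then $b_j$ is non-vanishing because $0$ lies in the interior of every wedge; $\bigl\|a-\tfrac1k\sum_j b_j\bigr\|\leq\varepsilon$ because any point with norm $>\varepsilon/2$ lies in at most one wedge (so at most one $\varphi_j$ moves it, by at most $2/k$); and $b_j(t_0)=a(t_0)$ because $a(t_0)=\lambda x_0$ with $|\lambda|\geq 2\varepsilon$ avoids every wedge by the intersection condition with $\mathbb{R}x_0$.

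The decisive gain is that this construction \emph{forces} $b_j/\|b_j\|$ to take values in $S(\mathcal{H})\setminus B(z_j,\alpha_j)$, a contractible subset of the sphere; Peck then supplies a continuous unit tangent field $\vartheta_j$ on that set, and $v_j:=\vartheta_j\bigl(b_j/\|b_j\|\bigr)$ is the required orthonormal companion. So the hairy-ball obstruction you correctly identified is killed not by a nerve/bundle argument but by the very shape of the wedge excision, and no further surgery at $t_0$ or coordination across charts is needed.
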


\begin{proof}
Take $a\in \mathcal{B}_{C(K,\mathcal{H})}$ such that $a(t_0)=\lambda x_0$, with $\lambda\in \RR$, and $\varepsilon>0$. We shall split the proof into two cases.\smallskip

\noindent\emph{Case 1:} Suppose $\mathcal{H}$ is infinite dimensional.\smallskip
		
If $\lambda\in \RR \setminus \{0\}$, then clearly $1\geq \|a\|\geq |\lambda|>0$. By \cite[Corollary after Proposition 2 ]{Peck1968} applied to $|\lambda|/2 > \varepsilon/2>0$, there exists $b\in \mathcal{B}_{C(K,\mathcal{H})}$ which is a non-vanishing function (i.e. $\|b(t)\|\geq m>0$ for every $t\in K$, and some $m\in\RR^+$) such that for each $t\in K$, $\|b(t)\|<\varepsilon/2 \ $ if $\ \|a(t)\|<\varepsilon/2$, and $b(t)=a(t) \ $ if $\ \|a(t)\|\geq\varepsilon/2$. It is not hard to check that $\|a-b\|<\varepsilon$, and $b(t_0)=a(t_0)=\lambda x_0$ because $\|a(t_0)\| = |\lambda|>\varepsilon/2$.\smallskip
		
On the other hand, if $\lambda=0$, that is, if $a(t_0)=0$, let us consider the open set $\mathcal{U}_\varepsilon=\{t\in K : \|a(t)\|<\varepsilon/2\}$. By Urysohn's lemma there exists a continuous function $f:K\to \RR$ such that $0\leq f \leq 1$, $f(t_0)=1$ and $f|_{K\setminus \mathcal{U}_\varepsilon}\equiv 0$. Define $\widetilde{a} = a + (\varepsilon/2) x_0 \otimes f\in C(K,\mathcal{H})$, which lies in the closed unit ball for $\varepsilon$ small enough ($\varepsilon\leq 1$). Note that $\|a-\widetilde{a}\|\leq \varepsilon/2$. Since $\widetilde{a}(t_0)=(\varepsilon/2) x_0$ and $\varepsilon/2\neq 0$, we have shown before that there exists a non-vanishing function $b\in \mathcal{B}_{C(K,\mathcal{H})}$ such that for each $t\in K$, $\|b(t)\|<\varepsilon/4 \ $ if $\ \|\widetilde{a}(t)\|<\varepsilon/4$, and $b(t)=\widetilde{a}(t) \ $ if $\ \|\widetilde{a}(t)\|\geq\varepsilon/4$ (cf. \cite[Corollary after Proposition 2 ]{Peck1968}). Therefore $b(t_0)=\widetilde{a}(t_0)=(\varepsilon/2)x_0$. It is also clear that $\|\widetilde{a}-b\|<\varepsilon/2$, and thus $\|a-b\|\leq \|a-\widetilde{a}\|+ \|\widetilde{a}-b\|<\varepsilon/2+\varepsilon/2=\varepsilon$ as desired.\smallskip
	
\noindent\emph{Case 2} Suppose dim$(\mathcal{H})=n\geq 2$.\smallskip
	
As before, let us distinguish the cases $\lambda=0$ and $\lambda\neq 0$. Let us first assume that $|\lambda |\geq 2\varepsilon>0$ with $\varepsilon$ small enough. Following the arguments due to R.C. Sine and N.T. Peck (see \cite[proof of Theorem 1]{Peck1968}), for $\alpha, \beta>0$ and $z_0\in S(\mathcal{H})$, we shall consider $B(z_0,\alpha)=\{ z\in S(\mathcal{H}): \| z-z_0 \|<\alpha \}$ and the wedge $W(z_0,\alpha,\beta):= \hbox{co}(B(z_0,\alpha)\cup \{-\beta z_0\})$.\smallskip
	
For every $\varepsilon>0$, there exists $k\in \NN$ such that $\frac{1}{k}<\frac{\varepsilon}{2}$. Find $z_1,\cdots, z_k\in S(\mathcal{H})$, $\alpha_1,\cdots, \alpha_k\in \RR^+$ and $\beta_1,\cdots, \beta_k\in \RR^+$, satisfying:
\begin{enumerate}[$\bullet$]\item The sets $\{ W(z_j,\alpha_j,\beta_j): j =1,\ldots, k\}$ are pointwise disjoint outside the closed ball in $\mathcal{H}$ centered in zero with radius $\varepsilon/2$;
\item $W(x_j,\alpha_j,\beta_j)\cap \RR x_0 \subseteq [-\varepsilon x_0,\varepsilon x_0]$, for every $j=1,\ldots,k$.
\end{enumerate}

Let us now define $\varphi_j:\mathcal{B}_{\mathcal{H}}\to \mathcal{B}_{\mathcal{H}}\setminus \mathring{W}(z_j,\alpha_j,\beta_j)$ given by $\varphi_j(z)=z$ if $z\notin W(z_j,\alpha_j, \beta_j)$, and for $z\in W(z_j,\alpha_j,\beta_j)$, $\varphi_j(z)$ is obtained by projecting $z$ parallel to $-z_j$ until it hits the boundary of $W(z_j,\alpha_j,\beta_j)$. The number $\beta_j$ can be chosen such that $\| \varphi_j(z) \|\leq \varepsilon/2$, for every $\|z\|\leq \varepsilon/2$. We claim that \begin{equation}\label{eq 1 200219} \left\|z-\frac{1}{k}\sum_{j=1}^{k}\varphi_j(z)\right\|\leq \varepsilon, \hbox{ for every $z\in \mathcal{B}_{\mathcal{H}}$.}
 \end{equation}Namely, if we take $z\in \mathcal{B}_{\mathcal{H}}$ with $\|z\|\leq \varepsilon/2$, then $\left\|z-\frac{1}{k}\sum_{j=1}^{k}\varphi_j(z)\right\|\leq \|z\| + \frac{1}{k}\sum_{j=1}^{k}\|\varphi_j(z)\|\leq \varepsilon$. On the other hand, if we pick $z\in \mathcal{B}_{\mathcal{H}}$ with $\|z\|> \varepsilon/2$, then $z$ lies in at most one $W(z_{j_0}, \alpha_{j_0}, \beta_{j_0})$, and that implies $\left\|z-\frac{1}{k}\sum_{j=1}^{k}\varphi_j(z)\right\|=\left\| \frac{z}{k}-\frac{\varphi_{j_0}(z)}{k} \right\|\leq \frac{2}{k}<\varepsilon$, as we were expecting.\smallskip
	
Set $b_j :=\varphi_j\circ a\in \mathcal{B}_{C(K,\mathcal{H})}$. Obviously, $b_j$ is a non-vanishing function. It follows from \eqref{eq 1 200219} that $\displaystyle \left\|a-\frac{1}{k}\sum_{j=1}^{k}b_j\right\|= \left\|a-\frac{1}{k}\sum_{j=1}^{k}\varphi_j\circ a\right\|\leq \varepsilon.$ Furthermore, since $\|a(t_0)\|=|\lambda|\geq 2\varepsilon>\varepsilon$ and $W(x_j,\alpha_j,\beta_j)\cap \RR x_0 \subseteq [-\varepsilon x_0,\varepsilon x_0]$, it follows that $a(t_0)=\lambda x_0\notin W(z_j,\alpha_j,\beta_j)$, for any $j\in\{1,\cdots,k\}$, and thus $b_j(t_0)=\varphi_j\circ a (t_0)=a(t_0)=\lambda x_0\in \RR x_0$, for every $j\in\{1,\cdots,k\}$.\smallskip
	
If we assume $a(t_0)=0$, we can argue as in the infinite dimensional case, and thus, for $0<\varepsilon<1$ we define $\widetilde{a}\in \mathcal{B}_{C(K,\mathcal{H})}$, with $\widetilde{a}(t_0)=(\varepsilon/2) x_0$ and such that $\|a-\widetilde{a}\|\leq \varepsilon/2$. Now we can apply the conclusions above which guarantee the existence of $k\in \NN$,  non-vanishing functions $b_1,\cdots, b_k\in \mathcal{B}_{C(K,\mathcal{H})}$ and such that $b_j(t_0)=(\varepsilon/2) x_0$, for every $j\in\{1,\cdots,k\}$. The desired conclusion follows from the inequality $\displaystyle \left\| a-\frac{1}{k}\sum_{j=1}^{k}b_j \right\|\leq \left\| a-\widetilde{a} \right\| + \left\|\widetilde{a}- \frac{1}{k}\sum_{j=1}^{k}b_j \right\|<\varepsilon$.\smallskip

The rest of the argument is essentially in \cite[Proof of Theorem 1]{Peck1968}. It is shown in the just quoted paper that, for each $j\in\{1,\cdots,k\}$ there exists a continuous field $\vartheta_j : S(\mathcal{H})\backslash B(z_j,\alpha_j)\to S(\mathcal{H})$ (i.e. a continuous mapping satisfying $(\vartheta_j (z) | z) = 0$ for all $z\in S(\mathcal{H})\backslash B(z_j,\alpha_j)$). Taking $v_j :=\vartheta_j(\frac{b_j (\cdot)}{\|b_j(\cdot)\|})$ we get the desired statement.
\end{proof}

Let $K$ be a compact Hausdorff space, and let $H$ be a real or complex  Hilbert space. For each $t_0\in K$ and each $x_0\in S(H)$ we set $$A(t_0,x_0):=\{ a\in S(C(K,H)) : a(t_0) = x_0\}.$$ It is not hard to check that $A(t_0,x_0)$ is a maximal norm-closed proper face of $\mathcal{B}_{C(K,H)}$ and a maximal convex subset of $S(C(K,H))$. Actually, every maximal convex subset of the unit sphere of $C(K,H)$ is of this form.\smallskip

Our next corollary is one of the main technical tools required for our main result.

\begin{corollary}\label{c convex combinations of unitaries in maximal faces}
Let $K$ be a compact Hausdorff space and let $H$ be a complex Hilbert space. %with dim$(H)=n\geq 2$.
Suppose $t_0\in K$ and $x_0\in S(H)$. Then every element in $A(t_0,x_0)$ can be approximated in norm by a finite convex combination of elements in $A(t_0,x_0)\cap \partial_{e}(\mathcal{B}_{C(K,H)})$.
\end{corollary}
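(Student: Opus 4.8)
The plan is to pass to the real picture and invoke Lemma~\ref{l approximation by non-vanishing functions} together with the Krein--Milman type theorems recorded in Proposition~\ref{p conditions on C(K,H) to satisfy the Strong Mankiewicz property}. Regard $H$ as a real Hilbert space $H_{\RR}$; the norms of $C(K,H)$ and $C(K,H_{\RR})$ coincide, so $A(t_0,x_0)$ is literally the same set in both spaces and $x_0\in S(H_{\RR})$. Since every real or complex Hilbert space is strictly convex, \eqref{eq extreme points conts functions} gives $\partial_{e}(\mathcal{B}_{C(K,H)})=\partial_{e}(\mathcal{B}_{C(K,H_{\RR})})$ (in both cases these are precisely the functions of constant norm one). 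Note also $\dim_{\RR}(H_{\RR})\geq 2$, since $S(H)\neq\emptyset$ forces $H\neq\{0\}$. Fix $a\in A(t_0,x_0)$ and $\varepsilon>0$; as $a(t_0)=x_0\in\RR x_0\setminus\{0\}$, Lemma~\ref{l approximation by non-vanishing functions} applies with $\mathcal H=H_{\RR}$.

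If $\dim_{\CC}(H)<\infty$, then $H_{\RR}$ is finite dimensional of dimension $2\dim_{\CC}(H)\geq 2$, so part~$(b)$ of Lemma~\ref{l approximation by non-vanishing functions} yields $k\in\NN$, non-vanishing functions $b_1,\dots,b_k\in\mathcal{B}_{C(K,H)}$ with $b_j(t_0)=a(t_0)=x_0$ and $\big\|a-\tfrac1k\sum_{j=1}^k b_j\big\|\leq\varepsilon$, together with functions $v_j$ for which $u_j=b_j+(1-\|b_j(\cdot)\|^2)^{1/2}v_j$ and $w_j=b_j-(1-\|b_j(\cdot)\|^2)^{1/2}v_j$ lie in $\partial_{e}(\mathcal{B}_{C(K,H)})$ and $b_j=\tfrac12(u_j+w_j)$. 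The decisive observation is that $\|b_j(t_0)\|=\|x_0\|=1$, so the coefficient $(1-\|b_j(t_0)\|^2)^{1/2}$ vanishes and $u_j(t_0)=w_j(t_0)=x_0$; hence $u_j,w_j\in A(t_0,x_0)\cap\partial_{e}(\mathcal{B}_{C(K,H)})$. Therefore $\tfrac1k\sum_{j=1}^k b_j=\tfrac{1}{2k}\sum_{j=1}^k(u_j+w_j)$ is a finite convex combination of elements of $A(t_0,x_0)\cap\partial_{e}(\mathcal{B}_{C(K,H)})$ lying within $\varepsilon$ of $a$.

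If $\dim_{\CC}(H)=\infty$, then $\mathcal{B}_{C(K,H)}$ is the convex hull (not merely its closure) of $\partial_{e}(\mathcal{B}_{C(K,H)})$ by \cite[Theorem~5]{Peck1968}, so $a=\sum_{i=1}^m\alpha_i e_i$ with $\alpha_i>0$, $\sum_i\alpha_i=1$, $e_i\in\partial_{e}(\mathcal{B}_{C(K,H)})$, and by \eqref{eq extreme points conts functions} each $e_i$ has constant norm one. Evaluating at $t_0$ gives $x_0=\sum_i\alpha_ie_i(t_0)$, a convex combination of unit vectors of the strictly convex space $H$ whose sum has norm one; this forces $e_i(t_0)=x_0$ for all $i$, so $e_i\in A(t_0,x_0)$ and $a$ itself already lies in the convex hull of $A(t_0,x_0)\cap\partial_{e}(\mathcal{B}_{C(K,H)})$ --- even stronger than claimed. (Alternatively, part~$(a)$ of Lemma~\ref{l approximation by non-vanishing functions} produces a non-vanishing $b$ near $a$ with $b(t_0)=x_0$, which can then be split into two extreme points of $A(t_0,x_0)$ exactly as in the previous paragraph, using that the unit sphere of the infinite dimensional Hilbert space $H_{\RR}$ carries a continuous nowhere-zero unit tangent field.)

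The only genuine difficulty is guaranteeing that the extreme points occurring in the convex decomposition belong to the maximal face $A(t_0,x_0)$; this is precisely what the equality $\|a(t_0)\|=1$ --- inherited by the approximating functions $b_j$, respectively by the $e_i$ --- delivers through strict convexity of $H$: an extreme point with positive weight in a convex combination equal to $x_0$ at $t_0$ must itself equal $x_0$ at $t_0$. The remaining steps --- the matching of the norms and of the extreme-point sets of $C(K,H)$ and $C(K,H_{\RR})$, and the bookkeeping of the convex weights --- are routine.
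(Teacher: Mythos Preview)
Your argument is correct and takes a genuinely different route from the paper's. The paper proves Corollary~\ref{c convex combinations of unitaries in maximal faces} via JB$^*$-triple machinery: given a non-vanishing $b\in A(t_0,x_0)$ it normalizes to the complete tripotent $u=b/\|b(\cdot)\|$, identifies the JB$^*$-subalgebra of $E_2(u)$ generated by $b$ and $u$ with a commutative unital C$^*$-algebra $A$, applies \cite[Lemma~18]{MoriOza2018} to approximate $b$ by convex combinations of unitaries of $A$ lying in the face $A(t_0,x_0)\cap A$, and finally invokes \cite[Lemma~4]{Sidd2007} to promote unitaries of $E_2(u)$ to extreme points of $\mathcal{B}_{C(K,H)}$. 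You instead realify and transplant the argument the paper later uses for \emph{real} Hilbert spaces (Corollary~\ref{c convex combinations of unitaries in maximal faces real}): the full statement of Lemma~\ref{l approximation by non-vanishing functions}$(b)$ already delivers the explicit splitting $b_j=\tfrac12(u_j+w_j)$ into extreme points, and the key point $u_j(t_0)=w_j(t_0)=x_0$ drops out of $\|b_j(t_0)\|=1$. Your infinite-dimensional case is even cleaner: Peck's theorem gives an exact convex decomposition of $a$, and strict convexity of $H$ pins every extreme summand to the face $A(t_0,x_0)$, with no approximation needed.

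What each approach buys: the paper's proof showcases the JB$^*$-triple viewpoint that the article is advertising and stays within the complex structure throughout, but at the cost of importing \cite[Lemma~18]{MoriOza2018} and \cite[Lemma~4]{Sidd2007}. Your proof is more elementary and self-contained to the paper (using only Lemma~\ref{l approximation by non-vanishing functions} and Peck's theorem, both already invoked elsewhere), and in fact shows that the complex corollary is a formal consequence of the real one via $H\mapsto H_{\RR}$; the only small price is the case split on $\dim_{\CC}H$.
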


\begin{proof} Take $a\in A(t_0,x_0)$. By Lemma \ref{l approximation by non-vanishing functions} can be approximated in norm by a finite convex combination of non-vanishing functions in $A(t_0,x_0)\cap \mathcal{B}_{C(K,H)}$. Let $b$ be a non-vanishing functions in $A(t_0,x_0)\cap \mathcal{B}_{C(K,H)}$. The element $u(\cdot) = \frac{b(\cdot)}{\|b(\cdot)\|}$ lies in $A(t_0,x_0)$ and is a maximal tripotent in $C(K,H)$ (i.e., an element in $A(t_0,x_0)\cap \partial_{e}(\mathcal{B}_{C(K,H)})$).\smallskip

To simplify the notation let us write $E$ for $C(K,H)$. Clearly, $b$ is a hermitian element in the JB$^*$-algebra $E_2(u)$. Let $A$ denote the JB$^*$-subalgebra of $E_2(u)$ generated by $b$ and $u$. It is known that $A$ is isometrically isomorphic to a commutative unital C$^*$-algebra (cf. \cite[Theorem 3.2.4]{HOS}). The intersection $F= A(t_0,x_0)\cap A\subseteq S(A)$ is a maximal norm-closed face of $\mathcal{B}_{A}$. Lemma 18 in \cite{MoriOza2018} guarantees that $b\in F$ can be approximated in norm by a finite convex combination of elements in $F\cap \partial_{e}(\mathcal{B}_{A})$. Every element in $\partial_{e}(\mathcal{B}_{A})$ is a unitary element in $A$, and hence a unitary element in $E_2(u)$. We further know from Lemma 4 in \cite{Sidd2007} that every unitary element in $E_2(u)$ is an extreme point of $\mathcal{B}_{E}$. We can therefore conclude that $F\cap \partial_{e}(\mathcal{B}_{A}) \subseteq F\cap \partial_{e}(\mathcal{B}_{E})\subseteq A(t_0,x_0)\cap \partial_{e}(\mathcal{B}_{E}),$ which finishes the proof.
\end{proof}

The case of real Hilbert spaces is treated in the next result.

\begin{corollary}\label{c convex combinations of unitaries in maximal faces real}
Let $K$ be a compact Hausdorff space and let $\mathcal{H}$ be a finite-dimen-sional real Hilbert space with dim$(\mathcal{H})=n\geq 2$.
Suppose $t_0\in K$ and $x_0\in S(\mathcal{H})$. Then every element in $A(t_0,x_0)$ can be approximated in norm by a finite convex combination of elements in $A(t_0,x_0)\cap \partial_{e}(\mathcal{B}_{C(K,\mathcal{H})})$.
\end{corollary}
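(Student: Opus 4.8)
The plan is to deduce this statement directly from Lemma \ref{l approximation by non-vanishing functions}$(b)$, which already packages both the required norm approximation and the extremal decomposition. Unlike the complex case treated in Corollary \ref{c convex combinations of unitaries in maximal faces}, no JB$^*$-algebra structure is available here: if $u$ is a maximal tripotent in $C(K,\mathcal{H})$ then the Peirce subspace $C(K,\mathcal{H})_2(u)$ consists of the pointwise real multiples of $u$, so a generic non-vanishing $b$ will not lie in it and the argument through the commutative C$^*$-algebra generated by $b$ and $u$ breaks down. Instead one exploits that a finite-dimensional real Hilbert space is strictly convex, so that the extreme points of $\mathcal{B}_{C(K,\mathcal{H})}$ are exactly the norm-one-valued functions (cf. \eqref{eq extreme points conts functions}), and Lemma \ref{l approximation by non-vanishing functions}$(b)$ produces precisely such functions via the Sine--Peck continuous orthogonal fields.

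First I would fix $a\in A(t_0,x_0)$ and $\varepsilon>0$; since the conclusion only gets easier for larger $\varepsilon$, there is no loss in taking $\varepsilon$ as small as the hypotheses of Lemma \ref{l approximation by non-vanishing functions} require. As $a(t_0)=x_0\in S(\mathcal{H})$, in particular $a(t_0)\in \RR x_0\setminus\{0\}$, so Lemma \ref{l approximation by non-vanishing functions}$(b)$ applies and yields $k\in\NN$ together with non-vanishing functions $b_1,\ldots,b_k\in \mathcal{B}_{C(K,\mathcal{H})}$ satisfying $b_j(t_0)=a(t_0)=x_0$ for every $j$ and $\left\|a-\frac1k\sum_{j=1}^k b_j\right\|\leq\varepsilon$. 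The same lemma provides, for each $j$, a field $v_j\in C(K,\mathcal{H})$ with $\|v_j(t)\|=1$ and $(b_j(t)\,|\,v_j(t))=0$ for all $t\in K$, so that $u_j=b_j+(1-\|b_j(\cdot)\|^2)^{\frac12}v_j$ and $w_j=b_j-(1-\|b_j(\cdot)\|^2)^{\frac12}v_j$ lie in $\partial_e(\mathcal{B}_{C(K,\mathcal{H})})$ and $b_j=\frac12(u_j+w_j)$.

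The one point requiring care is to check that these extreme points remain inside the face $A(t_0,x_0)$. This is where $\|x_0\|=1$ is used: since $\|b_j(t_0)\|=\|x_0\|=1$, the correcting scalar $(1-\|b_j(t_0)\|^2)^{\frac12}$ vanishes at $t_0$, whence $u_j(t_0)=w_j(t_0)=b_j(t_0)=x_0$ and therefore $u_j,w_j\in A(t_0,x_0)\cap\partial_e(\mathcal{B}_{C(K,\mathcal{H})})$ for all $j$. Consequently $\frac1k\sum_{j=1}^k b_j=\frac{1}{2k}\sum_{j=1}^k(u_j+w_j)$ exhibits a finite convex combination of $2k$ elements of $A(t_0,x_0)\cap\partial_e(\mathcal{B}_{C(K,\mathcal{H})})$ lying within $\varepsilon$ of $a$. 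Letting $\varepsilon\to 0$ gives the conclusion. In short, there is no genuine obstacle at this level: all the real work (the wedge construction $W(z_0,\alpha,\beta)$ and the continuous orthogonal fields $\vartheta_j$) has already been carried out in the proof of Lemma \ref{l approximation by non-vanishing functions}, and the corollary is essentially a bookkeeping step that transports that decomposition into the maximal face $A(t_0,x_0)$.
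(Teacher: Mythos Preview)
Your proof is correct and follows essentially the same route as the paper's own argument: apply Lemma \ref{l approximation by non-vanishing functions}$(b)$ to produce the non-vanishing $b_j$ with $b_j(t_0)=x_0$, use the orthogonal fields $v_j$ to write $b_j=\tfrac12(u_j+w_j)$ with $u_j,w_j\in\partial_e(\mathcal{B}_{C(K,\mathcal{H})})$, and observe that $\|b_j(t_0)\|=1$ forces $u_j(t_0)=w_j(t_0)=x_0$, so these extreme points stay in $A(t_0,x_0)$. Your additional explanation of why the JB$^*$-algebra argument from the complex case is unavailable here is accurate and a nice clarification, though not strictly needed for the proof.
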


\begin{proof} Let $a$ be an element in $A(t_0,x_0)$. Since $a(t_0) = x_0\in S(\mathcal{H})$, Lemma \ref{l approximation by non-vanishing functions}$(b)$, for each $\varepsilon>0$ small enough, there exist non-vanishing continuous functions $b_1,\ldots,b_k$ in $\mathcal{B}_{C(K,\mathcal{H})}$ such that $b_j(t_0)= a(t_0)= x_0$, for every $j\in \{1,\ldots,k\}$, and $\left\|a-\frac{1}{k}\sum_{j=1}^{k} b_j\right\|\leq \varepsilon.$ Furthermore, for each $j$ in $\{0,\ldots,k\}$ there exit $v_j\in C(K,\mathcal{H})$ satisfying $\|v_j (t) \|= 1,$ and $(b_j(t)|v_j(t))= 0,$ for all $t\in K,$ and thus $u_j = b_j + (1-\|b_j(\cdot)\|^2)^{\frac12} v_j$, $w_j = b_j - (1-\|b_j(\cdot)\|^2)^{\frac12} v_j$ both lie in $\partial_e(\mathcal{B}_{C(K,\mathcal{H})})$ and $b_j = \frac{1}{2}(u_j +w_j)$. Having in mind that $\|a(t_0)\|= \|x_0\|=1$, we can easily see that $u_j(t_0) = w_j(t_0) = b_j(t_0) = a(t_0) = x_0$, which guarantees that $u_j,w_j\in A(t_0,x_0)\cap \partial_{e}(\mathcal{B}_{C(K,\mathcal{H})})$. Finally, $$\left\| a - \frac{1}{2 k}\sum_{j=1}^{k} (u_j+w_j) \right\| = \left\|a-\frac{1}{k}\sum_{j=1}^{k} b_j \right\|\leq \varepsilon,$$ as desired.
\end{proof}

%We shall finally develop some constructions in order to exhibit a new example of Banach space whose closed unit ball satisfies the strong Mankiewicz property.
Let $H$ be a real or complex Hilbert space, and let $K$ be a compact Hausdorff space. % and consider $C(K,H)$ with the supremum norm.
Let $\mathcal{O}\neq \emptyset$ be an open subset of $K$. We set $p:= \chi_{_\mathcal{O}}$ the characteristic function of $\mathcal{O}$. Note that we cannot, in general, assume that $p\in C(K)$.% but it is enough for our arguments to work with $p$ as an element in the bidual $C(K)^{**}$.
\smallskip

Fix $x_0\in S(H)$. Let us define some subsets of $C(K,H)$ whose elements are constant on $A$:

\begin{align}\label{eq subtriples F,B,N}
F&=F_{x_0\otimes p}:= \{ a\in S(C(K,H)) : ap= x_0 \otimes p \},\\
B&=B_p:=\{ a\in C(K,H) : ap=h \otimes p, \text{ for some }  h\in H \}, \nonumber\\
N&=N_p^{x_0}:= \{ a\in C(K,H) : ap= \mu x_0 \otimes p, \text{ for some }  \mu\in \mathbb{K} \},\nonumber
\end{align} where $\mathbb{K}=\mathbb{R}$ if $H$ is a real Hilbert space and $\mathbb{K}=\mathbb{C}$ if $H$ is a complex Hilbert space.

%We will write $N_\RR$ for $N$ if we assume just real scalars, that is,
%$$N_\RR:= \{ a\in C(K,H) : ap= \mu x_0 \otimes p, \text{ for some }  \mu\in \RR \}.$$

It is not difficult to check that $N$ and $B$ are norm-closed subtriples of $C(K,H)$ with $F\subseteq N \subseteq B \subseteq C(K,H)$.\smallskip

Let us consider the mapping $T:B\to H$ defined by $T(a)=a(t_0)$ for each $a\in B$, where $t_0$ is any element in the open set $\mathcal{O}$. Clearly $T$ is linear. We further know that $T$ is a triple homomorphism. Indeed, if we take $a,b,c\in B$ and write $a(t_0)=x_a$, $b(t_0)=x_b$ and $c(t_0)=x_c$ the constant elements in the Hilbert space associated to each function, we have that
$$T\{a,b,c\}=\{a,b,c\}(t_0)= \frac12 \langle a(t_0) | b(t_0) \rangle c(t_0) + \frac12 \langle c(t_0) | b(t_0) \rangle a(t_0)$$
$$=\frac12 \langle x_a | x_b \rangle x_c + \frac12 \langle x_c | x_b \rangle x_a=\{x_a,x_b,x_c\}=\{T(a),T(b),T(c)\}.$$
The restriction $T|_N:N\to \mathbb{K} x_{0}\subseteq H$ also is a triple homomorphism and $T|_N(a)=a(t_0)=\mu_a x_0 $ for every $a\in N$, where $\mu_a\in\mathbb{K}$.\smallskip

We are now in position to present an extension of \cite[Lemma 19]{MoriOza2018} and Proposition \ref{p triple homomorphism and real JBstar-triple strong Mankiewicz property} to the setting of continuous functions valued in a Hilbert space.

\begin{proposition}\label{p N strong Mankiewicz property}
Let $K$ be a compact Hausdorff space and let $H$ be a complex Hilbert space. Suppose $x_0\in S(H)$ and $\mathcal{O}\neq \emptyset$ is an open subset of $K$. Let us denote $p= \chi_{_\mathcal{O}},$ $N= \{ a\in C(K,H) : ap= \mu x_0 \otimes p, \text{ for some }  \mu\in \mathbb{C} \}$, and $\varphi: N\to \mathbb{C}$ the triple homomorphism defined by $\varphi(a) = \langle a(t_0) | x_0\rangle$ {\rm(}$a\in N${\rm)}, where $t_0$ is any element in $\mathcal{O}$. Then the closed unit ball of $N_\RR ^\varphi:= \varphi^{-1}(\RR)$ coincides with  the closed convex hull of its extreme points. Consequently, $\mathcal{B}_{N_\RR^\varphi}$ satisfies the strong Mankiewicz property.
\end{proposition}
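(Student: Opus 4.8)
The plan is to prove the Krein--Milman type equality
$$\mathcal{B}_{N_\RR^\varphi}=\overline{\hbox{co}}\left(\partial_{e}\big(\mathcal{B}_{N_\RR^\varphi}\big)\right),$$
from which the strong Mankiewicz property of $\mathcal{B}_{N_\RR^\varphi}$ follows at once through Theorem \ref{t Mori-Ozawa strong Mankiewicz}, since the right hand side has non-empty interior in $N_\RR^\varphi$. Write $L:=\overline{\mathcal{O}}$, a closed subset of $K$. By continuity an element $a\in C(K,H)$ belongs to $N$ exactly when $a|_L$ is a constant $\mu x_0$ with $\mu\in\CC$, and to $N_\RR^\varphi$ exactly when moreover $\mu=\varphi(a)\in\RR$; recall that $N_\RR^\varphi$ is a norm-closed real subtriple of $C(K,H)$ (so in particular a real Banach space). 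The first thing I would record is that every $u\in N_\RR^\varphi$ with $\|u(t)\|=1$ for all $t\in K$ is an extreme point of $\mathcal{B}_{N_\RR^\varphi}$: by \eqref{eq extreme points conts functions} (here $H$ is strictly convex) such a $u$ is already an extreme point of $\mathcal{B}_{C(K,H)}$, hence a fortiori an extreme point of the unit ball of the subspace $N_\RR^\varphi$. (A routine bump-function argument using $\dim(H_{_\RR})\geq 2$ shows these are precisely the extreme points of $\mathcal{B}_{N_\RR^\varphi}$, but only the inclusion just proved is needed.)

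The core of the argument is the non-vanishing case, so let $a\in\mathcal{B}_{N_\RR^\varphi}$ with $a(t)\neq 0$ for every $t\in K$. Since $\mathcal{O}\neq\emptyset$, evaluating at some $t_0\in\mathcal{O}$ gives $\mu=\varphi(a)\neq 0$, and after replacing $a$ by $-a$ we may assume $\mu>0$. Put $u:=a/\|a(\cdot)\|\in C(K,H)$; then $\|u(t)\|=1$ for all $t$ and $u|_L\equiv x_0$, so $u\in\partial_{e}(\mathcal{B}_{N_\RR^\varphi})$ by the previous paragraph, while $a(t)=\|a(t)\|\,u(t)$ is pointwise parallel to $u(t)$ and $\|a(\cdot)\|$ is constant (equal to $\mu$) on $L$. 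Let $D:=\{h\in C(K):h|_L\hbox{ is constant}\}$, a commutative unital C$^*$-subalgebra of $C(K)$, let $\delta:D\to\CC$ be the (multiplicative) evaluation at any point of $L$, and let $D_\RR^{\delta}:=\delta^{-1}(\RR)$. The map $\Phi:D\to C(K,H)$, $\Phi(h)(t):=h(t)u(t)$, is an isometric triple monomorphism, it carries $D_\RR^{\delta}$ into $N_\RR^\varphi$ (because $\Phi(h)|_L=(h|_L)x_0$), and it carries each unitary tripotent of $D_\RR^{\delta}$ (that is, each $h$ with $|h|\equiv 1$ and $h|_L\equiv\pm 1$) to a function in $N_\RR^\varphi$ of norm one at every point, i.e. to an element of $\partial_{e}(\mathcal{B}_{N_\RR^\varphi})$. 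Since $a=\Phi\big(\|a(\cdot)\|\big)$ with $\|a(\cdot)\|\in\mathcal{B}_{D_\RR^{\delta}}$, Proposition \ref{p triple homomorphism and real JBstar-triple strong Mankiewicz property} applied to the unital C$^*$-algebra $D$ and the triple homomorphism $\delta$ (equivalently, \cite[Lemma 19]{MoriOza2018}) shows that $\|a(\cdot)\|$, and hence $a=\Phi(\|a(\cdot)\|)$, is a norm limit of finite convex combinations of elements of $\partial_{e}(\mathcal{B}_{N_\RR^\varphi})$.

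It remains to reduce an arbitrary $a\in\mathcal{B}_{N_\RR^\varphi}$, with $\mu=\varphi(a)$, to non-vanishing elements still lying in $N_\RR^\varphi$, and for this I would lean on the Peck-type estimates in Lemma \ref{l approximation by non-vanishing functions}, applied at a point $t_0\in\mathcal{O}$ (so $a(t_0)=\mu x_0\in\RR x_0$). If $\mu\neq 0$, then for $\varepsilon>0$ small enough Lemma \ref{l approximation by non-vanishing functions} produces non-vanishing functions in $\mathcal{B}_{C(K,H)}$ — a single one when $\dim(H)=\infty$, a finite family whose average works when $\dim(H)<\infty$ — which agree with $a$ wherever $\|a(\cdot)\|$ exceeds a fixed multiple of $\varepsilon$, hence on the whole of $L$ (where $\|a(\cdot)\|\equiv|\mu|$), and which approximate $a$ up to $\varepsilon$; these functions therefore lie in $N_\RR^\varphi$ and the preceding paragraph applies to each of them. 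If $\mu=0$, choose by Urysohn's lemma $g\in C(K,[0,1])$ with $g\equiv 1$ on $L$ and support inside the open neighbourhood $\{t:\|a(t)\|<\varepsilon/2\}$ of $L$, and set $\widetilde{a}:=a+\frac{\varepsilon}{2}\,x_0\otimes g$; then $\widetilde{a}\in\mathcal{B}_{N_\RR^\varphi}$ for $\varepsilon\leq 1$, $\|a-\widetilde{a}\|\leq\varepsilon/2$ and $\varphi(\widetilde{a})=\varepsilon/2\neq 0$, so the case already treated applies to $\widetilde{a}$. Combining the three steps, every element of $\mathcal{B}_{N_\RR^\varphi}$ is approximable in norm by finite convex combinations of extreme points of $\mathcal{B}_{N_\RR^\varphi}$, which gives the Krein--Milman type equality, and the final assertion then follows from Theorem \ref{t Mori-Ozawa strong Mankiewicz}. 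The main obstacle is precisely the non-vanishing case: one has to realize that such an $a$ sits inside the commutative C$^*$-algebra $\Phi(D)$ determined by its ``phase'' $u=a/\|a(\cdot)\|$ and that, under this identification, the defining constraint of $N_\RR^\varphi$ becomes exactly the hypothesis of Proposition \ref{p triple homomorphism and real JBstar-triple strong Mankiewicz property}; the passage to non-vanishing functions is then a routine adaptation of Peck's arguments.
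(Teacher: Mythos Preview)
Your argument is correct and follows essentially the same strategy as the paper: reduce to non-vanishing $a$, pass to the phase $u=a/\|a(\cdot)\|$, sit $a$ inside a commutative unital C$^*$-algebra on which the constraint defining $N_\RR^\varphi$ becomes a triple homomorphism into $\CC$, and invoke Proposition~\ref{p triple homomorphism and real JBstar-triple strong Mankiewicz property}. The only real difference is in packaging: the paper works abstractly with the JB$^*$-subtriple of $E_2(u)$ generated by $a$ and $u$ (which is isomorphic to a commutative unital C$^*$-algebra) and then appeals to \cite[Lemma 4]{Sidd2007} to see that its unitaries are extreme in $\mathcal{B}_{C(K,H)}$, whereas you write down the concrete algebra $D=\{h\in C(K):h|_L\ \text{constant}\}$ and the explicit embedding $\Phi(h)=hu$, reading off directly that unitaries go to norm-one-everywhere functions. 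Your route avoids the JB$^*$-algebra/Siddiqui detour, while the paper's route is coordinate-free; both land in the same place.

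Two remarks. First, your sentence ``agree with $a$ wherever $\|a(\cdot)\|$ exceeds a fixed multiple of $\varepsilon$'' is literally true only in the infinite-dimensional branch of Lemma~\ref{l approximation by non-vanishing functions}; in the finite-dimensional branch one has $b_j=\varphi_j\circ a$ with $\varphi_j$ equal to the identity only \emph{outside} the wedge $W(z_j,\alpha_j,\beta_j)$, and that wedge reaches the sphere. What you actually need (and what does hold) is that $a|_L\equiv\mu x_0$ lies in $\RR x_0$ with $|\mu|>\varepsilon$, and the wedges are chosen so that $W\cap\RR x_0\subseteq[-\varepsilon x_0,\varepsilon x_0]$, whence $b_j|_L=a|_L$. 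Second, you are in fact more careful than the paper on this very point: the paper writes ``without loss of generality, via Lemma~\ref{l approximation by non-vanishing functions}, $a$ is non-vanishing'' without spelling out why the approximants remain in $N_\RR^\varphi$ (i.e.\ constant on all of $L$, not just equal to $\mu x_0$ at $t_0$); your Urysohn bump with $g\equiv 1$ on $L$ handles the $\mu=0$ case cleanly.
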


\begin{proof} Let us pick $a\in\mathcal{B}_{N_\RR^\varphi}$ and any $t_0\in \mathcal{O}$. Since $\varphi(a)=\lambda \in \RR$, we can assure that $a(t_0) = \lambda x_0\in \mathbb{R} x_0$. Without loos of generality, we can assume, via Lemma \ref{l approximation by non-vanishing functions}, that $a$ is a non-vanishing function. Define now $u\in C(K,H)$ given by $u(t):=\frac{a(t)}{\|a(t)\|}$, for every $t\in K$. Observe that $u\in\partial_{e}(\mathcal{B}_{C(K,H)})$. We further know that $u$ lies in $N^\varphi_\RR$ because $u p = \frac{a(t_0)}{|a(t_0)|} x_0 \otimes p$ and $\varphi (u) = \frac{a(t_0)}{|a(t_0)|}.$\smallskip
	
To simplify the notation we set $E= C(K,H)$. Since $a\in E^1 (u)$ (actually $a\in N^1 (u)$), the JB$^*$-subtriple of $E$ generated by $a$ and $u$ is JB$^*$-triple isomorphic (and hence isometric) to a commutative unital C$^*$-algebra. Let $A$ denote the JB$^*$-subtriple generated by $a$ and $u$. Since $a,u\in N$, it follows that $A\subseteq N $, and hence the restriction $\varphi|_A:A\to \CC$ is a non-zero triple homomorphism. By applying Proposition \ref{p triple homomorphism and real JBstar-triple strong Mankiewicz property} to the real JB$^*$-triple $A^{\varphi|_A}_\RR$ we conclude that $a\in A^{\varphi|_A}_\RR$ can be approximated in norm by convex combinations of unitary tripotents in ${{A^{\varphi|_A}_\RR}}$.\smallskip

Finally, every unitary element in ${{A^{\varphi|_A}_\RR}}$ is a unitary element in the JB$^*$-algebra $E_2(u)$. We observe now that, since $u$ is an extreme point of $\mathcal{B}_{E}$, every unitary element in $E_2(u)$ is an extreme point of $\mathcal{B}_{E}$ (cf. \cite[Lemma 4]{Sidd2007}), and thus every unitary element in ${{A^{\varphi|_A}_\RR}}$ belongs to $\partial_{e}(\mathcal{B}_{N^{\varphi}_\RR})$, because $A^{\varphi|_M}_\RR\subseteq N^\varphi_\RR$. Theorem \ref{t Mori-Ozawa strong Mankiewicz} gives the final statement.
\end{proof}

We shall next establish a real version of Proposition \ref{p N strong Mankiewicz property}. For reasons which will be better understood at the end of the next section, we shall restrict our interest to the finite dimensional case.

\begin{proposition}\label{p N strong Mankiewicz property rel Hilbert spaces}
Let $K$ be a compact Hausdorff space and let $\mathcal{H}$ be a finite-dimensional real Hilbert space with dim$(\mathcal{H})\geq 2$. Suppose $x_0\in S(\mathcal{H})$ and $\mathcal{O}\neq \emptyset$ is an open subset of $K$. Let us denote $p= \chi_{_\mathcal{O}},$ and $$N= \{ a\in C(K,\mathcal{H}) : a p= \mu x_0 \otimes p, \text{ for some }  \mu\in \mathbb{R} \}.$$ Then the closed unit ball of the real JB$^*$-triple $N$ coincides with the closed convex hull of its extreme points. Consequently, $\mathcal{B}_{N}$ satisfies the strong Mankiewicz property.
\end{proposition}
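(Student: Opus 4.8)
The plan is to avoid the naive approach of splitting an $a\in\mathcal{B}_N$ directly into extreme points by means of a pointwise orthogonal unit field (as in Lemma~\ref{l approximation by non-vanishing functions}$(b)$ and Corollary~\ref{c convex combinations of unitaries in maximal faces real}): that strategy breaks down here, because such a perturbation moves the values of $a$ on $\overline{\mathcal O}$ away from the line $\RR x_0$ and hence leaves the subtriple $N$. Instead I would realize $\mathcal{B}_N$ as the closed convex hull of the two ``sign faces'' $F_{x_0\otimes p}$ and $F_{-x_0\otimes p}$, and treat each of these — after collapsing $\overline{\mathcal O}$ to a single point — with Corollary~\ref{c convex combinations of unitaries in maximal faces real}.

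First I would pass to a quotient. Every $a\in N$ is constant on $\overline{\mathcal O}$ with value in $\RR x_0$, so, writing $\pi\colon K\to\tilde K:=K/\overline{\mathcal O}$ for the quotient onto the compact Hausdorff space obtained by collapsing $\overline{\mathcal O}$ to a point $\star$, the isometric pull‑back $F\mapsto F\circ\pi$ identifies $N$ (as a real Banach space, in fact as a real JB$^*$‑triple) with $\tilde N:=\{F\in C(\tilde K,\mathcal H):F(\star)\in\RR x_0\}$, carrying $\partial_{e}(\mathcal{B}_N)$ onto $\partial_{e}(\mathcal{B}_{\tilde N})$ and the strong Mankiewicz property across. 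Thus we may assume $N=\tilde N$; then $F_{\pm x_0\otimes p}$ corresponds to $F^{\pm}:=\{F\in\mathcal{B}_{C(\tilde K,\mathcal H)}:F(\star)=\pm x_0\}$, and since $F(\star)=\pm x_0$ forces $\|F\|=1$ these are exactly the maximal faces $A(\star,\pm x_0)$ of $\mathcal{B}_{C(\tilde K,\mathcal H)}$ (in the notation preceding Corollary~\ref{c convex combinations of unitaries in maximal faces}); note $F^{\pm}\subseteq N$.

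Next I would establish $\mathcal{B}_N=\overline{\operatorname{co}}(F^{+}\cup F^{-})$, which is the one genuinely new point. Given $a\in\mathcal{B}_N$ with $a(\star)=\mu x_0$, $|\mu|\le1$, and $\varepsilon>0$, the set $\{t\in\tilde K:\|a(t)-\mu x_0\|<\varepsilon\}$ is an open neighbourhood of $\star$; by Urysohn's lemma pick $\psi\in C(\tilde K,[0,1])$ with $\psi(\star)=1$ which vanishes off that set, and put $a^{\pm}:=\psi(\pm x_0)+(1-\psi)a$. Then $\|a^{\pm}(t)\|\le\psi(t)+(1-\psi(t))\|a(t)\|\le1$ and $a^{\pm}(\star)=\pm x_0$, so $a^{\pm}\in F^{\pm}$; moreover, with $\lambda^{\pm}:=\tfrac{1\pm\mu}{2}\in[0,1]$ (so $\lambda^{+}+\lambda^{-}=1$ and $\lambda^{+}-\lambda^{-}=\mu$) one computes $\lambda^{+}a^{+}+\lambda^{-}a^{-}=\mu\psi x_0+(1-\psi)a$, whence $\|a-(\lambda^{+}a^{+}+\lambda^{-}a^{-})\|=\|\psi(a-\mu x_0)\|\le\varepsilon$. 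Letting $\varepsilon\to0$ gives $a\in\overline{\operatorname{co}}(F^{+}\cup F^{-})$, and the reverse inclusion is immediate.

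Finally, since $F^{\pm}=A(\star,\pm x_0)$ and $\mathcal H$ is a finite‑dimensional real Hilbert space with $\dim(\mathcal H)\ge2$, Corollary~\ref{c convex combinations of unitaries in maximal faces real} shows that each element of $F^{\pm}$ is approximated in norm by finite convex combinations of elements of $A(\star,\pm x_0)\cap\partial_{e}(\mathcal{B}_{C(\tilde K,\mathcal H)})$; any such $u$ has $u(\star)=\pm x_0\in\RR x_0$, so $u\in N$, and being an extreme point of the larger ball $\mathcal{B}_{C(\tilde K,\mathcal H)}$ it is an extreme point of $\mathcal{B}_N$. Hence $F^{\pm}\subseteq\overline{\operatorname{co}}(\partial_{e}(\mathcal{B}_N))$, and combining with the previous paragraph
$$\mathcal{B}_N=\overline{\operatorname{co}}(F^{+}\cup F^{-})\subseteq\overline{\operatorname{co}}(\partial_{e}(\mathcal{B}_N))\subseteq\mathcal{B}_N,$$
so $\mathcal{B}_N=\overline{\operatorname{co}}(\partial_{e}(\mathcal{B}_N))$. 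This hull has non‑empty interior in $N$ (it equals $\mathcal{B}_N$), so Theorem~\ref{t Mori-Ozawa strong Mankiewicz} gives that every convex body in $N$, in particular $\mathcal{B}_N$, has the strong Mankiewicz property. The main obstacle is thus conceptual rather than computational: one cannot split $a$ into extreme points of $\mathcal{B}_N$ in one stroke — the splitting must factor through the two sign faces $F_{\pm x_0\otimes p}$, each of which keeps the values on $\overline{\mathcal O}$ pinned to a unit vector, so that the approximating extreme points remain inside $N$.
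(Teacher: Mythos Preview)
Your proof is correct and takes a genuinely different route from the paper. Both begin with the same quotient step, collapsing $\overline{\mathcal O}$ to a single point $\star$ so that $N$ becomes $\{F\in C(\tilde K,\mathcal H):F(\star)\in\RR x_0\}$. From there the arguments diverge. The paper approximates $a\in\mathcal{B}_N$ by non-vanishing functions $b_j$ via Lemma~\ref{l approximation by non-vanishing functions}$(b)$, then for each $b_j$ builds a rank-two real JB$^*$-subtriple $C(\tilde K,\RR)u\oplus C(\tilde K,\RR)v$ (where $u=b_j/\|b_j\|$ and $v$ is the orthogonal unit field), identifies it explicitly with $C(\tilde K,\CC)$ via $fu+gv\mapsto f+ig$, and then invokes \cite[Lemma~19]{MoriOza2018} on $C(\tilde K)^{\star}_{\RR}$ to produce the approximating unitaries. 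Your argument instead factors the problem through the two ``sign faces'': you show directly, by a clean Urysohn bump, that $\mathcal{B}_N=\overline{\operatorname{co}}(A(\star,x_0)\cup A(\star,-x_0))$, and then feed each maximal face into Corollary~\ref{c convex combinations of unitaries in maximal faces real}, noting that the resulting extreme points of $\mathcal{B}_{C(\tilde K,\mathcal H)}$ automatically lie in $N$. This is shorter and more conceptual --- it reuses Corollary~\ref{c convex combinations of unitaries in maximal faces real} rather than reproving a variant of it, and it sidesteps the explicit rank-two identification with $C(\tilde K)$. The paper's route, by contrast, exhibits concretely how the Mori--Ozawa Lemma~19 mechanism enters, which is closer in spirit to the complex case in Proposition~\ref{p N strong Mankiewicz property}.
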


\begin{proof}
Every function $a\in N$ is constant on the compact subset $\overline{\mathcal{O}}$. By replacing $K$ with the compact quotient space $\tilde{K} = K/ \overline{\mathcal{O}}$, we can assume without loss of generality, that $\overline{\mathcal{O}}$ is a single point $t_0$ in $K$ and $N$ is the real JB$^*$-subtriple of $C(K,\mathcal{H})$ of all functions $a\in C(K,\mathcal{H})$ such that $a(t_0)\in \mathbb{R} x_0$.\smallskip

Let $a\in \mathcal{B}_{N}$. If $a(t_0) =0,$ arguing as in the proof of Lemma \ref{l approximation by non-vanishing functions}$(b)$, for each $0<\varepsilon<1$, we can find $\widetilde{a}\in \mathcal{B}_{C(K,\mathcal{H})}$, with $\widetilde{a}(t_0)=(\varepsilon/2) x_0$ and such that $\|a-\widetilde{a}\|\leq \varepsilon/2$. Clearly, $\tilde{a}\in \mathcal{B}_{N}$ and does not vanish on $t_0$. By  Lemma \ref{l approximation by non-vanishing functions}$(b)$ applied to $\tilde{a}$, there exist non-vanishing continuous functions $b_1,\ldots,b_k$ in $\mathcal{B}_{C(K,\mathcal{H})}$ such that $b_j(t_0)= \tilde{a}(t_0)=(\varepsilon/2) x_0$, for every $j\in \{1,\ldots,k\}$, and $\displaystyle \left\|\tilde{a}-\frac{1}{k}\sum_{j=1}^{k} b_j\right\|\leq \varepsilon.$
Furthermore, for each $j$ in $\{0,\ldots,k\}$ there exit $v_j\in C(K,\mathcal{H})$ satisfying $\|v_j (t) \|= 1,$ and $(b_j(t)|v_j(t))= 0,$ for all $t\in K.$ %and thus $u_j = b_j + (1-\|b_j(\cdot)\|^2)^{\frac12} v_j$, $w_j = b_j - (1-\|b_j(\cdot)\|^2)^{\frac12} v_j$ both lie in $\partial_e(\mathcal{B}_{C(K,\mathcal{H})})$ and $b_j = \frac{1}{2}(u_j +w_j)$.
\smallskip

To simplify the notation we write $E=C(K,\mathcal{H})$. Let us fix a non-vanishing function $b\in \mathcal{B}_{E}$ with $b (t_0) \in \mathbb{R} x_0$ and $v\in E$ satisfying $\|v (t) \|= 1,$ and $(b(t)|v(t))= 0,$ for all $t\in K.$ We set $u(\cdot) :=\frac{b(\cdot)}{\|b(\cdot)\|}\in E.$ It is not hard to check that $u$ and $v$ are tripotents in $E$ with $E_2(u) = E^{1} (u) = C(K,\mathbb{R}) u$, $E_2(v) = E^{1} (v) = C(K,\mathbb{R}) v$, $\{u,u,v\} = \frac12 v$, and $\{v,v,u\} = \frac12 u$. \smallskip

Let us consider the real JB$^*$-subtriple $F= E_2(u) \oplus E_2(v) = C(K,\mathbb{R}) u\oplus C(K,\mathbb{R}) v$. Clearly, $N\cap F$ is a non-trivial real JB$^*$-subtriple of $F$ containing $b$. The mapping $\Psi: F\to C(K)$, $\Psi (f u + g v) =f + i g$, is a surjective isometric triple isomorphism between real JB$^*$-triples which maps $N\cap F$ to $C(K)^{t_0}_{\mathbb{R}} = \{h\in C(K) : h(t_0) \in \mathbb{R}\}$. It follows from \cite[Lemma 19]{MoriOza2018} that every element in the closed unit ball of $C(K)^{t_0}_{\mathbb{R}}$ (in particular $\Psi (b)$) can be approximated in norm by convex combinations of unitary tripotents in $C(K)^{t_0}_{\mathbb{R}}$. Finally, if $\upsilon$ is a unitary element in $C(K)^{t_0}_{\mathbb{R}}$, then $w=\Re\hbox{e} (\upsilon) u + \Im\hbox{m} (\upsilon) v\in N\cap F$ with $w(t_0)= \Re\hbox{e} (\upsilon) (t_0) u (t_0) = \upsilon(t_0) u (t_0)\in \mathbb{R} x_0$ and $\|w(t) \|^2_{\mathcal{H}} = | \Re\hbox{e} (\upsilon) (t)|^2 + |\Im\hbox{m} (\upsilon) (t)|^2 = 1,$ for all $t\in K,$ witnessing that $w\in \partial_{e} (\mathcal{B}_{E})$.
\end{proof}

\section{$C(K,H)$ satisfies the Mazur--Ulam property}

Throughout this section $K$ and $H$ will denote a compact Hausdorff space and a complex Hilbert space with dim$(H)\geq 2$, respectively.\smallskip

%Even though the case in which dim$(H)=1$ (that is, dim$(H_\RR)=2$) is already known since $C(K)$ is included in the class of unital C$^*$-algebras (see Theorem \ref{t MO Mazur--Ulam unital Cstar algebras}), the development made in this section can be also applied to it.
%We shall always consider $H$ as a real Hilbert space, the inner product of $H$ will be denoted by the symbol $(.|.)$, and as in page \pageref{eq ref real JB*-triples for C(K,H)}, we shall regard $C(K,H)$ as a real JB$^*$-triple with triple product $\{a,b,c\} :=1/2 (a|b) c +1/2 (c|b) a.$\smallskip
%
%

Given an element $y_0$ in a Banach space $Y$, we write $\tau_{y_0}$ for the translation by the element $y_0$, that is, $\tau_{y_0} (y ) = y+ y_0$, for all $y\in Y$.\smallskip

Our first lemma is essentially contained in %\cite{Di:C},
\cite{Liu2007} %\cite[Lemma 2.1]{JVMorPeRa2017},
and \cite[Lemma 2.1]{CuePer}, and its proof can be easily deduced from the arguments in the just quoted references.

\begin{lemma}\label{l existence of support functionals for the image of a face} Let $\Delta : S(C(K,H))\to S(Y)$ be a surjective isometry, where $Y$ is a real Banach space. Then for each $t_0\in K$ and each $x_0\in S(H)$ the set $$\hbox{supp}_\Delta(t_0,x_0) := \{\psi\in Y^* : \|\psi\|=1,\hbox{ and } \psi^{-1} (\{1\})\cap \mathcal{B}_{Y} = \Delta(A(t_0,x_0)) \}$$ is a non-empty weak$^*$-closed face of $\mathcal{B}_{Y^*}$. $\hfill\Box$
\end{lemma}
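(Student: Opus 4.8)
The plan is to recall the structure of maximal convex subsets of the two unit spheres and transfer it through $\Delta$. First I would note that, as remarked just before the statement, $A(t_0,x_0)$ is a maximal convex subset of the unit sphere $S(C(K,H))$. Since $\Delta$ is a surjective isometry between unit spheres, it is in particular a bijection preserving the metric, and a standard argument (going back to the techniques in \cite{Liu2007} and reproduced in \cite[Lemma 2.1]{CuePer}) shows that $\Delta$ carries maximal convex subsets of $S(C(K,H))$ onto maximal convex subsets of $S(Y)$; concretely, a subset $C\subseteq S(X)$ is a maximal convex subset of the sphere precisely when it is a maximal set of diameter at most $2$ that is ``antipodally closed'' in a suitable metric sense, and such a characterization is manifestly preserved by a surjective isometry. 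Hence $\Delta(A(t_0,x_0))$ is a maximal convex subset of $S(Y)$.

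Next I would invoke the classical description of maximal convex subsets of the unit sphere of a real Banach space: a subset $M$ of $S(Y)$ is a maximal convex subset of $S(Y)$ if and only if there exists a norm-one functional $\psi\in Y^*$ with $M = \psi^{-1}(\{1\})\cap \mathcal{B}_Y$ (this is the Eidelheit/Hahn--Banach separation fact that every maximal convex subset of the sphere is a ``slice at level $1$'' by a support functional, together with the converse that any such slice is a maximal convex subset of the sphere, provided it is nonempty — and here it is nonempty because it equals $\Delta(A(t_0,x_0))$, which is nonempty as $A(t_0,x_0)\ne\emptyset$). Applying this to $M=\Delta(A(t_0,x_0))$ produces at least one $\psi$ with $\|\psi\|=1$ and $\psi^{-1}(\{1\})\cap\mathcal{B}_Y=\Delta(A(t_0,x_0))$, so the set $\mathrm{supp}_\Delta(t_0,x_0)$ is nonempty.

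It remains to check that $\mathrm{supp}_\Delta(t_0,x_0)$ is a weak$^*$-closed face of $\mathcal{B}_{Y^*}$. For weak$^*$-closedness, fix $a_0\in\Delta(A(t_0,x_0))$; then every $\psi\in\mathrm{supp}_\Delta(t_0,x_0)$ satisfies $\psi(a_0)=1$, and conversely the set $\{\psi\in\mathcal{B}_{Y^*}:\psi(y)=1\ \forall\, y\in\Delta(A(t_0,x_0))\} = \Delta(A(t_0,x_0))^{\prime}$ is exactly $\mathrm{supp}_\Delta(t_0,x_0)$ — one inclusion is the definition, and for the reverse inclusion one uses that $\Delta(A(t_0,x_0))$ is a maximal convex subset of the sphere, so any $\psi$ in $\mathcal{B}_{Y^*}$ attaining $1$ on all of it must have its level-$1$ slice contain this maximal set, forcing equality by maximality. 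Thus $\mathrm{supp}_\Delta(t_0,x_0) = \Delta(A(t_0,x_0))^{\prime}$, which is a weak$^*$-closed face of $\mathcal{B}_{Y^*}$ by the general remark in Section~\ref{sec: facial real} that $F^\prime$ is always weak$^*$-closed and a face. This establishes all the assertions.

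\textbf{Main obstacle.} The delicate point is the converse direction in identifying $\mathrm{supp}_\Delta(t_0,x_0)$ with $\Delta(A(t_0,x_0))^{\prime}$, i.e. verifying that if $\psi\in\mathcal{B}_{Y^*}$ attains the value $1$ on every point of the maximal convex set $\Delta(A(t_0,x_0))$ then in fact $\psi^{-1}(\{1\})\cap\mathcal{B}_Y$ is \emph{equal} to (not just contains) $\Delta(A(t_0,x_0))$; this rests on the maximality of $\Delta(A(t_0,x_0))$ among convex subsets of the sphere, which in turn is what requires the transfer of maximality through $\Delta$ and hence the metric characterization of maximal convex subsets of a unit sphere from \cite{Liu2007,CuePer}. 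Everything else is routine separation theory and the bookkeeping already set up in the excerpt.
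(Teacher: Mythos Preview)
Your argument is correct and matches the paper's approach exactly: the paper gives no proof at all, only the reference to \cite{Liu2007} and \cite[Lemma 2.1]{CuePer}, and your sketch is precisely a reconstruction of what those references provide (transfer of maximal convex subsets under a surjective isometry of spheres, together with the Hahn--Banach description of maximal convex subsets of $S(Y)$ and the identification $\mathrm{supp}_\Delta(t_0,x_0)=\Delta(A(t_0,x_0))^{\prime}$).

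Two small cosmetic points that do not affect the argument: first, the ``if and only if'' you state is only an ``only if'' in general --- a set of the form $\psi^{-1}(\{1\})\cap\mathcal{B}_Y$ need not be a \emph{maximal} convex subset of the sphere (take $Y=\ell_\infty^2$ and $\psi=(\tfrac12,\tfrac12)$) --- but you only use the direction that holds, plus the fact that such a set is convex and lies in $S(Y)$; second, your informal ``metric characterization'' of maximal convex subsets is not quite a characterization as written, but since you immediately defer to \cite{Liu2007,CuePer} for the actual preservation statement this is harmless.
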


In the hypothesis of the previous lemma, it is known that each $A(t_0,x_0)$ is an intersection face in the sense employed in \cite{MoriOza2018}. % (it is enough to combine \cite[Proposition 2.4 and its proof and Corollary 2.5]{FerGarPeVill17} with \cite[Theorem 3.10]{EdFerHosPe2010} and \cite[Corollary 1]{FerPe10}).
Therefore, Lemma 8 in \cite{MoriOza2018} assures that $\Delta (-A(t_0,x_0)) = - \Delta (A(t_0,x_0)),$ and consequently,
\begin{equation}\label{eq post lemma support} \psi \Delta (a) = -1, \hbox{ for all } a\in -A(t_0,x_0) \hbox{,  and all } \psi\in \hbox{supp}_\Delta(t_0,x_0).
\end{equation}

%The arguments in the proof of \cite[Propositions 2.6 and 3.1]{CuePer} are also valid to obtain the following result.
%
%\begin{lemma}\label{l tech 2} Let $\Delta : S(C(K,H))\to S(Y)$ be a surjective isometry, where $Y$ is a real or complex Banach space. Then the following statements hold:\begin{enumerate}[$(a)$]\item $\psi \Delta(a) =0$ for every $t_0\in K$, $x_0\in S(H)$, $\psi \in \hbox{supp}_\Delta(t_0,x_0),$ and $a\in S(C(H,K))$ with $a(t_0)=0$;
%\item ??????? Let $t_0$ be a fixed point in $K$, and let $a$ be an element in $S(C(H,K))$. If $\psi \Delta(a) =0$ for every $x_0\in S(H)$, and every $\psi \in \hbox{supp}_\Delta(t_0,x_0),$ then $a (t_0) =0$.$\hfill\Box$
%\end{enumerate}
%\end{lemma}

The following technical lemma might be known, although an explicit reference is out from our knowledge. We include here a proof, which seems to be new, and is based on techniques of real JB$^*$-triples.

\begin{lemma}\label{l PeSta generalized} Let $(\mathcal{H}, (.|.))$ be a real Hilbert space, $K$ a compact Hausdorff space, and $\varphi$ a non-zero functional in $C(K,\mathcal{H})^*$. Suppose there exist $t_0\in K,$ $x_0\in S(\mathcal{H}),$ and an open neighborhood $\mathcal{O}$ of $t_0$ satisfying $\varphi (b) = \|\varphi\|$ for every $b\in A(t_0,x_0)$ whose cozero-set is contained in $\mathcal{O}$. Then $\varphi (a) = \|\varphi\| ( a(t_0) | x_0) = \|\varphi\| (x_0^*\otimes \delta_{t_0}) (a) $, for all $a\in C(K,\mathcal{H})$.
\end{lemma}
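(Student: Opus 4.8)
The plan is to normalize so that $\|\varphi\| = 1$, and then to show that $\varphi$ coincides with the evaluation-type functional $x_0^*\otimes\delta_{t_0}$ by identifying its support tripotent in the real JBW$^*$-triple $C(K,\mathcal H)^{**}$. Write $E = C(K,\mathcal H)$, regarded as a real JB$^*$-triple with the triple product $\{a,b,c\} = \tfrac12(a|b)c + \tfrac12(c|b)a$. First I would record that the hypothesis says $\varphi$ attains its norm at every norm-one element of the face $A(t_0,x_0)$ whose cozero-set sits inside $\mathcal O$; by Lemma~\ref{l approximation by non-vanishing functions} (more precisely the construction used there, taking functions supported near $t_0$) one can exhibit at least one such $b$, so in particular $\|\varphi\|$ is attained and $s(\varphi)$, the support tripotent of $\varphi$ in $E^{**}$, is well defined. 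The key object is the norm-closed face $\{\varphi\}_{\prime,E} = \{a\in\mathcal B_E : \varphi(a)=1\}$ of $\mathcal B_E$: the hypothesis guarantees it contains the set $N$ of all $b\in A(t_0,x_0)$ with cozero-set in $\mathcal O$.

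Next I would use the facial machinery developed in Section~\ref{sec: facial real}. By Theorem~\ref{t facial structure norm-closed faces in a real JB-triple} there is a unique compact tripotent $u\in E^{**}$ with $\{\varphi\}_{\prime,E} = (u + \mathcal B_{E^{**}_0(u)})\cap E$; moreover, since $\varphi\in\{u\}_{\prime,E^*}$ and $\varphi$ is norm-attaining, its support tripotent $s(\varphi)\le u$ and $\varphi = \varphi P_2(s(\varphi))$ with $\varphi|_{E^{**}_2(s(\varphi))}$ faithful positive normal. The goal becomes: show $s(\varphi)$ is (essentially) the point-mass tripotent $x_0\otimes\delta_{t_0}$ sitting in the rank-one Cartan factor $\mathcal H$ at the coordinate $t_0$ inside the atomic part $\bigoplus^{\ell_\infty}_{t\in K}\mathcal H$ of $E^{**}$ described in the proof of Theorem~\ref{t C(K,H) cannot be isometrically isomorphic to the sphere of a real W*-algebra}. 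Concretely, I would argue that every $b\in N$ satisfies $b(t_0) = x_0$, so $s(\varphi)\le u(b)$ for all such $b$; taking $b$ with arbitrarily small cozero-set forces $u\le$ (the range tripotent of such $b$'s), and the infimum of the range tripotents of functions in $A(t_0,x_0)$ supported in shrinking neighbourhoods of $t_0$ is exactly the tripotent $e_{t_0,x_0}$ which, in the coordinate picture, is $x_0$ in the $t_0$-th copy of $\mathcal H$ and $0$ elsewhere. Hence $s(\varphi) \le u \le e_{t_0,x_0}$, and since $e_{t_0,x_0}$ is minimal (the rank-one factor $\mathcal H$ has no proper nonzero subtripotents), we get $s(\varphi) = u = e_{t_0,x_0}$.

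Once $s(\varphi) = e_{t_0,x_0}$ is established, the conclusion is immediate: $E^{**}_2(e_{t_0,x_0})$ is one-dimensional, namely $\mathbb R\, e_{t_0,x_0}$, the unique normal state on it is $z\mapsto (z|e_{t_0,x_0})$ read off in the $t_0$-coordinate, and $P_2(e_{t_0,x_0})$ applied to $a\in E\subseteq E^{**}$ returns $(a(t_0)|x_0)\,e_{t_0,x_0}$. Therefore $\varphi(a) = \varphi(P_2(e_{t_0,x_0})a) = (a(t_0)|x_0) = (x_0^*\otimes\delta_{t_0})(a)$, and re-inserting the normalization gives $\varphi(a) = \|\varphi\|(a(t_0)|x_0) = \|\varphi\|(x_0^*\otimes\delta_{t_0})(a)$ for all $a\in C(K,\mathcal H)$, as claimed.

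The main obstacle I anticipate is the middle step: rigorously computing the infimum (in $\tilde{\mathcal U}_c(E^{**})$) of the range tripotents $r(b)$ as $b$ ranges over functions in $A(t_0,x_0)$ with cozero-set shrinking to $\{t_0\}$, and checking it equals the point-mass tripotent $e_{t_0,x_0}$. This requires understanding $E^{**}$ concretely enough to locate $r(b)$ — for a non-vanishing $b\in\mathcal B_E$ with $b/\|b(\cdot)\|$ the associated unitary, $r(b)$ is that unitary, and its behaviour under the weak$^*$-limit as the support collapses needs the identification of the atomic part plus a monotone-net argument (using that compact tripotents are closed under such infima, and Theorem~\ref{t characterization relatively open weak* closed faces real case} to keep track of relative openness). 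An alternative, possibly cleaner route that sidesteps part of this is to test $\varphi$ directly against carefully chosen elements of $S(C(K,\mathcal H))$: show $\varphi$ kills every $a$ vanishing at $t_0$ (by considering $b\pm\varepsilon a$ with $b\in N$ chosen so that $b\pm\varepsilon a$ stays in the relevant face after renormalization) and that on constants near $t_0$ it behaves like $(\cdot|x_0)$; this is more hands-on but avoids invoking the full second-dual picture, at the cost of more careful Urysohn-type bookkeeping.
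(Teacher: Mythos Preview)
Your approach via the support tripotent in the atomic part of $E^{**}$ is workable, but the paper takes a substantially more direct route that avoids the bidual facial machinery and the atomic decomposition altogether. The paper normalizes $\|\varphi\|=1$, takes the \emph{constant} tripotent $e=x_0\otimes 1\in E$ (not a tripotent in $E^{**}$), observes $\varphi(e)=1$, and then applies \cite[Lemma~2.7]{PeSta2001} to obtain $\varphi=\varphi P^1(e)$. A short computation shows $P^1(e)(a)=x_0\otimes(a\,|\,x_0)$, so $\varphi$ factors through the real JB$^*$-subtriple $C(K,\mathbb R)x_0\cong C(K,\mathbb R)$. One is then left with a norm-one functional $\psi$ on $C(K,\mathbb R)$ taking the value $1$ on all bump functions at $t_0$ with cozero in $\mathcal O$; a routine Urysohn argument forces $\psi=\delta_{t_0}$. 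This two-step reduction (first to $C(K,\mathbb R)$ via the Peirce projection of a tripotent living in $E$, then to $\delta_{t_0}$ by elementary means) is shorter and sidesteps the obstacle you flag in your final paragraph.

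Your structural route can also be made to work, but the infimum computation you worry about is unnecessary. For a single $b=x_0\otimes f$ with $f\in C(K,[0,1])$, $f(t_0)=1$, $f^{-1}(\{1\})=\{t_0\}$ and $\operatorname{coz}(f)\subset\mathcal O$, one checks directly that $b^{[2n-1]}=x_0\otimes f^{2n-1}\to x_0\otimes\chi_{\{t_0\}}=e_{t_0,x_0}$ weak$^*$, so $u(b)=e_{t_0,x_0}$ already, and $s(\varphi)\le u(b)$ together with minimality of $e_{t_0,x_0}$ finishes without taking any net or infimum. Your ``hands-on'' alternative at the end is in fact the spirit of the paper's final Urysohn step, but the paper's key simplification is the preliminary reduction to $C(K,\mathbb R)$ via $P^1(e)$, which makes that hands-on step almost trivial.
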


\begin{proof} Let us assume that $\|\varphi\|=1$. Let $1$ denote the unit element in $C(K)$. Since the element $e= x_0\otimes 1$ is a non-zero tripotent in the real JB$^*$-triple $C(K,\mathcal{H})$ with $e\in A(t_0,x_0)$, it follows from the hypothesis that $\varphi (e) = \|\varphi\| =1$. An application of \cite[Lemma 2.7]{PeSta2001} shows that $\varphi (a) = \varphi P^1 (e) (a)$, for every $a\in C(K,\mathcal{H})$. It is not hard to see that $$\{e,a,e\} (t) = ( e(t) | a(t))  e (t) = ( a(t) | x_0 )  x_0 = x_0 \otimes ( a | x_0),$$ and hence $P^{1} (e) (a) = ( a | x_0 )  x_0 = x_0 \otimes ( a | x_0),$ and $$ \varphi (a) = \varphi P^1 (e) (a) = \varphi (x_0 \otimes ( a | x_0) ) $$ for every $a\in C(K,\mathcal{H})$. This shows that $\varphi= \varphi|_{C(K,\mathbb{R} x_0)}$ can be identified with a norm-one functional in $C(K,\mathbb{R} x_0)^*\cong C(K,\mathbb{R})^*$. The norm-one functional $\psi = \varphi|_{C(K,\mathbb{R} x_0)}\in C(K,\mathbb{R})^*$ satisfies that $\psi (f) =1$ for every $f\in C(K)$ with $\|f\|= 1= f(t_0)$. It is not hard to see, via Urysohn's lemma, that $\ker (\psi)$ contains all $f\in \mathcal{B}_{C(K,\mathbb{R})}$ vanishing on a open neighborhood of $t_0$ contained in $\mathcal{O}$. Therefore, $\psi$ vanishes on every function $f\in C(K)$ with $f(t_0)=0$, and thus $\psi (g) = g(t_0)$ for all $g\in C(K,\mathbb{R})$, and consequently $\varphi (a) = ( a(t_0) | x_0)$, for all $a\in C(K,\mathcal{H})$.
\end{proof}

Accordingly to the notation in \cite{MoriOza2018}, given a face $F$ contained in the unit sphere of a Banach space $X$ and $\lambda\in [-1,1]$ we set $$F_{\lambda} :=\left\{s\in S(X) : \hbox{dist}(x,F)\leq 1-\lambda,\  \hbox{dist}(x,-F)\leq 1+\lambda \right\} $$
$$=\left\{s\in S(X) : \hbox{dist}(x,F)= 1-\lambda,\  \hbox{dist}(x,-F)= 1+\lambda \right\}.$$

Let $p$ be a projection in the bidual, $A^{**}$, of a C$^*$-algebra $A^{**}$. Following  \cite{AkPed92,EdRu96}, we say that $p$ is \emph{compact} if $p$ is closed relative to $A$ (i.e. $A\cap (1-p) A^{**} (1-p)$ is weak$^*$-dense in $(1-p) A^{**} (1-p)$) and there exists a norm-one element $x\in A^{+}$ such that $p\leq x$ (compare \cite[page 422]{AkPed92}). In our setting, for each closed (i.e. compact) subset $C\subseteq K$, the projection $\chi_{_C}$ is compact in $C(K)^{**}$ and rarely lies in $C(K)$. \smallskip

As in \cite{MoriOza2018}, for $\lambda\in [-1,1]$, we define $$F^{A}(p,\lambda) :=\left\{ x\in S(A) : x p = p x = \lambda p\right\}=S(A)\cap \{\lambda p + y : y \in \mathcal{B}_{(1-p)A^{**}(1-p)}\}.$$ We observe that $F^{A}(p,1)=F^{A} (p) = A\cap (p \oplus \mathcal{B}_{(1-p)A^{**}(1-p)})$ is precisely the norm-closed face of $\mathcal{B}_{A}$ associated with the projection $p$ (compare \cite{AkPed92}). In \cite[Lemma 17]{MoriOza2018} it is established that, under these circumstances we have $F^{A}(p,\lambda)= \left(F^{A} (p)\right)_{\lambda}.$ Our next goal is to obtain a version of this fact in the setting of continuous functions valued in a Hilbert space.

\begin{lemma}\label{l  17 for C(K,H)} Let $\mathcal{C}$ be a closed subset of $K$, and let $x_0$ be a norm-one element in $H$. Let $p = \chi_{_\mathcal{C}}$, and let $F_{x_0\otimes p}$ be the set defined in \eqref{eq subtriples F,B,N}. For each $\lambda \in [0,1]$ set $$F(x_0\otimes p,\lambda) :=\left\{ a\in S(C(K,H)) : a p = \lambda x_0\otimes p\right\}.$$ Then $ F(x_0\otimes p,\lambda) = \left(F_{x_0\otimes p}\right)_{\lambda}.$
\end{lemma}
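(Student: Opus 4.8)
The plan is to prove the two inclusions separately. The reverse inclusion $(F_{x_0\otimes p})_\lambda\subseteq F(x_0\otimes p,\lambda)$ will follow from elementary Hilbert space geometry applied pointwise on $\mathcal{C}$, while the direct inclusion $F(x_0\otimes p,\lambda)\subseteq (F_{x_0\otimes p})_\lambda$ is a gluing argument and is the one requiring care. First note that if $\mathcal{C}=\emptyset$ then $p=0$ and both sides reduce to $S(C(K,H))$, so one may assume $\mathcal{C}\neq\emptyset$; also $x_0\otimes 1\in F_{x_0\otimes p}$, so $F_{x_0\otimes p}\neq\emptyset$ and the distances appearing below are finite. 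Throughout, recall that $a\in F_{x_0\otimes p}$ (resp. $a\in F(x_0\otimes p,\lambda)$) means exactly that $a\in S(C(K,H))$ and $a|_{\mathcal{C}}\equiv x_0$ (resp. $a|_{\mathcal{C}}\equiv\lambda x_0$).

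For $(F_{x_0\otimes p})_\lambda\subseteq F(x_0\otimes p,\lambda)$, I would take $a\in(F_{x_0\otimes p})_\lambda$ and fix $t\in\mathcal{C}$. Since every $b\in F_{x_0\otimes p}$ satisfies $b(t)=x_0$, one has $\|a(t)-x_0\|=\|(a-b)(t)\|\le\|a-b\|_\infty$ for all such $b$, and passing to the infimum gives $\|a(t)-x_0\|\le\mathrm{dist}(a,F_{x_0\otimes p})\le 1-\lambda$. Likewise, since $(-b)(t)=-x_0$, one obtains $\|a(t)+x_0\|\le\mathrm{dist}(a,-F_{x_0\otimes p})\le 1+\lambda$. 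The remaining point is the elementary fact that, for a unit vector $x_0$ in a Hilbert space, the only $v$ with $\|v-x_0\|\le 1-\lambda$ and $\|v+x_0\|\le 1+\lambda$ is $v=\lambda x_0$: expanding $\|v\pm x_0\|^2$ yields $\|v\|\le\lambda$, and this together with the Cauchy--Schwarz inequality forces $\|v\|=\lambda$ and $\Re\langle v | x_0\rangle=\lambda$, so that equality in Cauchy--Schwarz gives $v=\lambda x_0$. Applying this with $v=a(t)$ yields $a(t)=\lambda x_0$, and since $t\in\mathcal{C}$ was arbitrary, $a\in F(x_0\otimes p,\lambda)$.

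For the direct inclusion, I would fix $a\in F(x_0\otimes p,\lambda)$ and $\varepsilon>0$. As $a$ is continuous and equals $\lambda x_0$ on the compact set $\mathcal{C}$, the set $V=\{t\in K:\|a(t)-\lambda x_0\|<\varepsilon\}$ is an open neighbourhood of $\mathcal{C}$, and by Urysohn's lemma one can choose a continuous $g:K\to[0,1]$ with $g|_{\mathcal{C}}\equiv 1$ and $g|_{K\setminus V}\equiv 0$. Set $b:=g\,(x_0\otimes 1)+(1-g)\,a$ and $c:=g\,(x_0\otimes 1)-(1-g)\,a$ in $C(K,H)$. Both satisfy $\|b(t)\|,\|c(t)\|\le g(t)+(1-g(t))\|a(t)\|\le 1$ for every $t$, and $b|_{\mathcal{C}}=c|_{\mathcal{C}}\equiv x_0$; hence $\|b\|_\infty=\|c\|_\infty=1$ and $b,c\in F_{x_0\otimes p}$. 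Since $a(t)-b(t)=g(t)(a(t)-x_0)$ and $a(t)+c(t)=g(t)(a(t)+x_0)$, with $g\equiv 0$ off $V$ while on $V$ one has $\|a(t)-x_0\|\le\|a(t)-\lambda x_0\|+(1-\lambda)<\varepsilon+(1-\lambda)$ and $\|a(t)+x_0\|\le\|a(t)-\lambda x_0\|+(1+\lambda)<\varepsilon+(1+\lambda)$, it follows that $\mathrm{dist}(a,F_{x_0\otimes p})\le\|a-b\|_\infty<\varepsilon+(1-\lambda)$ and $\mathrm{dist}(a,-F_{x_0\otimes p})\le\|a+c\|_\infty<\varepsilon+(1+\lambda)$. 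Letting $\varepsilon\to 0$ gives $a\in(F_{x_0\otimes p})_\lambda$.

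The main obstacle is this direct inclusion: elements of $F_{x_0\otimes p}$ are constrained to take the value $x_0$ on all of $\mathcal{C}$ and to have sup-norm exactly one, so one must manufacture such elements that are simultaneously within the prescribed distance of $\pm a$. Because $p=\chi_{\mathcal{C}}$ typically does not belong to $C(K)$, the naive truncation ``$x_0\otimes p+(1-p)\,a$'' fails to be continuous; the Urysohn function $g$ together with the $\varepsilon$-limiting argument is what repairs this.
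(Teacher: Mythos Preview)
Your proof is correct. The two inclusions are handled the same way in the paper for $(\supseteq)$---pointwise evaluation on $\mathcal{C}$ followed by the Hilbert space identity forcing $a(t)=\lambda x_0$---although you spell out the parallelogram/Cauchy--Schwarz step that the paper leaves implicit.

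For the direct inclusion $(\subseteq)$, however, you take a genuinely different and more elementary route. The paper argues abstractly through JB$^*$-triple machinery: it identifies $C(K,H)^{**}$ with $C(\tilde K,(H,w))$, observes that $F_{x_0\otimes p}$ is the norm-closed face associated with the compact tripotent $e=x_0\otimes p$, invokes \cite[Theorem~3.6]{BeCuFerPe2018} to get $\overline{F_{x_0\otimes p}}^{w^*}=e+\mathcal{B}_{E^{**}_0(e)}$, notes that $e+a(1-p)$ lies in this weak$^*$-closure at distance exactly $1-\lambda$ from $a$, and finally appeals to Hahn--Banach to pass from distance to the weak$^*$-closure back to distance to $F_{x_0\otimes p}$ itself. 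Your Urysohn construction of $b=g\,(x_0\otimes 1)+(1-g)\,a$ and $c=g\,(x_0\otimes 1)-(1-g)\,a$ replaces all of this with an explicit $\varepsilon$-approximant inside $\pm F_{x_0\otimes p}$, bypassing the bidual, the facial-structure theorem, and Hahn--Banach entirely. What your approach buys is self-containment and transparency; what the paper's approach buys is a demonstration that the facial theory of Section~\ref{sec: facial real} actually does the work, which matters later when the lemma is re-derived for the real case via Theorem~\ref{t characterization relatively open weak* closed faces real case}.
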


\begin{proof}$(\supseteq)$ Let $a\in \left(F_{x_0\otimes p}\right)_{\lambda}.$ We fix $t_0\in \mathcal{C}$. For each $\varepsilon >0$ there exist $b\in F_{x_0\otimes p}$ and $c\in -F_{x_0\otimes p}$ such that $\|a(t_0) - x_0\|_{_H} \leq \|a-b\|< 1-\lambda +\varepsilon$ and $\|a(t_0) + x_0\|_{_H} \leq \|a-c\|< 1+\lambda +\varepsilon.$ The arbitrariness of $\varepsilon>0$ implies that $\|a(t_0) - x_0\|_{_H} \leq  1-\lambda $ and $\|a(t_0) + x_0\|_{_H} \leq  1+\lambda,$ which proves that $a(t_0) = \lambda  x_0$.\smallskip

$(\subseteq)$ Let us take $a\in F(x_0\otimes p,\lambda)$. To simplify the notation, let us write $E= C(K,H)$. Since $H$ is a (complex) Hilbert space, we can identify $E^{**}$ with the Banach space $C(\tilde{K},(H,w))$ of all continuous functions from $\tilde{K}$ to $H$ when this latter space is provided with its weak topology, where $\tilde{K}$ is a compact Hausdorff space such that $C(K)^{**} \equiv C(\tilde{K})$ (see \cite[Theorem 2]{CamGreim82}).\smallskip

The set $F_{x_0\otimes p}$ is a proper norm-closed face of $\mathcal{B}_{E}$, it is actually the face associated with the compact tripotent $e=x_0\otimes p\in E^{**}\equiv C(\tilde{K},(H,w))$. It has been recently shown in \cite[Theorem 3.6]{BeCuFerPe2018} that the weak$^*$-closure of $F_{x_0\otimes p}$ in $E^{**}$ is precisely the proper weak$^*$-closed of $\mathcal{B}_{C(K,H)^{**}}$ associated with the compact tripotent $e$, that is, $\overline{F_{x_0\otimes p}}^{w^*} = F_{e}^{E^{**}}= e + \mathcal{B}_{E^{**}_0(e)}.$ Clearly, the element $ e + a (1-p)$ belongs to $F_{e}^{E^{**}}= e + \mathcal{B}_{E^{**}_0(e)}$ and $\| a - (e + a (1-p))\|  = \| \lambda x_0\otimes p - x_0\otimes p  \| = 1-\lambda.$ We deduce that $\hbox{dist}(a,\overline{F_{x_0\otimes p}}^{w^*}) \leq 1-\lambda$. Now, an application of the Hahn-Banach separation theorem gives $\hbox{dist}(a,{F_{x_0\otimes p}}) = \hbox{dist}(a,\overline{F_{x_0\otimes p}}^{w^*}) \leq 1-\lambda$. If in the above argument we replace $ e + a (1-p)$ by $ -e + a (1-p)$, we derive $\hbox{dist}(a,-{F_{x_0\otimes p}})\leq 1+\lambda$.
\end{proof}

The following proposition is a first step to obtain a linear extension of a surjective isometry between the unit spheres of $C(K,H)$ and any Banach space $Y$. We shall show that such isometries are affine on the maximal proper faces of $\mathcal{B}_{C(K,H)}$ using an adaptation of the arguments in \cite[Proposition 20]{MoriOza2018}.

\begin{proposition}\label{p Delta is affine in maximal faces}
%Let $K$ be a compact Hausdorff space and let $H$ be a complex Hilbert space.
Let $\Delta : S(C(K,H))\to S(Y)$ be a surjective isometry, where $Y$ is a real Banach space. Suppose $t_0\in K$ and $x_0\in S(H)$. Then there exist a net $(\mathcal{R}_\lambda)_{\lambda}$ of convex subsets of $A(t_0,x_0)$ and a net $(\theta_\lambda)_\lambda$ of affine contractions from $A(t_0,x_0)$ into $\mathcal{R}_{\lambda}$ such that $\theta_\lambda\to \hbox{Id}$ in the point-norm topology. Moreover, for each $\lambda,$ $\mathcal{R}_\lambda$ satisfies the strong Mankiewicz property and $\Delta(\mathcal{R}_\lambda)$ is convex. Consequently $\Delta|_{A(t_0,x_0)}$ is affine.
\end{proposition}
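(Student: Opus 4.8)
The plan is to mimic closely the strategy of \cite[Proposition 20]{MoriOza2018}, using the face $A(t_0,x_0) = F_{x_0\otimes 1}$ and its relatives $F(x_0\otimes p,\lambda)$ from Lemma \ref{l  17 for C(K,H)} as the scaffolding, while replacing the role of unital C$^*$-algebra techniques by the Hilbert-$C(K)$-module machinery developed in Section \ref{sec: 3}. First I would index the net by the directed set of pairs $\lambda = (\mathcal{O}, \varepsilon)$ where $\mathcal{O}$ is an open neighbourhood of $t_0$ and $\varepsilon>0$, ordered by reverse inclusion in the first coordinate and reverse order in the second. For each such $\lambda$, set $p_\lambda := \chi_{_{K\setminus\mathcal{O}}}$ (a compact, typically non-continuous, characteristic function), and let $N_\lambda := N_{p_\lambda}^{x_0}$ be the norm-closed real JB$^*$-subtriple of $C(K,H)$ of functions that are a scalar multiple of $x_0$ on $\mathcal{O}$ in the sense of \eqref{eq subtriples F,B,N}; define $\mathcal{R}_\lambda := A(t_0,x_0)\cap N_\lambda$. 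This is a convex subset of $A(t_0,x_0)$, and by Proposition \ref{p N strong Mankiewicz property} (applied with $\varphi(a) = \langle a(t_0)\mid x_0\rangle$ and the subtriple $(N_\lambda)^\varphi_\RR$, whose unit ball is the closed convex hull of its extreme points and which contains $\mathcal{R}_\lambda$ as a face) it satisfies the strong Mankiewicz property.

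Next I would construct the affine contractions $\theta_\lambda : A(t_0,x_0)\to \mathcal{R}_\lambda$. Given $a\in A(t_0,x_0)$, one wants a canonical way of ``flattening'' $a$ on the neighbourhood $\mathcal{O}$ so that it becomes a scalar multiple of $x_0$ there, at a controlled cost. Using Urysohn's lemma, pick $f_\lambda\in C(K)$ with $0\le f_\lambda\le 1$, $f_\lambda(t_0)=1$, and $f_\lambda$ supported in $\mathcal{O}$; on the (larger, shrinking) set where $\|a(t)-x_0\|$ is small the correction is small. The precise prescription I would use is $\theta_\lambda(a) := f_\lambda\cdot(x_0\otimes 1) + (1-f_\lambda)\cdot a$, normalised if necessary; since $a(t_0)=x_0$ and $a\in\mathcal{B}_{C(K,H)}$, this lies in $\mathcal{B}_{C(K,H)}$, agrees with $x_0$ at $t_0$ (so it is in $A(t_0,x_0)$), and agrees with a scalar multiple of $x_0$ on a neighbourhood of $t_0$; modulo a small normalisation this puts it in $\mathcal{R}_\lambda$. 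Linearity in $a$ is immediate from the formula, hence $\theta_\lambda$ is affine, and contractivity is a pointwise convex-combination estimate. Point-norm convergence $\theta_\lambda\to\mathrm{Id}$ follows because $\|a-\theta_\lambda(a)\| = \|f_\lambda\cdot(a - x_0\otimes 1)\| \le \sup_{t\in\mathcal{O}}\|a(t)-x_0\|$, which tends to $0$ as $\mathcal{O}$ shrinks to $t_0$ by continuity of $a$.

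Then I would show $\Delta(\mathcal{R}_\lambda)$ is convex. The key point is that $\mathcal{R}_\lambda$ is itself an intersection of maximal faces of the form $A(t,z)$ (and of the ``scaled'' faces controlled by Lemma \ref{l  17 for C(K,H)}), namely $\mathcal{R}_\lambda = \bigcap\{A(t, x_0) : t\in K\setminus\mathcal{O}\text{ with }\ldots\}$ intersected with the conditions forcing the value to stay in $\RR x_0$ on $\mathcal{O}$; more efficiently, $\mathcal{R}_\lambda = A(t_0,x_0)\cap F(x_0\otimes p_\lambda,\mu)$-type sets, which by Lemma \ref{l  17 for C(K,H)} and Lemma \ref{l existence of support functionals for the image of a face} are carried by $\Delta$ to intersections of sets of the form $\psi^{-1}(\{c\})\cap S(Y)$ with $\psi$ ranging over $\mathrm{supp}_\Delta$ and using \eqref{eq post lemma support}; such an intersection with a sphere, being an intersection of ``slabs'', transports convexity. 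More concretely: $\Delta$ maps $A(t_0,x_0)$ and each $-A(t,z)$ to sets cut out by support functionals (Lemma \ref{l existence of support functionals for the image of a face}, \eqref{eq post lemma support}), and the strong Mankiewicz property of $\mathcal{R}_\lambda$ forces $\Delta|_{\mathcal{R}_\lambda}$ to be affine once we know it maps $\mathcal{R}_\lambda$ onto a convex set; so I would instead argue the two together as in \cite[Proposition 20]{MoriOza2018}: using the structure of $\mathcal{R}_\lambda$ as a convex body inside the subtriple $N_\lambda$, together with Lemma \ref{l PeSta generalized} to identify the relevant support functionals of $Y^*$ with point evaluations, one checks directly that $\Delta(\mathcal{R}_\lambda)$ is the intersection of $S(Y)$ with an affine slice, hence convex. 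Finally, $\Delta|_{\mathcal{R}_\lambda}: \mathcal{R}_\lambda\to\Delta(\mathcal{R}_\lambda)$ is a surjective isometry between a convex body with the strong Mankiewicz property and a convex subset of a normed space, hence affine; and since $\theta_\lambda\to\mathrm{Id}$ pointwise with each $\theta_\lambda$ affine and $\Delta$ continuous, $\Delta\circ\theta_\lambda\to\Delta$ pointwise on $A(t_0,x_0)$, so $\Delta|_{A(t_0,x_0)}$ is a pointwise limit of affine maps, hence affine.

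The main obstacle I anticipate is proving that $\Delta(\mathcal{R}_\lambda)$ is convex: this is where the C$^*$-algebraic argument of \cite[Proposition 20]{MoriOza2018} (which leans on the face $F^A(p,\lambda)$ being cut out by a single projection and on Lemma 17 there) must be replaced, and the replacement requires simultaneously (i) the identification of $\overline{F_{x_0\otimes p}}^{w^*}$ as the weak$^*$-closed face $F_e^{E^{**}}$ via Theorem \ref{t characterization relatively open weak* closed faces} / \cite[Theorem 3.6]{BeCuFerPe2018}, (ii) the computation of distances in Lemma \ref{l  17 for C(K,H)} to recognise $\mathcal{R}_\lambda$ as an $F(x_0\otimes p,\cdot)$-type set, and (iii) the functional-analytic identification in Lemma \ref{l PeSta generalized} that the support functionals concentrated on $A(t_0,x_0)$-type faces are (multiples of) evaluations $x_0^*\otimes\delta_{t_0}$. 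Getting all three to interlock so that the image is visibly an affine slice of $S(Y)$ is the delicate part; everything else is routine Urysohn-type estimation and an appeal to Theorem \ref{t Mori-Ozawa strong Mankiewicz}.
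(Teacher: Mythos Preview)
Your overall framework is right and matches the paper's: build affine contractions $\theta_\lambda$ via Urysohn functions, land in convex sets $\mathcal{R}_\lambda$ with the strong Mankiewicz property, show $\Delta(\mathcal{R}_\lambda)$ is convex, and pass to the limit. There is, however, a genuine gap in your choice of $\mathcal{R}_\lambda$. With $t_0\in\mathcal{O}$, your set $\mathcal{R}_\lambda = A(t_0,x_0)\cap N_\lambda$ collapses to the face $F_{x_0\otimes p} = \{a\in S(C(K,H)) : a|_{\mathcal{O}} = x_0\}$ (the condition $a(t_0)=x_0$ forces $\mu=1$). This \emph{is} an intersection face, so $\Delta(\mathcal{R}_\lambda)$ is convex by \cite[Lemma 8]{MoriOza2018}, but it is a proper face, not a convex body, and Theorem \ref{t Mori-Ozawa strong Mankiewicz} (and Proposition \ref{p N strong Mankiewicz property}) only yield the strong Mankiewicz property for convex bodies. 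Your appeal to Proposition \ref{p N strong Mankiewicz property} via ``$\mathcal{R}_\lambda$ is a face of $\mathcal{B}_{(N_\lambda)^\varphi_\RR}$'' does not close this: faces of a ball need not inherit the strong Mankiewicz property. (A secondary issue: with a single Urysohn function $f_\lambda$, your $\theta_\lambda(a)$ is not constant on $\mathcal{O}$ where $0<f_\lambda<1$; you need two functions $e_\lambda\le f_\lambda$ with $e_\lambda f_\lambda=e_\lambda$, as in the paper.)

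The paper resolves this tension by \emph{enlarging} $\mathcal{R}_\lambda$ to a genuine convex body, namely $\mathcal{R} = (x_0\otimes(1-f)) + f\,\mathcal{B}_{N^\varphi_\RR}$, a translate of (a linear image of) the ball whose strong Mankiewicz property is exactly Proposition \ref{p N strong Mankiewicz property}. The price is that this $\mathcal{R}$ is no longer an intersection face, so proving $\Delta(\mathcal{R})$ convex requires real work: one decomposes $\mathcal{R}=\bigcup_{\gamma\in[-1,1]}\mathcal{R}(\gamma)$ according to the value of $\varphi$, expresses each slice $\mathcal{R}(\gamma)$ as an intersection $\bigcap_m(H^1_m(\gamma)\cap H^2_m(\gamma))$ of neighbourhoods of sets built from the scaled faces $F(x_0\otimes q,\cdot)$ of Lemma \ref{l  17 for C(K,H)}, and then invokes \cite[Lemmas 10 and 11]{MoriOza2018}. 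Your sketch (``$\Delta(\mathcal{R}_\lambda)$ is the intersection of $S(Y)$ with an affine slice'' via support functionals and Lemma \ref{l PeSta generalized}) is not how this goes; Lemma \ref{l PeSta generalized} plays no role here---it enters only later in Theorem \ref{t MazurUlam property for C(K,h)}. You correctly flagged convexity of $\Delta(\mathcal{R}_\lambda)$ as the hard step, but the mechanism you propose does not supply it, and with your smaller $\mathcal{R}_\lambda$ the difficulty simply migrates to the strong Mankiewicz side.
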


\begin{proof}
	Fix $x_0\in S(H)$ and $t_0\in K$. Let us write $\varphi_0=x_0^*\otimes \delta_{t_0}\in S(C(K,H)^*)$, and consider the norm-closed inner ideal of $C(K,H)$ $$L=\{ b\in C(K,H) : \|b\|_{\varphi_0}=0 \}=\{ b\in C(K,H) : b(t_0)=0 \},$$ where $\|b\|^2_{\varphi_0}= \varphi_0 \J{b}{b}{x_0\otimes 1} $ for each $b\in C(K,H)$. We can always find, via Urysohn's lemma, two nets $(f_\lambda)_\lambda$, $(e_\lambda)_\lambda$ in $C(K)$ satisfying the following properties: $0\leq e_\lambda\leq f_\lambda\leq 1,$ $e_\lambda f_\lambda=e_\lambda$ for every $\lambda\in \Lambda$, $e_\mu \geq e_\lambda$  and $f_\mu \geq f_\lambda$ for every $\mu \geq \lambda$, and $$\|f_\lambda b - b\| \underset{\lambda}{\to}0, \  \|bf_\lambda  - b\| \underset{\lambda}{\to}0, \ \|e_\lambda b - b\| \underset{\lambda}{\to}0, \hbox{ and }  \|be_\lambda  - b\| \underset{\lambda}{\to}0, \quad \forall b\in L.$$ We shall say that $(f_\lambda)_\lambda$ and $(e_\lambda)_\lambda$ are \emph{module-approximate units} for $L$. We can actually assume that each $f_\lambda $ (and hence each $e_\lambda $) vanishes on an open neighborhood of $t_0$.\smallskip
	
We define now $\theta_\lambda: C(K,H)\to C(K,H)$, $\theta_\lambda(c):= x_0\otimes (1-e_\lambda) + c e_\lambda$. Since for each $a\in A(t_0,x_0)$ the element $a-x_0\otimes 1$ belongs to $L$, we deduce that $$\left\|\theta_\lambda(a)  - a\right\| =\left\| (a-x_0\otimes 1) e_{\lambda} - (a -x_0\otimes 1) \right\| \underset{\lambda}{\to}0.$$ Clearly  $\theta_\lambda$ is an affine mapping for every $\lambda$, and $c\in \mathcal{B}_{C(K,H)}$, $\theta_\lambda(c)$ lies in $A(t_0,x_0)$. Finally it is worth noting that $\theta_\lambda$ is contractive.\smallskip
	
From now on we fix a subindex $\lambda$, and thus we shall write $e$, $f$ and $\theta$ for $e_\lambda, f_\lambda$ and $\theta_{\lambda}$, respectively. Let us consider the open subset $\mathcal{O}\subseteq K$ given by $A=(1-f)^{-1}(\RR \setminus \{0\})$. By construction $t_0\in \mathcal{O}$ and $p:=\chi_{_\mathcal{O}}\in C(K)^{**}$ is the range projection of $(1-f)$ in $C(K)^{**}$.\smallskip

We consider next the norm-closed subtriples $F\subseteq  N \subseteq B \subseteq C(K,H)$ defined in (\ref{eq subtriples F,B,N}), that is, \begin{align*} F&=F_{x_0\otimes p}  :=  \{ a\in S(C(K,H)) : ap= x_0 \otimes p \},   \\
B &=B_p :=\{ a\in C(K,H) : ap=h \otimes p, \text{ for some }  h\in H \}, \\
N &=N_p^{x_0} := \{ a\in C(K,H) : ap= \mu x_0 \otimes p, \text{ for some }  \mu\in \CC \}.
\end{align*}
	
Given $a\in A(t_0,x_0)$, we have $\theta(a)(1-f)=x_0\otimes (1-f)$, which proves that $\theta(a)\in F$. We therefore conclude that $\theta|_{A(t_0,x_0)}: A(t_0,x_0)\to F\subseteq A(t_0,x_0)$.\smallskip
	
As we previously commented in section \ref{sec: 3}, we cannot, in general, assume that $p\in C(K)$, so, we shall distinguish the different cases.\smallskip

\noindent\emph{Case 1: We assume that $p\in C(K)$.} In this case we consider the following norm-closed face of $\mathcal{B}_{C(K,H)}$ $$\mathcal{R}=\mathcal{R}_{\lambda}= (x_0\otimes p)\, + \mathcal{B}_{p^\perp C(K,H)}=F\subseteq A(t_0,x_0),$$ where $\mathcal{B}_{p^\perp C(K,H)}\equiv \mathcal{B}_{C((1-p)K,H)}.$ Proposition \ref{p conditions on C(K,H) to satisfy the Strong Mankiewicz property} implies that $\mathcal{R}$ satisfies the strong Mankiewicz property because the translation $\tau_{-x_0\otimes p}$ is surjective affine isometry.\smallskip

It is not hard to see that $$\theta (a) = x_0\otimes (1-e) + a e = x_0\otimes p + (x_0 \otimes (1-e-p)) + a e \in \mathcal{R},$$ and thus $\theta (A(t_0,x_0))\subseteq \mathcal{R}.$\smallskip

Having in mind that $\mathcal{R}$ is an intersection face in the sense employed in \cite[Lemma 8]{MoriOza2018}, the just quoted result implies that $\Delta(F)$ also is an intersection face, and in particular a non-empty convex set. Since $\mathcal{R}$ satisfies the strong Mankiewicz property we deduce that $\Delta|_{\mathcal{R}}$ is affine.\smallskip

\emph{Case 2: We assume that $p\notin C(K)$.} We claim that, in this case, $\overline{\mathcal{O}}\cap (K\setminus \mathcal{O})\neq \emptyset$. Otherwise, $\overline{\mathcal{O}}\cap (K\setminus \mathcal{O})=\emptyset$, and hence $K=\mathcal{O}\mathring{\cup} (K\setminus \mathcal{O})\subseteq \overline{\mathcal{O}}\mathring{\cup} (K\setminus \mathcal{O}) \subseteq K$, which proves that $\mathcal{O}$ is clopen. Therefore $p = \chi_{_\mathcal{O}}$ is continuous, leading to a contradiction.\smallskip
		
Following the construction in Section \ref{sec: 3}, we shall consider the linear mapping $T:B\to H$ given by $T(a)=a(t_0)$ for each $a\in B$. We have seen in Section \ref{sec: 3} that $T$ is a triple homomorphism. Let us now take $a\in B$ and write $a(t_0)=x_a$. By applying that $\overline{\mathcal{O}}\cap (K\setminus \mathcal{O})\neq \emptyset$ we deduce that $\|a\|=\|a|_{(K\setminus \mathcal{O})}\|\geq \|a|_{\mathcal{O}}\|=\|x_a\|$. It follows that $\|T(a)\|=\|a(t_0)\|=\|x_a\|\leq \|a\|$. The arbitrariness of $a\in B$ proves that $T$ is  continuous and contractive.\smallskip
		
Since $N$ is a JB$^*$-triple of $B$ the restriction $T|_N:N\to H$ also is a triple homomorphism, and thus the linear functional $\varphi\equiv x_0^*\circ T|_N:N\to \CC$ is a continuous triple homomorphism. Proposition \ref{p N strong Mankiewicz property} now assures that the closed unit ball of $N^\varphi_\RR:=\varphi^{-1}(\RR)$ satisfies the strong Mankiewicz property.\smallskip
		
In this case we set $$\mathcal{R}_{\lambda}=\mathcal{R}:= (x_0\otimes (1-f)) + f\mathcal{B}_{N_\RR^\varphi}\subseteq A(t_0,x_0).$$ Clearly $\mathcal{R}$ satisfies the strong Mankiewicz property.\smallskip

Let us take $a\in F$. Since $(1-f) a =x_0\otimes (1-f) $, we deduce that $a=(1-f) a +f a= x_0\otimes (1-f) + f a$, with $a p = x_0\otimes p$, $\|a\|= 1$ and $\varphi (a) = 1$. We have therefore shown that $F\subseteq \mathcal{R}$.

Let us show that $\theta (A(t_0,x_0))\subseteq \mathcal{R}$. Namely, for each $a\in A(t_0,x_0)$ we write $$\theta (a) = (x_0\otimes (1-e)) +a e = (x_0\otimes (1-f)) + x_0\otimes (f-e) + a e $$
$$=  (x_0\otimes (1-e))  + f\left(x_0\otimes (1-e) + a e \right),$$ where $p \left(x_0\otimes (1-e) + a e \right) = x_0 \otimes p$ and $\varphi \left(x_0\otimes (1-e) + a e \right) = 1$, which shows that $\theta (a)\in \mathcal{R}$.\smallskip

We shall next show that $\Delta(\mathcal{R})$ is convex. The rest of the proof is just an adaptation of the proof of \cite[Lemma 20]{MoriOza2018}, the argument is included here for completeness.\smallskip

Let us follow the notation in \cite{MoriOza2018}. Given $\gamma\in [-1,1]$ we define $h_{\gamma} : [0,1]\to [-1,1]$, $h_{\gamma} (t) := t + (1-t) \gamma$. For $i\in \{1,2\}$ and $m\in \mathbb{N}$ we set $$G_m^i :=A(t_0,x_0) \bigcap \left(\!\!\bigcap_{\stackrel{k=1}{ \chi_{_{[\frac{2k-2+i}{2m},\frac{2 k -1 +i}{2m}]}}(1-f)\neq0}}^{2m}\!\!\! \!\!\!\! F\left(x_0\otimes \chi_{_{[\frac{2k-2+i}{2m},\frac{2 k -1 +i}{2m}]}}(1-f), h_{\gamma}(\frac{k}{m})\right) \right),$$
and $H_m^i (\gamma) := A(t_0,x_0) \cap \mathcal{N}_{\frac{1}{m}} (G_m^i(\gamma))$, where $\mathcal{N}_{\delta} (G_m^i(\gamma))$ is the $\delta$-neighborhood around $G_m^i(\gamma)$. \smallskip

Given $\gamma_1,\gamma_2\in [-1,1]$ and $\alpha \in [0,1]$, Lemma \ref{l  17 for C(K,H)} and \cite[Lemma 10]{MoriOza2018} assure that for $\gamma_3 = \alpha \gamma_1 + (1-\alpha) \gamma_2$ we have $$\alpha G_m^i (\gamma_1) +(1-\alpha) G_m^i (\gamma_2) \subseteq G_m^i (\gamma_3), $$ and
$$\alpha H_m^i (\gamma_1) +(1-\alpha) H_m^i (\gamma_2) \subseteq H_m^i (\gamma_3).$$

Following the ideas in the proof of \cite[Proposition 20]{MoriOza2018} we shall next show that
\begin{equation}\label{eq Kgamma} \mathcal{R}(\gamma):= \left\{a\in A(t_0,x_0) : p a  = x_0\otimes h_{\gamma} (1-f) \right\} = \bigcap_{m\in \mathbb{N}} \left( H_m^1 (\gamma) \cap H_m^2(\gamma)\right).
\end{equation}

$(\subseteq)$ Consider the function $g_m:[0,1]\to \mathbb{R}$ given by
$$g_m (t):=\left\{%
	\begin{array}{ll}
	\gamma , & \hbox{if $t=0$} \\
    h_{\gamma}(\frac{k}{m}), & \hbox{if $t\in [\frac{2k-1}{2 m},\frac{2 k}{2m}]$ with $k\in\{1,\ldots,m\}$} \\
	\hbox{affine}, & \hbox{in the rest}. \\
	\end{array}%
	\right.$$ By definition $\|g_m-h_{\gamma}\|_{_{C(K)}}\leq \frac{1}{m}$ and $(g_m-h_{\gamma}) (0) =0$, which assures that $(g_m-h_{\gamma}) (1-f) = p (g_m- h_{\gamma}) (1-f)\in p C(K)$. For each $a\in \mathcal{R}(\gamma)$ we have $$ p \Big(a+ x_0\otimes (g_m-h_{\gamma}) (1-f) \Big)= x_0\otimes h_{\gamma} (1-f) + x_0\otimes (g_m-h_{\gamma}) (1-f) $$
$$ = \sum_{k=1}^m x_0\otimes h_{\gamma} (\frac{k}{m}) \chi_{_{[\frac{2k-1}{2 m},\frac{2 k}{2m}]}} (1-f),$$ therefore $b_m:=a+ x_0\otimes (g_m-h_{\gamma}) (1-f)\in G^1_m(\gamma)$ and $\|a - b_m\| = \| x_0\otimes (g_m-h_{\gamma}) (1-f)\| \leq \| g_m-h_{\gamma} \|\leq \frac{1}{m},$ witnessing that $a\in H_m^1(\gamma)$. We can similarly show that $a\in H_m^1(\gamma)$ for every natural $m$.\smallskip

$(\supseteq)$ Take now $a\in \bigcap_{m\in \mathbb{N}} \left( H_m^1 (\gamma) \cap H_m^2(\gamma)\right)$. For each natural $m$, we can find $b_m^i\in G_m^i (\gamma)$ satisfying $\|a-b_m^i\|\leq \frac{1}{m}$. Let us consider the projection $\displaystyle p_m^{i} = \sum_{k=1}^m \chi_{_{[\frac{2k-2+i}{2 m},\frac{2 k-1+i}{2m}]}} (1-f)\in C(K)^{**},$ where $i\in\{1,2\}$. Since $b_m^i\in G_m^i (\gamma)$, we have $\|x_0\otimes h_{\gamma} (1-f) p_m^i -b_m^i p_m^i\|\leq \frac{1}{m},$ and hence $$\|a p_m^i - x_0\otimes h_{\gamma} (1-f) p_m^i \|\leq \frac{2}{m},\hbox{ and } \|a (p_m^1+p_m^2) - x_0\otimes h_{\gamma} (1-f) (p_m^1+p_m^2) \|\leq \frac{2}{m},$$ for every natural $m$. Having in mind that $p_m^1+p_m^2 = \chi_{_{[\frac{1}{2m},1]}}(1-f)$, we deduce that $a p = x_0\otimes h_{\gamma} (1-f)$, which finishes the proof of \eqref{eq Kgamma}.\smallskip

Finally, since clearly $\mathcal{R} =\bigcup_{\gamma\in[-1,1]} \mathcal{R}(\gamma)$ and by \cite[Lemma 11]{MoriOza2018}
$$\Delta^{-1} (\alpha \Delta(\mathcal{R}(\gamma_1)) +(1-\alpha) \Delta(\mathcal{R}(\gamma_2))) \subseteq \bigcap_{m\in \mathbb{N}} \left( H_m^1 (\gamma_3) \cap H_m^2(\gamma_3)\right),$$ for all $\alpha\in [0,1],$ $\gamma_1,\gamma_2\in [-1,1]$ and $\gamma_3 = \alpha \gamma_1 +(1-\alpha) \gamma_2$, we prove that $\Delta(\mathcal{R})$ is convex. \smallskip

Summarizing, we have proved that each $\mathcal{R}_{\lambda}$ satisfies the strong Mankiewicz property, $\Delta(\mathcal{R}_{\lambda})$ is convex, $\theta_{\lambda} (A(t_0,x_0))\subseteq \mathcal{R}_{\lambda}$ and $\|\theta_{\lambda} (a)-a\| \to 0$ for each $a\in A(t_0,x_0)$. Therefore $\Delta|_{\mathcal{R}_{\lambda}}$ is an affine mapping, and consequently, $\Delta|_{A(t_0,x_0)}$ is affine too.
\end{proof}

We shall need a more elaborated discussion on the conclusions of Proposition \ref{p Delta is affine in maximal faces}.

\begin{proposition}\label{p 45 more detailed} Let $\Delta : S(C(K,H))\to S(Y)$ be a surjective isometry, where $Y$ is a real Banach space. Suppose $t_0\in K$ and $x_0\in S(H)$. Then for each $\psi \in Y^*$ there exist $\phi_0$ in $C(K,H)_{\mathbb{R}}^{*}$ and $\gamma_0\in \mathbb{R}$ satisfying $\|\phi_0\|\leq \|\psi\|$ and  $$\psi \Delta (a) = \phi_0 (a) + \gamma_0,\hbox{ for all } a\in A(t_0,x_0).$$
\end{proposition}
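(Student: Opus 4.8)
The plan is to start from Proposition~\ref{p Delta is affine in maximal faces}, which tells us that $g:=\psi\circ\Delta|_{A(t_0,x_0)}$ is a real-valued \emph{affine} function on $A(t_0,x_0)$, with $|g(a)|\le\|\psi\|$ for every $a\in A(t_0,x_0)$ since $\Delta(a)\in S(Y)$. Write $e_0:=x_0\otimes 1\in A(t_0,x_0)$ and $L:=\ker\delta_{t_0}=\{b\in C(K,H):b(t_0)=0\}$. Then $C_0:=A(t_0,x_0)-e_0=\{b\in L:\|x_0+b(t)\|\le 1\ \forall t\in K\}$ is convex and contains $0$, and $b\mapsto\widetilde g(b):=g(e_0+b)$ is affine on $C_0$, hence of the form $\widetilde g(b)=\widetilde g(0)+\ell(b)$ with $\ell(\alpha b_1+(1-\alpha)b_2)=\alpha\ell(b_1)+(1-\alpha)\ell(b_2)$ for $b_1,b_2\in C_0$ and $\alpha\in[0,1]$. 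By the usual procedure (extend $\ell$ by positive homogeneity to the convex cone generated by $C_0$, then additively to its linear span) $\ell$ becomes a well-defined real-linear functional on $\operatorname{span}(C_0)$, and $|\ell(b)|=|g(e_0+b)-g(e_0)|\le 2\|\psi\|$ for all $b\in C_0$.

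The heart of the argument is a geometric fact about $C(K,H)$: every $v\in L$ with $\|v\|<1$ can be written as $\tfrac12(a_1-a_2)$ with $a_1,a_2\in A(t_0,x_0)$. To prove it, put $w:=2v$, so $\|w(t)\|<2$ for every $t$ and $w(t_0)=0$; for each $t$ the ``lens'' $L(t):=\mathcal{B}_{H}\cap(\mathcal{B}_{H}-w(t))$ is a nonempty closed convex subset of $H$, and these sets stay uniformly fat and vary continuously in $t$ because $w$ is continuous and $\sup_t\|w(t)\|<2$. I would then take $a_2(t)$ to be the metric projection of $x_0$ onto $L(t)$ (equivalently, obtain a continuous selection $a_2(t)\in L(t)$ by patching local choices with a partition of unity); a routine argument shows $a_2\in C(K,H)$, and $a_2(t_0)=P_{\mathcal{B}_{H}}(x_0)=x_0$, so $a_2\in A(t_0,x_0)$. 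Since $a_2(t)+w(t)\in\mathcal{B}_{H}$ and $a_2(t_0)+w(t_0)=x_0$, the element $a_1:=a_2+w$ also lies in $A(t_0,x_0)$, and $a_1-a_2=2v$.

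Granting this, for $v\in L$ with $\|v\|<1$ we get $2v=b_1-b_2$ with $b_i:=a_i-e_0\in C_0$, so $v\in\operatorname{span}(C_0)$ and $2\ell(v)=\ell(b_1)-\ell(b_2)=\widetilde g(b_1)-\widetilde g(b_2)=g(a_1)-g(a_2)$, whence $|\ell(v)|\le\tfrac12(|g(a_1)|+|g(a_2)|)\le\|\psi\|$. Applying this to $\lambda v$ for small $\lambda>0$ gives $\operatorname{span}(C_0)=L$, and applying the estimate to $(1-\tfrac1n)v$ for $\|v\|=1$ together with the linearity of $\ell$ gives $|\ell(v)|\le\|\psi\|$ there as well; hence $\|\ell\|_{L^{*}}\le\|\psi\|$. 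Now extend $\ell$ by the Hahn--Banach theorem to a real-linear functional $\phi_0\in C(K,H)_{\RR}^{*}$ with $\|\phi_0\|=\|\ell\|_{L^{*}}\le\|\psi\|$, and set $\gamma_0:=g(e_0)-\phi_0(e_0)\in\RR$. Then for every $a\in A(t_0,x_0)$ we have $a-e_0\in C_0\subseteq L$, so $\psi\Delta(a)=g(a)=g(e_0)+\ell(a-e_0)=g(e_0)+\phi_0(a)-\phi_0(e_0)=\phi_0(a)+\gamma_0$, as required.

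The main obstacle is the geometric step of the second paragraph: realising a prescribed $v\in\mathcal{B}_{L}$ as the difference of two genuine elements of the maximal face $A(t_0,x_0)$. Everything hinges on the continuous-selection statement for the moving family of lenses $L(t)$ with the boundary value $a_2(t_0)=x_0$ prescribed; this is precisely what upgrades the cheap bound $\|\ell\|\le 2\|\psi\|$ on $C_0$ to the sharp $\|\phi_0\|\le\|\psi\|$ demanded by the statement.
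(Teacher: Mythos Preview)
Your argument is correct and takes a genuinely different route from the paper's.

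The paper does \emph{not} start from the mere conclusion of Proposition~\ref{p Delta is affine in maximal faces} (that $\Delta|_{A(t_0,x_0)}$ is affine); it goes back inside that proof and uses the approximating convex sets $\mathcal{R}_\lambda$ together with the strong Mankiewicz property. On each $\mathcal{R}_\lambda$ (either $(x_0\otimes p)+\mathcal{B}_{(1-p)C(K,H)}$ with $p\in C(K)$, or $(x_0\otimes(1-f_\lambda))+f_\lambda\mathcal{B}_{N^{\varphi}_\RR}$ with $p\notin C(K)$) the restriction of $\Delta$ is affine onto a convex set, so by Theorem~\ref{t Mori-Ozawa strong Mankiewicz} it extends, after translating, to an actual \emph{linear} isometry $\tilde T_\lambda$ into $Y$; setting $\phi_\lambda:=\psi\circ\tilde T_\lambda$ (extended by Hahn--Banach in the second case) gives $\psi\Delta(b)=\phi_\lambda(b)+\gamma_\lambda$ on $\mathcal{R}_\lambda$ with $\|\phi_\lambda\|\le\|\psi\|$ for free. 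A weak$^*$-compactness argument on $\mathcal{B}_{C(K,H)_\RR^*}$ and $\theta_\lambda\to\hbox{Id}$ then produce the limit $(\phi_0,\gamma_0)$.

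Your approach is more self-contained: you only use the \emph{output} of Proposition~\ref{p Delta is affine in maximal faces} and recover the sharp bound $\|\phi_0\|\le\|\psi\|$ by the geometric fact that every $v\in\ker\delta_{t_0}$ with $\|v\|<1$ is half the difference of two elements of $A(t_0,x_0)$. This is neat and works verbatim in the real case as well. The step you single out as the obstacle can indeed be made rigorous exactly as you suggest: for $\|w\|\le 2-\delta$ the lens $L_w=\mathcal{B}_H\cap(\mathcal{B}_H-w)$ contains the ball $B(-w/2,\,\delta/2)$, and a short computation shows $w\mapsto L_w$ is Hausdorff--Lipschitz on $\{\|w\|\le 2-\delta\}$ (push any $z\in L_{w_1}$ towards $-w_2/2$), whence the Hilbert-space metric projection $t\mapsto P_{L(t)}(x_0)$ is continuous, with value $x_0$ at $t_0$ since $L(t_0)=\mathcal{B}_H$. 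Everything else in your write-up (the extension of $\ell$ from $C_0$ to a linear functional on $L$, the identification $\mathrm{span}(C_0)=L$, and the Hahn--Banach step) is clean.

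In short: the paper trades your continuous-selection lemma for the heavier machinery of $\mathcal{R}_\lambda$, strong Mankiewicz, and weak$^*$-limits; your route is more elementary and independent of that internal construction, at the cost of the lens-projection continuity argument you already identified.
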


\begin{proof}
Let us fix $t_0\in K$ and $x_0\in S(H)$. Let us fix $\psi\in Y^*$. We can assume, without loos of generality, that $\|\psi\|=1$. By Proposition \ref{p Delta is affine in maximal faces} and its proof there exist a net $(\mathcal{R}_\lambda)_{\lambda}$ of convex subsets of $A(t_0,x_0)$ and a net $(\theta_\lambda)_\lambda$ of affine contractions from $A(t_0,x_0)$ into $\mathcal{R}_{\lambda}$ such that $\theta_\lambda\to \hbox{Id}$ in the point-norm topology. Moreover, for each $\lambda,$ $\mathcal{R}_\lambda$ satisfies the strong Mankiewicz property and $\Delta(\mathcal{R}_\lambda)$ is convex. We further know that one of the following statements hold for each $\mathcal{R}_{\lambda}$:\smallskip

\emph{Case 1:} $\mathcal{R}_{\lambda} = (x_0\otimes p) + \mathcal{B}_{(1-p) C(K,H)}$, where $p$ is a projection in $C(K)$. We consider in this case the surjective isometry $\Delta_{\lambda} : \mathcal{B}_{(1-p) C(K, H)}\to \Delta(\mathcal{R}_{\lambda})$ defined by the following diagram:
$$\xymatrix{
	\mathcal{R}_{\lambda} \ar[r]^{\Delta|_{\mathcal{R}_{\lambda}}}   \ar[d]_{\tau_{-x_0\otimes p}}& \Delta(\mathcal{R}_{\lambda}) \\ \mathcal{B}_{(1-p)C(K, H)} \ar@{-->}[ur]_{\Delta_{\lambda}}
}$$ By Proposition \ref{p conditions on C(K,H) to satisfy the Strong Mankiewicz property} and Theorem \ref{t Mori-Ozawa strong Mankiewicz} there exist $c_\lambda\in \Delta(\mathcal{R}_{\lambda})$ and a linear isometry $T_{\lambda} : (1-p) C(K,H)\to Y$ such that $\Delta ( b) = c_\lambda + T_\lambda (b - x_0\otimes p )$ for all $b\in \mathcal{R}_{\lambda}$. By regarding $T_{\lambda}$ as a linear contraction, $\tilde{T}_{\lambda},$ from $C(K,H)$ to $Y$ defined by $\tilde{T}_{\lambda} (a) := T_{\lambda} (a (1-p))$, we deduce that $$\psi \Delta (b) = \gamma_{\lambda} + \phi_{\lambda} (b), \hbox{ for all $b\in \mathcal{R}_{\lambda}$},$$ where $\gamma_{\lambda} = \psi (c_{\lambda})$ is a real number in $[-1,1]$ and $\phi_{\lambda}= \psi\circ \tilde{T}_{\lambda}\in \mathcal{B}_{C(K,H)_{\mathbb{R}}^{*}}$.\smallskip

\emph{Case 2:} $\mathcal{R}_{\lambda} = (x_0\otimes (1-f_{\lambda})) + f_{\lambda} \mathcal{B}_{N^{\varphi}_{\mathbb{R}}}$, where $f_{\lambda}\in S(C(K))$ with $0\leq f_{\lambda}\leq 1$, $p_{\lambda}\in C(K)^{**}\backslash C(K)$ is the range projection of $1-f_{\lambda}$ and $N_{\lambda}$ is the JB$^*$-subtriple of $C(K,H)$ defined by $N_{\lambda} = \{ a\in C(K,H) : a p_{\lambda}= \mu x_0 \otimes p_{\lambda}, \text{ for some }  \mu\in \CC \}$. Furthermore, suppose $p_{\lambda}$ is the characteristic function of the open set $\mathcal{O}_{\lambda}$, then we have $\overline{\mathcal{O}_{\lambda}}\cap (K\setminus \mathcal{O}_{\lambda})\neq \emptyset$. We consider in this case the surjective isometry $\Delta_{\lambda} : f_{\lambda} \mathcal{B}_{N^{\varphi}_{\mathbb{R}}}\to \Delta(\mathcal{R}_{\lambda})$ defined by the following diagram:
$$\xymatrix{
	\mathcal{R}_{\lambda} \ar[r]^{\Delta|_{\mathcal{R}_{\lambda}}}   \ar[d]_{\tau_{-x_0\otimes (1-f_{\lambda})}}& \Delta(\mathcal{R}_{\lambda}) \\ f_{\lambda} \mathcal{B}_{N^{\varphi}_{\mathbb{R}}} \ar@{-->}[ur]_{\Delta_{\lambda}}
}$$ Proposition \ref{p N strong Mankiewicz property} assures that $f_{\lambda} \mathcal{B}_{N^{\varphi}_{\mathbb{R}}}$ satisfies the strong Mankiewicz property, and hence $\Delta_{\lambda}$ is affine. Since $f_{\lambda} (1-p_{\lambda}) =  1-p_{\lambda}$, we can easily deduce that the mapping $\Phi_{\lambda} : a\mapsto f_{\lambda} a$ is a surjective affine isometry from $ \mathcal{B}_{N^{\varphi}_{\mathbb{R}}}$ onto $f_{\lambda} \mathcal{B}_{N^{\varphi}_{\mathbb{R}}}$. Let $z_\lambda:= \Delta(x_0\otimes (1-f_{\lambda})).$ We complete now the previous diagram $$\xymatrix{
	\mathcal{R}_{\lambda} \ar[r]^{\Delta|_{\mathcal{R}_{\lambda}}}   \ar[d]_{\tau_{-x_0\otimes (1-f_{\lambda})}}& \Delta(\mathcal{R}_{\lambda}) \ar[d]^{\tau_{-z_{\lambda}}} \\ f_{\lambda} \mathcal{B}_{N^{\varphi}_{\mathbb{R}}}\ar@{-->}[ur]_{\Delta_{\lambda}} \ar[r]^{F_{\lambda}} \ar[d]_{\Phi^{-1}} & \Delta(\mathcal{R}_{\lambda}) -z_{\lambda} \\ \mathcal{B}_{N^{\varphi}_{\mathbb{R}}}\ar@{-->}[ur]_{T_{\lambda}}
}
$$ The mappings $T_{\lambda}= \Delta_{\lambda} -z_{\lambda}$ and $T_{\lambda}$ are affine and map zero to zero. Let $\tilde{T}_{\lambda} : N_{\mathbb{R}}^{\varphi}\to Y$ be a bounded linear operator whose restriction to $\mathcal{B}_{N^{\varphi}_{\mathbb{R}}}$ is $T_{\lambda}$. Clearly, $\|\tilde{T}_{\lambda}\|\leq 1$. Let $\phi_{\lambda}\in \mathcal{B}_{C(K,H)_{\mathbb{R}}^{*}}$ be a Hahn-Banach extension of $\psi\circ\tilde{T}_{\lambda}\circ \Phi^{-1}\in \left(N_{\mathbb{R}}^{\varphi}\right)^*$. It follows from the previous diagram that $$\psi \Delta(b) = \phi_{\lambda} (b) +\gamma_{\lambda},$$ for every $b\in\mathcal{R}_{\lambda}$, where $\gamma_{\lambda} =- \phi_{\lambda} (x_0\otimes (1-f_{\lambda})) + \psi (z_{\lambda})$ is a real number in the interval $[-2,2]$.\smallskip

We have therefore shown that for each index $\lambda$ there exist a functional $\phi_{\lambda}$ in $\mathcal{B}_{C(K,H)_{\mathbb{R}}^{*}}$ and a real $\gamma_{\lambda}\in[-2,2]$ satisfying \begin{equation}\label{eq  extension 2502} \psi \Delta(b) = \phi_{\lambda} (b) +\gamma_{\lambda}, \hbox{ for every $b\in\mathcal{R}_{\lambda}$. }
\end{equation} Having in mind that $\mathcal{B}_{C(K,H)_{\mathbb{R}}^{*}}$ is weak$^*$-compact (and the compactness of $\mathcal{B}_{\mathbb{R}}$), we can find $\phi_0\in \mathcal{B}_{C(K,H)_{\mathbb{R}}^{*}},$ $\gamma_0\in \mathbb{R}$, and common subnets  $(\phi_{\mu})_{\mu}$ and $(\gamma_{\mu})_{\mu}$ converging to $\phi_0$ and to $\gamma_0$ in the weak$^*$ and norm topologies of $C(K,H)_{\mathbb{R}}^{*}$ and $\mathbb{R}$, respectively. Since, for each $a\in A(t_0,x_0)$ the net $(\theta_\mu (a))_\mu\subseteq \mathcal{R}_{\lambda}$ converges in norm to $a$, we can easily deduce from \eqref{eq  extension 2502} that $\psi \Delta(a) = \phi_{0} (a) +\gamma_{0},$ for every $a\in A(t_0,x_0)$.
\end{proof}

We can now state the main result of this section.

\begin{theorem}\label{t MazurUlam property for C(K,h)}
Let $K$ be a compact Hausdorff space and let $H$ be a complex Hilbert space. Then the Banach space $C(K,H)$ satisfies the Mazur--Ulam property {\rm(}as a real Banach space{\rm)}, that is, for each surjective isometry $\Delta : S(C(K,H))\to S(Y)$, where $Y$ is a real Banach space, there exists a surjective real linear isometry from $C(K,H)$ onto $Y$ whose restriction to $S(C(K,H))$ is $\Delta$.
\end{theorem}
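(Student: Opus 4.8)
The plan is to exhibit $\Delta$ as the restriction to $S(C(K,H))$ of a surjective real-linear isometry built from its action on the dual. First I would reduce the theorem to the following assertion: \emph{for every $\psi\in S(Y^*)$ there is a real-linear functional $\phi_\psi$ on $C(K,H)$ with $\psi\circ\Delta=\phi_\psi|_{S(C(K,H))}$.} Granting this, one automatically has $\|\phi_\psi\|=\sup_{\|a\|=1}|\psi(\Delta(a))|\leq 1$; the assignment $\psi\mapsto\phi_\psi$ is real-linear since two functionals agreeing on the spanning set $S(C(K,H))$ coincide; and setting $T(0)=0$, $T(x)=\|x\|\,\Delta\!\left(x/\|x\|\right)$ for $x\neq 0$, one gets $\psi(T(x))=\phi_\psi(x)$ for all $x\in C(K,H)$ and all $\psi\in S(Y^*)$, whence $\psi(T(x+y))=\phi_\psi(x)+\phi_\psi(y)=\psi(T(x)+T(y))$ and likewise for scalars. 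Thus $T$ is real-linear, isometric (because $\|T(x)\|=\|x\|$), surjective (because $\Delta$ is onto $S(Y)$) and restricts to $\Delta$ on the sphere; uniqueness of the extension is then automatic.

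To prove the reduced assertion I would first treat the ``special'' functionals. Fix $(t_0,x_0)\in K\times S(H)$ and $\psi\in\hbox{supp}_\Delta(t_0,x_0)$, which is non-empty by Lemma~\ref{l existence of support functionals for the image of a face}. By Proposition~\ref{p Delta is affine in maximal faces} the restriction $\Delta|_{A(t_0,x_0)}$ is affine, and by Proposition~\ref{p 45 more detailed} there exist $\phi_0\in C(K,H)^*_{\RR}$ with $\|\phi_0\|\leq 1$ and $\gamma_0\in\RR$ such that $\psi\Delta(a)=\phi_0(a)+\gamma_0$ for all $a\in A(t_0,x_0)$; since $\psi\Delta\equiv 1$ on $A(t_0,x_0)$, the functional $\phi_0$ is constant there. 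Next I would use that $\Delta$ is an isometry together with the distance formula of Lemma~\ref{l  17 for C(K,H)} applied to the compact set $\{t_0\}$ and $p=\chi_{\{t_0\}}$: for $a\in S(C(K,H))$ with $a(t_0)=\lambda x_0$, $\lambda\in[-1,1]$, one has $\hbox{dist}(a,A(t_0,x_0))=1-\lambda$ and $\hbox{dist}(a,-A(t_0,x_0))=1+\lambda$. Combining these with $\psi\Delta\equiv 1$ on $A(t_0,x_0)$, with $\psi\Delta\equiv-1$ on $-A(t_0,x_0)$ (see \eqref{eq post lemma support}), and with $\|\psi\|=1$, one is squeezed into $\lambda\leq\psi\Delta(a)\leq\lambda$, i.e. $\psi\Delta(a)=\lambda=\Re\hbox{e}\langle a(t_0)\,|\,x_0\rangle=(x_0^*\otimes\delta_{t_0})(a)$ whenever $a(t_0)\in\RR x_0$.

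The next step is to upgrade this identity to all of $S(C(K,H))$, and here I would invoke Lemma~\ref{l PeSta generalized}, the real JB$^*$-triple input: after normalising the data $(\phi_0,\gamma_0)$ so that $\phi_0(b)=\|\phi_0\|$ for the elements $b\in A(t_0,x_0)$ supported on a small neighbourhood of $t_0$ --- which is exactly where the information on the $\RR x_0$-slices, combined with the triple-theoretic control of $\phi_0$ coming out of Proposition~\ref{p 45 more detailed}, enters --- Lemma~\ref{l PeSta generalized} forces $\phi_0=\|\phi_0\|\,(x_0^*\otimes\delta_{t_0})$ and $\gamma_0=0$, so $\psi\circ\Delta=(x_0^*\otimes\delta_{t_0})|_{S(C(K,H))}$ on the whole sphere. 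This proves the reduced assertion for every $\psi$ in $\bigcup_{(t,z)}\hbox{supp}_\Delta(t,z)$. To reach an arbitrary $\psi\in S(Y^*)$ I would observe that every point of $S(C(K,H))$ lies in some maximal convex subset $A(t,z)$ and that $\Delta$ is onto $S(Y)$, so the symmetric set $\bigcup_{(t,z)}\hbox{supp}_\Delta(t,z)$ is $1$-norming and hence its weak$^*$-closed convex hull is $\mathcal{B}_{Y^*}$; for a finite convex combination $\psi=\sum_i c_i\psi_i$ of such functionals one takes $\phi_\psi=\sum_i c_i\,(z_i^*\otimes\delta_{t_i})$, and for a weak$^*$-limit of such combinations one takes a weak$^*$-cluster point of the associated uniformly bounded functionals (legitimate since $\psi_\alpha(\Delta(a))\to\psi(\Delta(a))$ for each fixed $a$). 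This produces $\phi_\psi$ for every $\psi$ and finishes the argument.

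The hard part is precisely this upgrade from the $\RR x_0$-slices to the whole sphere: the elementary distance estimates only trap $\psi\Delta(a)$ in a non-degenerate interval as soon as $a(t_0)\notin\RR x_0$, so one genuinely needs the facial/tripotent machinery developed in Section~\ref{sec: facial real} and distilled into Lemma~\ref{l PeSta generalized}; checking the hypothesis of that lemma --- i.e. controlling the constant $\gamma_0$ and the norm of the functional $\phi_0$ delivered by Proposition~\ref{p 45 more detailed}, and thereby ruling out a nontrivial affine constant --- is the delicate heart of the proof. The remaining bookkeeping, namely that the various faces $\hbox{supp}_\Delta(t,z)$ are mutually compatible and jointly norming, is routine given Lemma~\ref{l existence of support functionals for the image of a face} and the description of the maximal convex subsets of $S(C(K,H))$.
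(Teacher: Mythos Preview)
Your argument has a genuine gap at exactly the step you identify as the ``delicate heart of the proof,'' and the gap is not repairable along the lines you sketch.

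You apply Proposition~\ref{p 45 more detailed} to the face $A(t_0,x_0)$ itself, obtaining $\phi_0\in C(K,H)^*_{\RR}$ and $\gamma_0\in\RR$ with $\psi\Delta(a)=\phi_0(a)+\gamma_0$ \emph{for all $a\in A(t_0,x_0)$}. But since $\psi\in\hbox{supp}_\Delta(t_0,x_0)$ you already know $\psi\Delta\equiv 1$ on $A(t_0,x_0)$, so this identity merely says $\phi_0$ is constant on $A(t_0,x_0)$ with value $1-\gamma_0$; it carries no new information. Even if you could show $\|\phi_0\|=1-\gamma_0$ and invoke Lemma~\ref{l PeSta generalized} to conclude $\phi_0=(1-\gamma_0)(x_0^*\otimes\delta_{t_0})$ and $\gamma_0=0$, the relation $\psi\Delta=\phi_0+\gamma_0$ was \emph{only} established on $A(t_0,x_0)$, so you cannot conclude $\psi\circ\Delta=(x_0^*\otimes\delta_{t_0})$ on the whole sphere. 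Your squeeze argument via Lemma~\ref{l  17 for C(K,H)} correctly gives $\psi\Delta(a)=\Re\hbox{e}\langle a(t_0)\,|\,x_0\rangle$ when $a(t_0)\in\RR x_0$, but Lemma~\ref{l PeSta generalized} is a statement about a \emph{linear} functional, and $\psi\circ\Delta$ is not linear; you have no linear functional $\phi$ known to agree with $\psi\Delta$ outside $A(t_0,x_0)$.

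The paper's fix is to apply Proposition~\ref{p 45 more detailed} to a \emph{different} face $A(t_1,u(t_1))$ with $t_1\neq t_0$, obtaining $\phi,\gamma$ with $\psi\Delta(a)=\phi(a)+\gamma$ for all $a\in A(t_1,u(t_1))$. One then chooses bump functions $f,g$ with disjoint supports near $t_0$ and $t_1$ so that $gu\pm fb\in A(t_1,u(t_1))\cap(\pm A(t_0,x_0))$ for every $b\in A(t_0,x_0)$; evaluating $\psi\Delta$ on these gives $\pm 1=\pm\phi(fb)+\phi(gu)+\gamma$, whence $\gamma+\phi(gu)=0$ and $\phi(fb)=1$ for every such $b$. \emph{Now} Lemma~\ref{l PeSta generalized} applies to $\phi$ and yields $\phi=x_0^*\otimes\delta_{t_0}$; since $u$ itself lies in $A(t_1,u(t_1))$, one gets $\psi\Delta(u)=\phi(u)+\gamma=\Re\hbox{e}\langle u(t_0)\,|\,x_0\rangle$. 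This establishes the identity on all extreme points, and Corollary~\ref{c convex combinations of unitaries in maximal faces} together with affinity on each maximal face (Proposition~\ref{p Delta is affine in maximal faces}) propagates it to the whole sphere. The trick of passing through a \emph{second} face is what your argument is missing. Finally, your detour through arbitrary $\psi\in S(Y^*)$ via weak$^*$-limits is unnecessary: once the identity holds for all $\psi\in\bigcup_{(t,z)}\hbox{supp}_\Delta(t,z)$, the norming criterion of \cite[Lemma~6]{MoriOza2018} applies directly.
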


\begin{proof} Let us fix $t_0\in K,$ $x_0\in S(H)$, and $\psi\in \hbox{supp}_\Delta(t_0,x_0)$ (cf. Lemma \ref{l existence of support functionals for the image of a face}). We first observe that if $K=\{t_0\}$, then $C(K,H)$ is isometrically isomorphic to $H$, and thus the desired conclusion follows, for example, from \cite[Proposition 4.15]{BeCuFerPe2018}.\smallskip

We claim that \begin{equation}\label{eq coincidence on extreme points} \psi \Delta (u) = \Re\hbox{e} \langle u(t_0) | x_0\rangle, \hbox{ for all } u \in \partial_e(\mathcal{B}_{C(K,H)}).
\end{equation}

Let us take $t_1\in K\backslash \{t_0\}$ and open neighborhoods $\mathcal{O}_1$, $\mathcal{O}_2$ and $\mathcal{O}_3$ such that $\overline{\mathcal{O}_1}\subset \mathcal{O}_2$, $t_0\in \mathcal{O}_1$, $t_1\in \mathcal{O}_3$, and $\mathcal{O}_2\cap \mathcal{O}_3$. Let $f,g\in C(K)$ whose cozero-sets are contained in $\mathcal{O}_2$ and $\mathcal{O}_3$, respectively, $f(t_0)=1$ and $g(t_1)=1$.  Given $u\in \partial_e(\mathcal{B}_{C(K,H)}),$ Proposition \ref{p 45 more detailed}, applied to the face $A(t_1, u(t_1))$ and $\psi$, implies the existence of a functional $\phi\in \mathcal{B}_{C(K,H)^*_{\mathbb{R}}}$ and a real $\gamma$ satisfying \begin{equation}\label{eq aplication p 46 funcitonals} \psi\Delta (a) = \phi (a) +\gamma, \hbox{ for all } a\in A(t_1,u(t_1)).
\end{equation}

For each $b\in A(t_0,x_0)$, the elements $g u \pm f b\in A(t_0,x_0)$ belong to $A(t_1,u(t_1))$ and to $\pm A(t_0,x_0)$. Combining \eqref{eq aplication p 46 funcitonals}, Lemma \ref{l existence of support functionals for the image of a face} and \eqref{eq post lemma support}, we get $$\pm 1 = \psi \Delta (g u \pm f b) = \pm \phi ( f b) + \phi (g u )+ \gamma = \pm \phi ( f b) + \psi\Delta (g u ).$$ We therefore deduce that $\psi\Delta (g u ) = 0$ and $\phi ( f b)=1$ for every $b\in A(t_0,x_0)$ and every $f$ as above. In particular, $\phi ( f b)=1$ for every $b\in A(t_0,x_0)$ whose cozero-set is contained in $\mathcal{O}_1$. Lemma \ref{l PeSta generalized} assures that $\phi (a) =  \Re\hbox{e} \langle a(t_0) | x_0\rangle $, for all $a\in C(K,\mathcal{H})$. Since $u\in A(t_1,u(t_1))$, \eqref{eq aplication p 46 funcitonals} implies that $\phi (u) =  \Re\hbox{e} \langle u(t_0) | x_0\rangle, $ which finishes the proof of the claim in \eqref{eq coincidence on extreme points}.\smallskip

Now, Corollary \ref{c convex combinations of unitaries in maximal faces} combined with \eqref{eq coincidence on extreme points} and the final conclusion in Proposition \ref{p Delta is affine in maximal faces} prove that $\psi \Delta (a) = \Re\hbox{e} \langle a(t_0)| x_0\rangle,$ for all $a$ in a maximal face of the form $A(t_2,x_2)$ with $t_2\in K, x_2\in S(H).$ Since every $a\in S(C(K,H))$ belongs to a maximal face of the form $A(t_2,x_2)$ with $t_2\in K, x_2\in S(H),$ we conclude that \begin{equation}\label{eq coincidence on the whole sphere 2502} \psi \Delta (a) = \Re\hbox{e} \langle a(t_0)| x_0\rangle, \hbox{ for all } a\in S(C(K,H)).
\end{equation}

Finally, we consider the families $\{ \Re\hbox{e} x_0^*\otimes \delta_{t_0} : t_0\in K, x_0\in S(H)\}\subseteq S(C(K,H)_{\mathbb{R}}^*)$ and $\{ \psi : \psi\in \hbox{supp}_{\Delta}(t_0,x_0), t_0\in K, x_0\in S(H)\}\subseteq S(Y^*)$. Since the first family is norming for $C(K,H)$, the desired conclusion  follows from \eqref{eq coincidence on the whole sphere 2502} and \cite[Lemma 6]{MoriOza2018} (alternatively, \cite[Lemma 2.1]{FangWang06}).
\end{proof}

The conclusion of Theorem \ref{t MazurUlam property for C(K,h)} in the case $H= \mathbb{C}$ is a consequence of \cite[Theorem 1]{MoriOza2018}. The case in which $H$ is a real Hilbert spaces is not fully covered by our theorem. R. Liu proved in \cite[Corollary 6]{Liu2007} that $C(K,\mathbb{R})$ satisfies the Mazur--Ulam property whenever $K$ is a compact Hausdorff space. Let $\mathcal{H} = \ell_2(\Gamma,\mathbb{R})$ be a real Hilbert space with inner product $(\cdot|\cdot)$. Suppose dim$(\mathcal{H})$ is even or infinite. We can write $\Gamma$ as the disjoint union of two subsets $\Gamma_1,\Gamma_2$ for which there exists a bijection $\sigma: \Gamma_1\to \Gamma_2$. Let $H=\ell_2(\Gamma_1)$ denote the usual complex Hilbert space with inner product $\langle\cdot | \cdot\rangle$, and $(H_{\mathbb{R}},\Re\hbox{e} \langle \cdot | \cdot \rangle )$ the underlying real Hilbert space. The mapping $\displaystyle \left(\lambda_j \right)_{j\in\Gamma_1}+ \left(\lambda_{\sigma(j)}\right)_{j\in\Gamma_1} \mapsto \left(\lambda_j +i \lambda_{\sigma(j)}\right)_{j\in\Gamma_1}$ is a surjective real linear isometry from $\mathcal{H}$ onto $H_{\mathbb{R}}$. The next result is a straightforward consequence of our previous Theorem \ref{t MazurUlam property for C(K,h)}.\smallskip

\begin{corollary}\label{c Mazur-Ulam for real Hilber even or infinite dimensional}
Let $K$ be a compact Hausdorff space and let $\mathcal{H}$ be a real Hilbert space with dim$(\mathcal{H})$ even or infinite. Then the real Banach space $C(K,\mathcal{H})$ satisfies the Mazur--Ulam property.%, that is, for each surjective isometry $\Delta : S(C(K,\mathcal{H}))\to S(Y)$, where $Y$ is a real Banach space, there exists a surjective real linear isometry from $C(K,\mathcal{H})$ onto $Y$ whose restriction to $S(C(K,\mathcal{H}))$ is $\Delta$.
\end{corollary}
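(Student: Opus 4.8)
The plan is to deduce the statement directly from Theorem \ref{t MazurUlam property for C(K,h)} by means of a real-linear isometric identification, so that no new analysis of unit spheres or faces is required.

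First I would record the routine observation that the Mazur--Ulam property is an invariant of the real Banach space structure: if $\Phi\colon X_1\to X_2$ is a surjective real linear isometry between real Banach spaces and $X_2$ satisfies the Mazur--Ulam property, then so does $X_1$. Indeed, for any real Banach space $Y$ and any surjective isometry $\Delta\colon S(X_1)\to S(Y)$, the composition $\Delta\circ\bigl(\Phi|_{S(X_1)}\bigr)^{-1}\colon S(X_2)\to S(Y)$ is again a surjective isometry; extending it to a surjective real linear isometry $T\colon X_2\to Y$ and composing, $T\circ\Phi\colon X_1\to Y$ is a surjective real linear isometry whose restriction to $S(X_1)$ is $\Delta$.

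Next I would build the identification. We may assume $\dim(\mathcal H)\geq 2$ (the case $\dim(\mathcal H)=0$ being trivial, as then $C(K,\mathcal H)=\{0\}$); write $\mathcal H=\ell_2(\Gamma,\mathbb R)$. Since $\dim(\mathcal H)$ is even or infinite, $\Gamma$ is the disjoint union of two subsets $\Gamma_1,\Gamma_2$ admitting a bijection $\sigma\colon\Gamma_1\to\Gamma_2$, and, exactly as in the paragraph preceding the corollary, the assignment $\bigl(\lambda_j\bigr)_{j\in\Gamma_1}+\bigl(\lambda_{\sigma(j)}\bigr)_{j\in\Gamma_1}\mapsto\bigl(\lambda_j+i\lambda_{\sigma(j)}\bigr)_{j\in\Gamma_1}$ is a surjective real linear isometry $U\colon\mathcal H\to H_{\mathbb R}$, where $H=\ell_2(\Gamma_1,\mathbb C)$. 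Then $\widetilde U\colon C(K,\mathcal H)\to C(K,H_{\mathbb R})$ given by $\widetilde U(a)(t):=U(a(t))$ is real linear and bijective (its inverse is composition with $U^{-1}$, and $U\circ a$ is automatically continuous), and it is isometric because $\|\widetilde U(a)\|_\infty=\sup_{t\in K}\|U(a(t))\|=\sup_{t\in K}\|a(t)\|=\|a\|_\infty$. Since the norms of $C(K,H)$ and $C(K,H_{\mathbb R})$ coincide, $\widetilde U$ is a surjective real linear isometry from $C(K,\mathcal H)$ onto $C(K,H)$, the latter regarded as a real Banach space.

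Finally, $C(K,H)$ satisfies the Mazur--Ulam property by Theorem \ref{t MazurUlam property for C(K,h)}; in the degenerate case $\dim_{\mathbb C}(H)=1$ (that is, $\dim(\mathcal H)=2$) one uses instead the remark following that theorem, namely that $C(K,\mathbb C)=C(K)$ is a unital commutative C$^*$-algebra and hence \cite[Theorem 1]{MoriOza2018} applies. Invoking the first paragraph with $\Phi=\widetilde U$ then gives the Mazur--Ulam property for $C(K,\mathcal H)$. There is essentially no obstacle here: the only point needing a word of care is the low-dimensional case $\dim(\mathcal H)=2$ just discussed, which is why it is cleanest to cite Theorem \ref{t MazurUlam property for C(K,h)} together with the accompanying remark rather than the theorem in isolation.
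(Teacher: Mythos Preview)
Your proposal is correct and follows precisely the route the paper intends: the paragraph preceding the corollary builds the real linear isometry $\mathcal{H}\to H_{\mathbb R}$, and the paper then declares the corollary a ``straightforward consequence'' of Theorem \ref{t MazurUlam property for C(K,h)} without writing out a formal proof. You have simply supplied the details of that straightforward deduction, including the careful handling of the case $\dim(\mathcal{H})=2$ via the remark that $C(K,\mathbb C)$ is covered by \cite[Theorem 1]{MoriOza2018}.
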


There are certain obstacles that prevent to apply the tools developed in Proposition \ref{p N strong Mankiewicz property}, and Lemma \ref{l  17 for C(K,H)} in the case of $C(K,\mathcal{H})$ when $\mathcal{H}$ is a finite-dimensional real Hilbert space with odd dimension. The difficulties in Proposition \ref{p N strong Mankiewicz property} can be solved with Proposition \ref{p N strong Mankiewicz property rel Hilbert spaces}. If in the proof of Lemma \ref{l  17 for C(K,H)}, Theorem \ref{t characterization relatively open weak* closed faces real case} replaces \cite[Theorem 3.6]{BeCuFerPe2018} then the same conclusion holds for real Hilbert spaces. It is a bit more laborious, but no more than a routine exercise, to check that the arguments in Propositions \ref{p Delta is affine in maximal faces} and \ref{p 45 more detailed} and Theorem \ref{t MazurUlam property for C(K,h)} are literally valid to get the following result.

\begin{corollary}\label{c Mazur-Ulam for real Hilber odd finite dimensional}
Let $K$ be a compact Hausdorff space and let $\mathcal{H}$ be a finite-dimen-sional real Hilbert space with odd dimension. Then the real Banach space $C(K,\mathcal{H})$ satisfies the Mazur--Ulam property.%, that is, for each surjective isometry $\Delta : S(C(K,\mathcal{H}))\to S(Y)$, where $Y$ is a real Banach space, there exists a surjective real linear isometry from $C(K,\mathcal{H})$ onto $Y$ whose restriction to $S(C(K,\mathcal{H}))$ is $\Delta$.
\end{corollary}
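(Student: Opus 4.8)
The plan is to reproduce, verbatim where possible, the chain of arguments leading to Theorem \ref{t MazurUlam property for C(K,h)}, replacing at each point the complex ingredient by its real counterpart, all of which have already been prepared in Sections \ref{sec: facial real} and \ref{sec: 3}. Fix a surjective isometry $\Delta:S(C(K,\mathcal{H}))\to S(Y)$ with $Y$ a real Banach space. If $\dim(\mathcal{H})=1$ the statement is \cite[Corollary 6]{Liu2007}, and if $K$ is a singleton then $C(K,\mathcal{H})\cong\mathcal{H}$ and the conclusion follows from \cite[Proposition 4.15]{BeCuFerPe2018}; so we may assume $\dim(\mathcal{H})\geq 3$ odd and $K$ not a single point, and throughout write $E=C(K,\mathcal{H})$.

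The first step is the real analogue of Lemma \ref{l  17 for C(K,H)}: for a closed $\mathcal{C}\subseteq K$, $p=\chi_{_\mathcal{C}}$ and $x_0\in S(\mathcal{H})$, one has $F(x_0\otimes p,\lambda)=(F_{x_0\otimes p})_{\lambda}$. The inclusion $(\supseteq)$ is literally the same estimate. For $(\subseteq)$, since $\dim(\mathcal{H})<\infty$ the weak and norm topologies on $\mathcal{H}$ coincide, so $E^{**}\equiv C(\tilde K,\mathcal{H})$ with $C(K)^{**}\equiv C(\tilde K)$ (as in the complex case, via \cite[Theorem 2]{CamGreim82}), and $e=x_0\otimes p$ is a compact tripotent of $E^{**}$; invoking Theorem \ref{t characterization relatively open weak* closed faces real case} in place of \cite[Theorem 3.6]{BeCuFerPe2018} identifies $\overline{F_{x_0\otimes p}}^{w^*}$ with $e+\mathcal{B}_{E^{**}_0(e)}$, and the Hahn--Banach separation computation giving $\mathrm{dist}(a,F_{x_0\otimes p})\leq 1-\lambda$ and $\mathrm{dist}(a,-F_{x_0\otimes p})\leq 1+\lambda$ goes through unchanged.

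Next I would carry over Proposition \ref{p Delta is affine in maximal faces}. For fixed $t_0\in K$, $x_0\in S(\mathcal{H})$, the module-approximate units $(e_\lambda),(f_\lambda)$ for $L=\{b\in E:b(t_0)=0\}$ and the affine contractions $\theta_\lambda(c)=x_0\otimes(1-e_\lambda)+ce_\lambda$ are constructed exactly as before, with $\theta_\lambda\to\mathrm{Id}$ in the point-norm topology and $\theta_\lambda(A(t_0,x_0))\subseteq F_{x_0\otimes p_\lambda}$. In Case~1 ($p_\lambda\in C(K)$), $\mathcal{R}_\lambda=(x_0\otimes p_\lambda)+\mathcal{B}_{(1-p_\lambda)E}$ satisfies the strong Mankiewicz property by Proposition \ref{p conditions on C(K,H) to satisfy the Strong Mankiewicz property}, valid for real $\mathcal{H}$ with $\dim(\mathcal{H})\geq 2$. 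In Case~2 ($p_\lambda\notin C(K)$, so $\overline{\mathcal{O}_\lambda}\cap(K\setminus\mathcal{O}_\lambda)\neq\emptyset$) I would replace the complex $N^\varphi_\RR$ by the real JB$^*$-subtriple $N=\{a\in E:ap_\lambda=\mu x_0\otimes p_\lambda,\ \mu\in\RR\}$ and apply Proposition \ref{p N strong Mankiewicz property rel Hilbert spaces} to get that $\mathcal{R}_\lambda=(x_0\otimes(1-f_\lambda))+f_\lambda\mathcal{B}_{N}$ has the strong Mankiewicz property; the verifications $\theta_\lambda(A(t_0,x_0))\subseteq\mathcal{R}_\lambda$ and convexity of $\Delta(\mathcal{R}_\lambda)$ — via the sets $\mathcal{R}(\gamma)$, the functions $g_m$, the projections $p_m^i$, and \cite[Lemmas 10 and 11]{MoriOza2018} — are identical once the real version of Lemma \ref{l  17 for C(K,H)} is available. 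Hence $\Delta|_{A(t_0,x_0)}$ is affine. The diagram chase of Proposition \ref{p 45 more detailed} then transfers word for word, using Propositions \ref{p conditions on C(K,H) to satisfy the Strong Mankiewicz property} and \ref{p N strong Mankiewicz property rel Hilbert spaces}, Theorem \ref{t Mori-Ozawa strong Mankiewicz}, and weak$^*$-compactness of $\mathcal{B}_{E^*}$, to produce for each $\psi\in Y^*$ a $\phi_0\in E^*$ and $\gamma_0\in\RR$ with $\|\phi_0\|\leq\|\psi\|$ and $\psi\Delta(a)=\phi_0(a)+\gamma_0$ for all $a\in A(t_0,x_0)$.

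Finally I would run the proof of Theorem \ref{t MazurUlam property for C(K,h)} in the real setting: fix $t_0\in K$, $x_0\in S(\mathcal{H})$, and $\psi\in\mathrm{supp}_\Delta(t_0,x_0)$ (Lemma \ref{l existence of support functionals for the image of a face}). For each $u\in\partial_e(\mathcal{B}_E)$ apply the real Proposition \ref{p 45 more detailed} to the face $A(t_1,u(t_1))$ for some $t_1\neq t_0$; using that $gu\pm fb\in A(t_1,u(t_1))\cap(\pm A(t_0,x_0))$ for suitable cut-off functions $f,g$, together with Lemma \ref{l existence of support functionals for the image of a face} and \eqref{eq post lemma support}, and then Lemma \ref{l PeSta generalized} (already stated for real Hilbert spaces), one obtains $\psi\Delta(u)=(u(t_0)|x_0)$. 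Corollary \ref{c convex combinations of unitaries in maximal faces real} together with the affineness of $\Delta$ on maximal faces upgrades this to $\psi\Delta(a)=(a(t_0)|x_0)$ for every $a\in S(E)$, and since the family $\{x_0^*\otimes\delta_{t_0}:t_0\in K,\ x_0\in S(\mathcal{H})\}$ is norming for $C(K,\mathcal{H})$, \cite[Lemma 6]{MoriOza2018} yields the required surjective real linear isometric extension of $\Delta$. The only genuinely new inputs are the facial-structure results of Section \ref{sec: facial real} (feeding the real Lemma \ref{l  17 for C(K,H)}) and the real strong Mankiewicz statement Proposition \ref{p N strong Mankiewicz property rel Hilbert spaces}; with these in hand there is no conceptual obstacle left, and the bulk of the work is the bookkeeping of Case~2 of Proposition \ref{p Delta is affine in maximal faces}, where the inner-ideal and triple-homomorphism machinery is used most heavily and must be checked to survive the passage to the real scalar field.
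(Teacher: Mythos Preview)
Your proposal is correct and follows exactly the route the paper itself indicates in the paragraph preceding the corollary: replace \cite[Theorem 3.6]{BeCuFerPe2018} by Theorem \ref{t characterization relatively open weak* closed faces real case} in Lemma \ref{l  17 for C(K,H)}, replace Proposition \ref{p N strong Mankiewicz property} by Proposition \ref{p N strong Mankiewicz property rel Hilbert spaces}, use Corollary \ref{c convex combinations of unitaries in maximal faces real} in place of Corollary \ref{c convex combinations of unitaries in maximal faces}, and then rerun Propositions \ref{p Delta is affine in maximal faces} and \ref{p 45 more detailed} and Theorem \ref{t MazurUlam property for C(K,h)} verbatim. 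You have in fact supplied more detail than the paper, which simply declares these adaptations a ``routine exercise''.
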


The pioneer achievements of  M. Jerison provide generalized versions of the Banach--Stone theorem for spaces of vector-valued continuous functions. Combining Theorem \ref{t MazurUlam property for C(K,h)} and Corollary \ref{c Mazur-Ulam for real Hilber even or infinite dimensional} with the Banach--Stone theorem in \cite[Theorem 7.2.16]{FleJam08} (see also \cite[Definition 7.1.2]{FleJam08}) we obtain next a description of the surjective isometries between the unit spheres of two $C(K,H)$ type spaces.

\begin{corollary}\label{c Jersion sphere} Let $K_1,K_2$ be two compact Hausdorff spaces, let $H$ be a real or complex Hilbert space, and let $Y$ be a strictly convex real Banach space. Suppose $\Delta : S(C(K_1,H))\to S(C(K_2,Y))$ is a surjective isometry. Then there exist a homeomorphism $h:K_2\to K_1$ and a mapping which maps each $t\in K_2$ to a surjective linear isometry $V(t):H\to Y$,
which is continuous from $K_2$ into the space $B(H, Y )$ of bounded linear operators from $H$ to $Y$ with the strong operator topology, such that $$\Delta(a)(t)= V(t) (a(h(t))), \ $$ for all $a\in S(C(K_1,H)), t\in K_2.$ $\hfill \Box$
\end{corollary}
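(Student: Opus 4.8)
The plan is to extend $\Delta$ to a linear isometry between the whole spaces and then identify its form by means of a vector-valued Banach--Stone theorem.

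First I would observe that $C(K_2,Y)$ is a real Banach space, so that the Mazur--Ulam property of $C(K_1,H)$ is applicable with $C(K_2,Y)$ as target. This property is available for $C(K_1,H)$ in all the cases covered by the statement: Theorem~\ref{t MazurUlam property for C(K,h)} handles complex $H$ with $\dim(H)\geq 2$, Corollaries~\ref{c Mazur-Ulam for real Hilber even or infinite dimensional} and~\ref{c Mazur-Ulam for real Hilber odd finite dimensional} handle real $H$ with $\dim(H)\geq 2$, while the one-dimensional cases $H=\CC$ and $H=\RR$ are covered by \cite[Theorem~1]{MoriOza2018} and \cite[Corollary~6]{Liu2007}, respectively. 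Consequently $\Delta$ admits an extension to a surjective real linear isometry $T:C(K_1,H)\to C(K_2,Y)$ with $T|_{S(C(K_1,H))}=\Delta$.

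Next I would apply Jerison's vector-valued Banach--Stone theorem to $T$. Viewing $H$ and $Y$ as real Banach spaces, $H$ is strictly convex because it is a Hilbert space and $Y$ is strictly convex by hypothesis, so \cite[Theorem~7.2.16]{FleJam08} (with the notion of canonical isometry from \cite[Definition~7.1.2]{FleJam08}) applies to the surjective linear isometry $T$ and produces a homeomorphism $h:K_2\to K_1$ together with a strong-operator-topology continuous map $t\mapsto V(t)$ from $K_2$ into $B(H,Y)$ such that each $V(t)$ is a surjective real linear isometry of $H$ onto $Y$ and $T(a)(t)=V(t)(a(h(t)))$ for all $a\in C(K_1,H)$ and all $t\in K_2$. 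Restricting to $S(C(K_1,H))$ and using $T|_{S(C(K_1,H))}=\Delta$ then gives the desired conclusion.

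The main obstacle I anticipate lies in checking that the hypotheses of the cited Banach--Stone theorem are genuinely met, rather than in any new computation. Two points deserve care: $\Delta$, and hence $T$, is only real linear even when $H$ carries a complex structure, so one must run the argument with the underlying real spaces throughout, noting that both value spaces remain strictly convex in the real sense and that the resulting $V(t)$ are therefore real linear isometries, exactly as the statement asserts; and one must verify that strict convexity of the value space $Y$ is indeed the hypothesis forcing $T$ to be of the canonical weighted-composition type (thereby excluding the pathologies associated with a non-trivial $M$-structure or centralizer of the value space), the continuity of $h$ and of $t\mapsto V(t)$ for the strong operator topology being part of the theorem's conclusion.
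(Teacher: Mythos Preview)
Your approach is exactly that of the paper: extend $\Delta$ to a surjective real linear isometry via the Mazur--Ulam property established for $C(K_1,H)$, and then invoke the vector-valued Banach--Stone theorem \cite[Theorem 7.2.16]{FleJam08} to obtain the canonical weighted-composition form. Your write-up is in fact a bit more careful than the paper's one-line justification, since you explicitly cover the one-dimensional cases $H=\mathbb{R}$ and $H=\mathbb{C}$ and the odd finite real dimension via Corollary~\ref{c Mazur-Ulam for real Hilber odd finite dimensional}, and you flag the need to work with the underlying real structures throughout.
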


\medskip\medskip

\textbf{Acknowledgements} Authors partially supported by Junta de Andaluc\'{\i}a grant FQM375. First author also supported by Universidad de Granada, Junta de Andaluc{\'i}a and Fondo Social Europeo de la Uni{\'o}n Europea (Iniciativa de Empleo Juvenil), grant number 6087.\smallskip

\end{document}